\begin{document}  

\newcommand{\nc}{\newcommand}
\newcommand{\delete}[1]{}
\nc{\dfootnote}[1]{{}}          
\nc{\ffootnote}[1]{\dfootnote{#1}}
\nc{\mfootnote}[1]{\footnote{#1}} 
\nc{\ofootnote}[1]{\footnote{\tiny Older version: #1}} 

\nc{\mlabel}[1]{\label{#1}}  
\nc{\mcite}[1]{\cite{#1}}  
\nc{\mref}[1]{\ref{#1}}  

\nc{\mbibitem}[1]{\bibitem{#1}} 
\nc{\mkeep}[1]{\marginpar{{\bf #1}}} 

\newtheorem{theorem}{Theorem}[section]
\newtheorem{prop}[theorem]{Proposition}
\newtheorem{defn}[theorem]{Definition}
\newtheorem{lemma}[theorem]{Lemma}
\newtheorem{coro}[theorem]{Corollary}
\newtheorem{prop-def}{Proposition-Definition}[section]
\newtheorem{claim}{Claim}[section]
\newtheorem{remark}[theorem]{Remark}
\newtheorem{propprop}{Proposed Proposition}[section]
\newtheorem{conjecture}{Conjecture}
\newtheorem{exam}[theorem]{Example}
\newtheorem{assumption}{Assumption}
\newtheorem{condition}[theorem]{Assumption}
\newtheorem{question}[theorem]{Question}

\newtheorem{tempexer}{Exercise}[section]
\newtheorem{temprmk}[theorem]{Remark}

\newenvironment{exer}{\begin{tempexer}\rm}{\end{tempexer}}
\newenvironment{rmk}{\begin{temprmk}\rm}{\end{temprmk}}

\renewcommand{\labelenumi}{{\rm(\alph{enumi})}}
\renewcommand{\theenumi}{(\alph{enumi})}

\nc{\tred}[1]{\textcolor{red}{#1}}
\nc{\tblue}[1]{\textcolor{blue}{#1}}
\nc{\tgreen}[1]{\textcolor{green}{#1}}

\nc{\wvec}[2]{{\scriptsize{ [
    \begin{array}{c} #1 \\ #2 \end{array}   ]}}}

\nc{\adec}{\check{;}}
\nc{\dftimes}{\widetilde{\otimes}} \nc{\dfl}{\succ}
\nc{\dfr}{\prec} \nc{\dfc}{\circ} \nc{\dfb}{\bullet}
\nc{\dft}{\star} \nc{\dfcf}{{\mathbf k}} \nc{\spr}{\cdot}
\nc{\disp}[1]{\displaystyle{#1}}
\nc{\bin}[2]{ (_{\stackrel{\scs{#1}}{\scs{#2}}})}  
\nc{\binc}[2]{ \left (\!\! \begin{array}{c} \scs{#1}\\
    \scs{#2} \end{array}\!\! \right )}  
\nc{\bincc}[2]{  \left ( {\scs{#1} \atop
    \vspace{-.5cm}\scs{#2}} \right )}  
\nc{\sarray}[2]{\begin{array}{c}#1 \vspace{.1cm}\\ \hline
    \vspace{-.35cm} \\ #2 \end{array}}
\nc{\bs}{\bar{S}} \nc{\dcup}{\stackrel{\bullet}{\cup}}
\nc{\cprod}{\ast}
\nc{\dbigcup}{\stackrel{\bullet}{\bigcup}} \nc{\etree}{\big |}
\nc{\la}{\longrightarrow} \nc{\fe}{\'{e}} \nc{\rar}{\rightarrow}
\nc{\dar}{\downarrow} \nc{\dap}[1]{\downarrow
\rlap{$\scriptstyle{#1}$}} \nc{\uap}[1]{\uparrow
\rlap{$\scriptstyle{#1}$}} \nc{\defeq}{\stackrel{\rm def}{=}}
\nc{\dis}[1]{\displaystyle{#1}}
\nc{\dotcup}{\,\displaystyle{\bigcup^\bullet}\ }
\nc{\barot}{{\otimes}}
\nc{\sdotcup}{\tiny{\displaystyle{\bigcup^\bullet}\ }}
\nc{\hcm}{\ \hat{,}\ }
\nc{\hcirc}{\hat{\circ}} \nc{\hts}{\hat{\shpr}}
\nc{\lts}{\stackrel{\leftarrow}{\shpr}}
\nc{\rts}{\stackrel{\rightarrow}{\shpr}} \nc{\lleft}{[}
\nc{\lright}{]} \nc{\uni}[1]{\tilde{#1}} \nc{\wor}[1]{\check{#1}}
\nc{\free}[1]{\bar{#1}} \nc{\den}[1]{\check{#1}} \nc{\lrpa}{\wr}
\nc{\dprod}{\ast_P}
\nc{\gzeta}{\bar{\zeta}}
\nc{\mprod}{\pm}
\nc{\curlyl}{\left \{ \begin{array}{c} {} \\ {} \end{array}
    \right .  \!\!\!\!\!\!\!}
\nc{\curlyr}{ \!\!\!\!\!\!\!
    \left . \begin{array}{c} {} \\ {} \end{array}
    \right \} }
\nc{\leaf}{\ell}       
\nc{\longmid}{\left | \begin{array}{c} {} \\ {} \end{array}
    \right . \!\!\!\!\!\!\!}
\nc{\mult}{m}       
\nc{\msh}{\ast}
\nc{\ot}{\otimes} \nc{\sot}{{\scriptstyle{\ot}}}
\nc{\otm}{\overline{\ot}}
\nc{\ora}[1]{\stackrel{#1}{\rar}}
\nc{\ola}[1]{\stackrel{#1}{\la}}
\nc{\scs}[1]{\scriptstyle{#1}} \nc{\mrm}[1]{{\rm #1}}
\nc{\margin}[1]{\marginpar{\rm #1}}   
\nc{\dirlim}{\displaystyle{\lim_{\longrightarrow}}\,}
\nc{\invlim}{\displaystyle{\lim_{\longleftarrow}}\,}
\nc{\mvp}{\vspace{0.5cm}} \nc{\svp}{\vspace{2cm}}
\nc{\un}{u}                 
\nc{\vp}{\vspace{8cm}} \nc{\proofbegin}{\noindent{\bf Proof: }}
\nc{\proofend}{$\blacksquare$ \vspace{0.5cm}}
\nc{\sha}{{\mbox{\cyr X}}}  
\nc{\ncsha}{{\mbox{\cyr X}^{\mathrm NC}}} \nc{\ncshao}{{\mbox{\cyr X}^{\mathrm NC,\,0}}}
\nc{\shpr}{\mbox{\cyrs X}}    
\nc{\shprm}{\,\overline{\diamond}\,}    
\nc{\shpro}{\diamond^0}    
\nc{\shprr}{\diamond^r}     
\nc{\shprp}{\shpr_v}
\nc{\shprl}{\shpr_\ell}
\nc{\shprw}{\shpr_w}
\nc{\shpra}{\overline{\diamond}^r}
\nc{\shpru}{\check{\diamond}} \nc{\catpr}{\diamond_l}
\nc{\rcatpr}{\diamond_r} \nc{\lapr}{\diamond_a}
\nc{\sqcupm}{\ot}
\nc{\lepr}{\diamond_e} \nc{\vep}{\varepsilon} \nc{\labs}{\mid\!}
\nc{\rabs}{\!\mid} \nc{\hsha}{\widehat{\sha}}
\nc{\lsha}{\stackrel{\leftarrow}{\sha}}
\nc{\rsha}{\stackrel{\rightarrow}{\sha}} \nc{\lc}{\lfloor}
\nc{\rc}{\rfloor}
\nc{\lm}{\,\slash}
\nc{\rtm}{\backslash\,}
\nc{\sqmon}[1]{\langle #1\rangle}
\nc{\forest}{\calf} \nc{\ass}[1]{\alpha({#1})}
\nc{\altx}{\Lambda_X} \nc{\vecT}{\vec{T}} \nc{\onetree}{{\, \bullet\, }}
\nc{\Ao}{\check{A}}
\nc{\seta}{\underline{\Ao}}
\nc{\deltaa}{\overline{\delta}}
\nc{\trho}{\tilde{\rho}}

\nc{\mmbox}[1]{\mbox{\ #1\ }} \nc{\ann}{\mrm{ann}}
\nc{\ABD}{\mrm{Algebraic Birkhoff Decomposition}\xspace}
\nc{\Aut}{\mrm{Aut}} \nc{\can}{\mrm{can}}
\nc{\bread}{\mrm{b}}
\nc{\colim}{\mrm{colim}}
\nc{\Cont}{\mrm{Cont}} \nc{\rchar}{\mrm{char}}
\nc{\cok}{\mrm{coker}} \nc{\dtf}{{R-{\rm tf}}} \nc{\dtor}{{R-{\rm
tor}}}
\renewcommand{\det}{\mrm{det}}
\nc{\depth}{{\mrm d}}
\nc{\Div}{{\mrm Div}} \nc{\End}{\mrm{End}} \nc{\Ext}{\mrm{Ext}}
\nc{\FG}{\mrm{FG}}
\nc{\fp}{\mrm{fp}}
\nc{\Fil}{\mrm{Fil}} \nc{\Frob}{\mrm{Frob}} \nc{\Gal}{\mrm{Gal}}
\nc{\GL}{\mrm{GL}} \nc{\Hom}{\mrm{Hom}} \nc{\hsr}{\mrm{H}}
\nc{\hpol}{\mrm{HP}} \nc{\id}{\mrm{id}}
\nc{\mht}{\mrm{h}}
\nc{\im}{\mrm{im}}
\nc{\incl}{\mrm{incl}} \nc{\length}{\mrm{length}}
\nc{\LR}{\mrm{LR}} \nc{\mchar}{\rm char}
\nc{\mapped}{operated\xspace}
\nc{\Mapped}{Operated\xspace}
\nc{\NC}{\mrm{NC}}
\nc{\mpart}{\mrm{part}} \nc{\ql}{{\QQ_\ell}} \nc{\pmchar}{\partial\mchar}
\nc{\qp}{{\QQ_p}}
\nc{\rank}{\mrm{rank}} \nc{\rba}{\rm{RBA }} \nc{\rbas}{\rm{RBAs }}
\nc{\rbw}{\rm{RBW }} \nc{\rbws}{\rm{RBWs }} \nc{\rcot}{\mrm{cot}}
\nc{\rest}{\rm{controlled}\xspace}
\nc{\rdef}{\mrm{def}} \nc{\rdiv}{{\rm div}} \nc{\rtf}{{\rm tf}}
\nc{\rtor}{{\rm tor}} \nc{\res}{\mrm{res}}
\nc{\Set}{{\mathbf{Set}}}
\nc{\SL}{\mrm{SL}}
\nc{\Spec}{\mrm{Spec}} \nc{\tor}{\mrm{tor}} \nc{\Tr}{\mrm{Tr}}
\nc{\mtr}{\mrm{sk}}

\nc{\ab}{\mathbf{Ab}} \nc{\Alg}{\mathbf{Alg}}
\nc{\Algo}{\mathbf{Alg}^0} \nc{\Bax}{\mathbf{Bax}}
\nc{\Baxo}{\mathbf{Bax}^0} \nc{\RB}{\mathbf{RB}}
\nc{\RBo}{\mathbf{RB}^0} \nc{\BRB}{\mathbf{RB}}
\nc{\Dend}{\mathbf{DD}} \nc{\bfk}{{\bf k}} \nc{\bfone}{{\bf 1}}
\nc{\base}[1]{{a_{#1}}} \nc{\detail}{\marginpar{\bf More detail}
    \noindent{\bf Need more detail!}
    \svp}
\nc{\Diff}{\mathbf{Diff}} \nc{\gap}{\marginpar{\bf
Incomplete}\noindent{\bf Incomplete!!}
    \svp}
\nc{\FMod}{\mathbf{FMod}} \nc{\mset}{\mathbf{MSet}}
\nc{\rb}{\mathrm{RB}} \nc{\Int}{\mathbf{Int}}
\nc{\Mon}{\mathbf{Mon}}
\nc{\Map}{\mathrm{Map}}
\nc{\mzv}{{\mathrm{MZV}}}
\nc{\remarks}{\noindent{\bf Remarks: }} \nc{\Rep}{\mathbf{Rep}}
\nc{\Rings}{\mathbf{Rings}} \nc{\Sets}{\mathbf{Sets}}
\nc{\DT}{\mathbf{DT}}

\nc{\BA}{{\mathbb A}} \nc{\CC}{{\mathbb C}} \nc{\DD}{{\mathbb D}}
\nc{\EE}{{\mathbb E}} \nc{\FF}{{\mathbb F}} \nc{\GG}{{\mathbb G}}
\nc{\HH}{{\mathbb H}} \nc{\LL}{{\mathbb L}} \nc{\NN}{{\mathbb N}}
\nc{\QQ}{{\mathbb Q}} \nc{\RR}{{\mathbb R}} \nc{\TT}{{\mathbb T}}
\nc{\VV}{{\mathbb V}} \nc{\ZZ}{{\mathbb Z}}


\nc{\cala}{{\mathcal A}} \nc{\calc}{{\mathcal C}}
\nc{\cald}{{\mathcal D}} \nc{\cale}{{\mathcal E}}
\nc{\calf}{{\mathcal F}} \nc{\calfr}{{{\mathcal F}^{\,r}}}
\nc{\calfo}{{\mathcal F}^0} \nc{\calfro}{{\mathcal F}^{\,r,0}}
\nc{\oF}{{\overline{F}}}  \nc{\calg}{{\mathcal G}}
\nc{\calh}{{\mathcal H}} \nc{\cali}{{\mathcal I}}
\nc{\calj}{{\mathcal J}} \nc{\call}{{\mathcal L}}
\nc{\calm}{{\mathcal M}}
\nc{\oM}{\overline{M}}
\nc{\caln}{{\mathcal N}}
\nc{\calo}{{\mathcal O}} \nc{\calp}{{\mathcal P}}
\nc{\calr}{{\mathcal R}} \nc{\calt}{{\mathcal T}}
\nc{\caltr}{{\mathcal T}^{\,r}}
\nc{\calu}{{\mathcal U}} \nc{\calv}{{\mathcal V}}
\nc{\calw}{{\mathcal W}} \nc{\calx}{{\mathcal X}}
\nc{\CA}{\mathcal{A}}

\nc{\fraka}{{\mathfrak a}} \nc{\frakB}{{\mathfrak B}}
\nc{\frakb}{{\mathfrak b}} \nc{\frakd}{{\mathfrak d}}
\nc{\oD}{\overline{D}}
\nc{\frakD}{{\mathcal D}}
\nc{\frakF}{{\mathfrak F}} \nc{\frakg}{{\mathfrak g}}
\nc{\frakI}{{\mathcal I}}
\nc{\frakL}{{\mathcal L}}
\nc{\frakm}{{\mathfrak m}}
\nc{\ofrakm}{\bar{\frakm}}
\nc{\frakM}{{\mathcal M}}
\nc{\frakMo}{{\mathfrak M}^0} \nc{\frakp}{{\mathfrak p}}
\nc{\frakP}{{\mathcal P}}
\nc{\frakR}{{\mathcal R}}
\nc{\frakS}{{\mathcal S}} \nc{\frakSo}{{\mathfrak S}^0}
\nc{\fraks}{{\mathfrak s}} \nc{\os}{\overline{\fraks}}
\nc{\frakT}{{\mathfrak T}}
\nc{\oT}{\overline{T}}
\nc{\frakV}{{\mathcal V}}
\nc{\oV}{\overline{V}}
\nc{\frakW}{{\mathcal W}}
\nc{\frakw}{{\mathfrak w}}
\nc{\oW}{\overline{W}}
\nc{\frakX}{{\mathfrak X}} \nc{\frakXo}{{\mathfrak X}^0}
\nc{\frakx}{{\mathbf x}}
\nc{\frakTx}{\frakT}      
\nc{\frakTa}{\frakT^a}        
\nc{\frakTxo}{\frakTx^0}   
\nc{\caltao}{\calt^{a,0}}   
\nc{\ox}{\overline{\frakx}} \nc{\fraky}{{\mathfrak y}}
\nc{\frakz}{{\mathfrak z}} \nc{\oX}{\overline{X}}

\font\cyr=wncyr10
\font\cyrs=wncyr6
\nc{\redtext}[1]{\textcolor{red}{#1}}

\def\s1tree{\!\!\includegraphics[scale=0.41]{1tree.eps}}
\def\sa2tree{\!\!\includegraphics[scale=0.41]{2tree.eps}}
\def\sb3tree{\!\!\includegraphics[scale=0.41]{3tree.eps}}
\def\sc4tree{\!\!\includegraphics[scale=0.41]{4tree.eps}}

\def\1tree{\!\!\includegraphics[scale=0.51]{1tree.eps}}
\def\2tree{\!\!\includegraphics[scale=0.51]{2tree.eps}}
\def\3tree{\!\!\includegraphics[scale=0.51]{3tree.eps}}
\def\4tree{\!\!\includegraphics[scale=0.51]{4tree.eps}}
\def\5tree{\!\!\includegraphics[scale=0.51]{5tree.eps}}
\def\6tree{\!\!\includegraphics[scale=0.51]{6tree.eps}}
\def\7tree{\!\!\includegraphics[scale=0.51]{7tree.eps}}
\def\8tree{\!\!\includegraphics[scale=0.51]{8tree.eps}}
\def\9tree{\!\!\includegraphics[scale=0.51]{9tree.eps}}
\def\a1tree{\!\!\includegraphics[scale=0.51]{10tree.eps}}
\def\b1tree{\!\!\includegraphics[scale=0.51]{11tree.eps}}
\def\c1tree{\!\!\includegraphics[scale=0.51]{12tree.eps}}
\def\d1tree{\!\!\includegraphics[scale=0.51]{13tree.eps}}
\def\e1tree{\!\!\includegraphics[scale=0.51]{14tree.eps}}
\def\f1tree{\!\!\includegraphics[scale=0.51]{15tree.eps}}
\def\dec1tree{\!\!\includegraphics[scale=0.51]{dectree1.eps}}
\def\xtree{\!\!\includegraphics[scale=0.41]{xtree.eps}}
\def\xyztree{\!\!\includegraphics[scale=0.41]{xyztree.eps}}


\def\ccxtree{\includegraphics[scale=1]{ccxtree}}
\def\ccxtreedec{\includegraphics[scale=1]{ccxtreedec}}
\def\ccetree{\includegraphics[scale=1]{ccetree}}
\def\ccllcrcr{\includegraphics[scale=1]{ccllcrcr}}
\def\cclclcrcr{\includegraphics[scale=.5]{cclclcrcr}}
\def\ccllxryr{\includegraphics[scale=.5]{ccllxryr}}
\def\cclxlyrzr{\includegraphics[scale=1]{cclxlyrzr}}
\def\ccIII{\includegraphics[scale=1]{ccIII}}

\def\lxtree{\includegraphics[scale=1]{lxtree}}
\def\xdtree{\includegraphics[scale=1]{xdtree}}
\def\xytree{\includegraphics[scale=1]{xytree}}
\def\lxdtree{\includegraphics[scale=1]{lxdtree}}
\def\dtree{\includegraphics[scale=0.5]{dtree}}
\def\xyleft{\includegraphics[scale=.5]{xyleft}}
\def\xyright{\includegraphics[scale=.7]{xyright}}
\def\yzdtree{\includegraphics[scale=.7]{yzdtree}}
\def\xyztreea{\includegraphics[scale=.7]{xyztree1}}
\def\xyztreeb{\includegraphics[scale=.7]{xyztree2}}
\def\xyztreec{\includegraphics[scale=.7]{xyztree3}}
\def\tableps{\includegraphics[scale=1.4]{tableps}}

\def\decTree{\!\!\!\includegraphics[scale=1.41]{decTree.eps}}
\def\kyldec31{\!\!\includegraphics[scale=1.5]{kyldec31.eps}}

\def\yldec31{\!\!\includegraphics[scale=1.5]{ntreedec3.eps}}

\def\xyldec43{\!\!\includegraphics[scale=1.0]{mtreedec2.eps}}

\def\DecTreeProd1{\!\!\includegraphics[scale=1.5]{DecTreeProd1.eps}}
\def\TreeProd1{\!\!\includegraphics[scale=1.5]{TreeProd1.eps}}
\def\MPath{\!\!\includegraphics[scale=0.8]{MPath.eps}}
\def\DecMPath{\!\!\includegraphics[scale=0.8]{DecMPath.eps}}
\def\RiseMPath{\!\!\includegraphics[scale=0.8]{RiseMPath.eps}}
\def\DecomMPath{\!\!\includegraphics[scale=1.2]{DecomMPath.eps}}
\def\BijDecMPath{\!\!\includegraphics[scale=0.8]{BijDecMPath.eps}}
\def\MPathProd{\!\!\includegraphics[scale=1]{MPathProd.eps}}


\def\xlb2{{\scalebox{0.40}{ 
\begin{picture}(30,42)(38,-38)
\SetWidth{0.5} \SetColor{Black} \Vertex(45,-3){5.66}
\SetWidth{1.0} \Line(45,-3)(45,-33) \SetWidth{0.5}
\Vertex(45,-33){5.66}
\put(50,-50){\em\huge x}
\end{picture}}}}

\def\ylb2{{\scalebox{0.40}{ 
\begin{picture}(30,42)(38,-38)
\SetWidth{0.5} \SetColor{Black} \Vertex(45,-3){5.66}
\SetWidth{1.0} \Line(45,-3)(45,-33) \SetWidth{0.5}
\Vertex(45,-33){5.66}
\put(50,-50){\em\huge y}
\end{picture}}}}

\def\xyld31{{\scalebox{0.40}{ 
\begin{picture}(42,42)(23,-38)
\SetColor{Black} \SetWidth{0.5} \Vertex(45,-3){5.66}
\Vertex(30,-33){5.66} \Vertex(60,-33){5.66} \SetWidth{1.0}
\Line(45,-3)(30,-33) \Line(60,-33)(45,-3)
\put(20,-58){\em\huge x}
\put(50,-58){\em\huge y}
\end{picture}}}}

\def\xylf41{{\scalebox{0.40}{ 
\begin{picture}(52,80)(38,-10)
\SetColor{Black} \SetWidth{0.5} \Vertex(45,27){5.66}
\Vertex(45,-3){5.66} \SetWidth{1.0} \Line(45,27)(45,-3)
\put(45,-25){\em\huge x}
\SetWidth{0.5} \Vertex(60,57){5.66} \SetWidth{1.0}
\Line(45,27)(60,57) \SetWidth{0.5} \Vertex(75,27){5.66}
\SetWidth{1.0} \Line(75,27)(60,57)
\put(75,7){\em\huge y}
\end{picture}}}}

\def\xylg42{{\scalebox{0.40}{ 
\begin{picture}(52,80)(8,-10)
\SetColor{Black} \SetWidth{0.5} \Vertex(45,27){5.66}
\Vertex(45,-3){5.66} \SetWidth{1.0} \Line(45,27)(45,-3)
\put(39,-25){\em\huge y}
\SetWidth{0.5} \Vertex(15,27){5.66} \Vertex(30,57){5.66}
\SetWidth{1.0} \Line(15,27)(30,57) \Line(45,27)(30,57)
\put(5,2){\em\huge x}
\end{picture}}}}


\def\ta1{{\scalebox{0.25}{ 
\begin{picture}(12,12)(38,-38)
\SetWidth{0.5} \SetColor{Black} \Vertex(45,-33){5.66}
\end{picture}}}}

\def\tb2{{\scalebox{0.25}{ 
\begin{picture}(12,42)(38,-38)
\SetWidth{0.5} \SetColor{Black} \Vertex(45,-3){5.66}
\SetWidth{1.0} \Line(45,-3)(45,-33) \SetWidth{0.5}
\Vertex(45,-33){5.66}
\end{picture}}}}

\def\tc3{{\scalebox{0.25}{ 
\begin{picture}(12,72)(38,-38)
\SetWidth{0.5} \SetColor{Black} \Vertex(45,27){5.66}
\SetWidth{1.0} \Line(45,27)(45,-3) \SetWidth{0.5}
\Vertex(45,-33){5.66} \SetWidth{1.0} \Line(45,-3)(45,-33)
\SetWidth{0.5} \Vertex(45,-3){5.66}
\end{picture}}}}

\def\td31{{\scalebox{0.25}{ 
\begin{picture}(42,42)(23,-38)
\SetWidth{0.5} \SetColor{Black} \Vertex(45,-3){5.66}
\Vertex(30,-33){5.66} \Vertex(60,-33){5.66} \SetWidth{1.0}
\Line(45,-3)(30,-33) \Line(60,-33)(45,-3)
\end{picture}}}}

\def\xtd31{{\scalebox{0.35}{ 
\begin{picture}(70,42)(13,-35)
\SetWidth{0.5} \SetColor{Black} \Vertex(45,-3){5.66}
\Vertex(30,-33){5.66} \Vertex(60,-33){5.66} \SetWidth{1.0}
\Line(45,-3)(30,-33) \Line(60,-33)(45,-3)
\put(38,-38){\em \huge x}
\end{picture}}}}

\def\ytd31{{\scalebox{0.35}{ 
\begin{picture}(70,42)(13,-35)
\SetWidth{0.5} \SetColor{Black} \Vertex(45,-3){5.66}
\Vertex(30,-33){5.66} \Vertex(60,-33){5.66} \SetWidth{1.0}
\Line(45,-3)(30,-33) \Line(60,-33)(45,-3)
\put(38,-38){\em \huge y}
\end{picture}}}}

\def\xldec41r{{\scalebox{0.35}{ 
\begin{picture}(70,42)(13,-45)
\SetColor{Black}
\SetWidth{0.5} \Vertex(45,-3){5.66}
\Vertex(30,-33){5.66} \Vertex(60,-33){5.66}
\Vertex(60,-63){5.66}
\SetWidth{1.0}
\Line(45,-3)(30,-33) \Line(60,-33)(45,-3)
\Line(60,-33)(60,-63)
\put(38,-38){\em \huge x}

\end{picture}}}}

\def\xyrlong{{\scalebox{0.35}{ 
\begin{picture}(70,72)(13,-48)
\SetColor{Black}
\SetWidth{0.5} \Vertex(45,-3){5.66}
\Vertex(30,-33){5.66} \Vertex(60,-33){5.66} \SetWidth{1.0}
\Line(45,-3)(30,-33) \Line(60,-33)(45,-3)
\put(38,-38){\em\huge x}
\SetWidth{0.5}
\Vertex(45,-63){5.66} \Vertex(75,-63){5.66} \SetWidth{1.0}
\Line(60,-33)(45,-63) \Line(60,-33)(75,-63)
\put(55,-63){\em\huge y}
\end{picture}}}}

\def\xyllong{{\scalebox{0.35}{ 
\begin{picture}(70,72)(13,-48)
\SetColor{Black}
\SetWidth{0.5} \Vertex(45,-3){5.66}
\Vertex(30,-33){5.66} \Vertex(60,-33){5.66} \SetWidth{1.0}
\Line(45,-3)(30,-33) \Line(60,-33)(45,-3)
\put(40,-33){\em\huge y}
\SetWidth{0.5}
\Vertex(15,-63){5.66} \Vertex(45,-63){5.66} \SetWidth{1.0}
\Line(30,-33)(15,-63) \Line(30,-33)(45,-63)
\put(25,-63){\em\huge x}
\end{picture}}}}

\def\xyldec43{{\scalebox{0.35}{ 
\begin{picture}(70,62)(13,-25)
\SetColor{Black}
\SetWidth{0.5} \Vertex(45,-3){5.66}
\Vertex(15,-33){5.66} \Vertex(45,-38){5.66}
\Vertex(75,-33){5.66}
\SetWidth{1.0}
\Line(45,-3)(15,-33) \Line(45,-3)(45,-38)
\Line(45,-3)(74,-33)
\put(25,-33){\em\huge x}
\put(50,-33){\em\huge y}
\end{picture}}}}

\def\te4{{\scalebox{0.25}{ 
\begin{picture}(12,102)(38,-8)
\SetWidth{0.5} \SetColor{Black} \Vertex(45,57){5.66}
\Vertex(45,-3){5.66} \Vertex(45,27){5.66} \Vertex(45,87){5.66}
\SetWidth{1.0} \Line(45,57)(45,27) \Line(45,-3)(45,27)
\Line(45,57)(45,87)
\end{picture}}}}

\def\tf41{{\scalebox{0.25}{ 
\begin{picture}(42,72)(38,-8)
\SetWidth{0.5} \SetColor{Black} \Vertex(45,27){5.66}
\Vertex(45,-3){5.66} \SetWidth{1.0} \Line(45,27)(45,-3)
\SetWidth{0.5} \Vertex(60,57){5.66} \SetWidth{1.0}
\Line(45,27)(60,57) \SetWidth{0.5} \Vertex(75,27){5.66}
\SetWidth{1.0} \Line(75,27)(60,57)
\end{picture}}}}

\def\tg42{{\scalebox{0.25}{ 
\begin{picture}(42,72)(8,-8)
\SetWidth{0.5} \SetColor{Black} \Vertex(45,27){5.66}
\Vertex(45,-3){5.66} \SetWidth{1.0} \Line(45,27)(45,-3)
\SetWidth{0.5} \Vertex(15,27){5.66} \Vertex(30,57){5.66}
\SetWidth{1.0} \Line(15,27)(30,57) \Line(45,27)(30,57)
\end{picture}}}}

\def\th43{{\scalebox{0.25}{ 
\begin{picture}(42,42)(8,-8)
\SetWidth{0.5} \SetColor{Black} \Vertex(45,-3){5.66}
\Vertex(15,-3){5.66} \Vertex(30,27){5.66} \SetWidth{1.0}
\Line(15,-3)(30,27) \Line(45,-3)(30,27) \Line(30,27)(30,-3)
\SetWidth{0.5} \Vertex(30,-3){5.66}
\end{picture}}}}

\def\thII43{{\scalebox{0.25}{ 
\begin{picture}(72,57) (68,-128)
    \SetWidth{0.5}
    \SetColor{Black}
    \Vertex(105,-78){5.66}
    \SetWidth{1.5}
    \Line(105,-78)(75,-123)
    \Line(105,-78)(105,-123)
    \Line(105,-78)(135,-123)
    \SetWidth{0.5}
    \Vertex(75,-123){5.66}
    \Vertex(105,-123){5.66}
    \Vertex(135,-123){5.66}
  \end{picture}
  }}}

\def\thj44{{\scalebox{0.25}{ 
\begin{picture}(42,72)(8,-8)
\SetWidth{0.5} \SetColor{Black} \Vertex(30,57){5.66}
\SetWidth{1.0} \Line(30,57)(30,27) \SetWidth{0.5}
\Vertex(30,27){5.66} \SetWidth{1.0} \Line(45,-3)(30,27)
\SetWidth{0.5} \Vertex(45,-3){5.66} \Vertex(15,-3){5.66}
\SetWidth{1.0} \Line(15,-3)(30,27)
\end{picture}}}}

\def\xthj44{{\scalebox{0.35}{ 
\begin{picture}(42,72)(8,-8)
\SetWidth{0.5} \SetColor{Black} \Vertex(30,57){5.66}
\SetWidth{1.0} \Line(30,57)(30,27) \SetWidth{0.5}
\Vertex(30,27){5.66} \SetWidth{1.0} \Line(45,-3)(30,27)
\SetWidth{0.5} \Vertex(45,-3){5.66} \Vertex(15,-3){5.66}
\SetWidth{1.0} \Line(15,-3)(30,27)
\put(25,-3){\em\huge x}
\end{picture}}}}

\def\ti5{{\scalebox{0.25}{ 
\begin{picture}(12,132)(23,-8)
\SetWidth{0.5} \SetColor{Black} \Vertex(30,117){5.66}
\SetWidth{1.0} \Line(30,117)(30,87) \SetWidth{0.5}
\Vertex(30,87){5.66} \Vertex(30,57){5.66} \Vertex(30,27){5.66}
\Vertex(30,-3){5.66} \SetWidth{1.0} \Line(30,-3)(30,27)
\Line(30,27)(30,57) \Line(30,87)(30,57)
\end{picture}}}}

\def\tj51{{\scalebox{0.25}{ 
\begin{picture}(42,102)(53,-38)
\SetWidth{0.5} \SetColor{Black} \Vertex(61,27){4.24}
\SetWidth{1.0} \Line(75,57)(90,27) \Line(60,27)(75,57)
\SetWidth{0.5} \Vertex(90,-3){5.66} \Vertex(60,27){5.66}
\Vertex(75,57){5.66} \Vertex(90,-33){5.66} \SetWidth{1.0}
\Line(90,-33)(90,-3) \Line(90,-3)(90,27) \SetWidth{0.5}
\Vertex(90,27){5.66}
\end{picture}}}}

\def\tk52{{\scalebox{0.25}{ 
\begin{picture}(42,102)(23,-8)
\SetWidth{0.5} \SetColor{Black} \Vertex(60,57){5.66}
\Vertex(45,87){5.66} \SetWidth{1.0} \Line(45,87)(60,57)
\SetWidth{0.5} \Vertex(30,57){5.66} \SetWidth{1.0}
\Line(30,57)(45,87) \SetWidth{0.5} \Vertex(30,-3){5.66}
\SetWidth{1.0} \Line(30,-3)(30,27) \SetWidth{0.5}
\Vertex(30,27){5.66} \SetWidth{1.0} \Line(30,57)(30,27)
\end{picture}}}}

\def\tl53{{\scalebox{0.25}{ 
\begin{picture}(42,102)(8,-8)
\SetWidth{0.5} \SetColor{Black} \Vertex(30,57){5.66}
\Vertex(30,27){5.66} \SetWidth{1.0} \Line(30,57)(30,27)
\SetWidth{0.5} \Vertex(30,87){5.66} \SetWidth{1.0}
\Line(30,27)(45,-3) \SetWidth{0.5} \Vertex(15,-3){5.66}
\SetWidth{1.0} \Line(15,-3)(30,27) \Line(30,57)(30,87)
\SetWidth{0.5} \Vertex(45,-3){5.66}
\end{picture}}}}

\def\tm54{{\scalebox{0.25}{ 
\begin{picture}(42,72)(8,-38)
\SetWidth{0.5} \SetColor{Black} \Vertex(30,-3){5.66}
\SetWidth{1.0} \Line(30,27)(30,-3) \Line(30,-3)(45,-33)
\SetWidth{0.5} \Vertex(15,-33){5.66} \SetWidth{1.0}
\Line(15,-33)(30,-3) \SetWidth{0.5} \Vertex(45,-33){5.66}
\SetWidth{1.0} \Line(30,-33)(30,-3) \SetWidth{0.5}
\Vertex(30,-33){5.66} \Vertex(30,27){5.66}
\end{picture}}}}

\def\tn55{{\scalebox{0.25}{ 
\begin{picture}(42,72)(8,-38)
\SetWidth{0.5} \SetColor{Black} \Vertex(15,-33){5.66}
\Vertex(45,-33){5.66} \Vertex(30,27){5.66} \SetWidth{1.0}
\Line(45,-33)(45,-3) \SetWidth{0.5} \Vertex(45,-3){5.66}
\Vertex(15,-3){5.66} \SetWidth{1.0} \Line(30,27)(45,-3)
\Line(15,-3)(30,27) \Line(15,-3)(15,-33)
\end{picture}}}}

\def\tp56{{\scalebox{0.25}{ 
\begin{picture}(66,111)(0,0)
\SetWidth{0.5} \SetColor{Black} \Vertex(30,66){5.66}
\Vertex(45,36){5.66} \SetWidth{1.0} \Line(30,66)(45,36)
\Line(15,36)(30,66) \SetWidth{0.5} \Vertex(30,6){5.66}
\Vertex(60,6){5.66} \SetWidth{1.0} \Line(60,6)(45,36)
\SetWidth{0.5}
\SetWidth{1.0} \Line(45,36)(30,6) \SetWidth{0.5}
\Vertex(15,36){5.66}
\end{picture}}}}

\def\tq57{{\scalebox{0.25}{ 
\begin{picture}(81,111)(0,0)
\SetWidth{0.5} \SetColor{Black} \Vertex(45,36){5.66}
\Vertex(30,6){5.66} \Vertex(60,6){5.66} \SetWidth{1.0}
\Line(60,6)(45,36) \SetWidth{0.5}
\SetWidth{1.0} \Line(45,36)(30,6) \SetWidth{0.5}
\Vertex(75,36){5.66} \SetWidth{1.0} \Line(45,36)(60,66)
\Line(60,66)(75,36) \SetWidth{0.5} \Vertex(60,66){5.66}
\end{picture}}}}

\def\tr58{{\scalebox{0.25}{ 
\begin{picture}(81,111)(0,0)
\SetWidth{0.5} \SetColor{Black} \Vertex(60,6){5.66}
\Vertex(75,36){5.66} \SetWidth{1.0} \Line(60,66)(75,36)
\SetWidth{0.5} \Vertex(60,66){5.66}
\SetWidth{1.0} \Line(60,36)(60,66) \Line(60,6)(60,36)
\SetWidth{0.5} \Vertex(60,36){5.66} \Vertex(45,36){5.66}
\SetWidth{1.0} \Line(60,66)(45,36)
\end{picture}}}}

\def\ts59{{\scalebox{0.25}{ 
\begin{picture}(81,111)(0,0)
\SetWidth{0.5} \SetColor{Black}
\Vertex(75,36){5.66} \SetWidth{1.0} \Line(60,66)(75,36)
\SetWidth{0.5} \Vertex(60,66){5.66}
\SetWidth{1.0} \Line(60,36)(60,66) \SetWidth{0.5}
\Vertex(60,36){5.66} \Vertex(45,36){5.66} \SetWidth{1.0}
\Line(60,66)(45,36) \Line(75,6)(75,36) \SetWidth{0.5}
\Vertex(75,6){5.66}
\end{picture}}}}

\def\tt591{{\scalebox{0.25}{ 
\begin{picture}(81,111)(0,0)
\SetWidth{0.5} \SetColor{Black}
\Vertex(75,36){5.66} \SetWidth{1.0} \Line(60,66)(75,36)
\SetWidth{0.5} \Vertex(60,66){5.66}
\SetWidth{1.0} \Line(60,36)(60,66) \SetWidth{0.5}
\Vertex(60,36){5.66} \Vertex(45,36){5.66} \SetWidth{1.0}
\Line(60,66)(45,36) \SetWidth{0.5} \Vertex(45,6){5.66}
\SetWidth{1.0} \Line(45,6)(45,36)
\end{picture}}}}


\def\ydec31{\!\!\includegraphics[scale=0.5]{ydec31.eps}}

\def\kyldec31{\!\!\includegraphics[scale=0.5]{kyldec31.eps}}

\def\yldec31{\!\!\includegraphics[scale=0.5]{ntreedec3.eps}}





\def\lta1{{\scalebox{0.25}{ 
\begin{picture}(0,45)(60,-15)
\SetWidth{1.5} \SetColor{Black} \Line(60,30)(60,-15)
\end{picture}}}}

\def\ltb2{{\scalebox{0.25}{ 
\begin{picture}(12,45)(53,-15)
\SetWidth{1.5} \SetColor{Black} \Line(60,30)(60,-15)
\SetWidth{0.5} \Vertex(60,0){5.66}
\end{picture}}}}

\def\ltc3{{\scalebox{0.25}{ 
\begin{picture}(12,75)(53,-15)
\SetWidth{0.5} \SetColor{Black} \Vertex(60,30){5.66}
\SetWidth{1.5} \Line(60,60)(60,-15) \SetWidth{0.5}
\Vertex(60,0){5.66}
\end{picture}}}}

\def\ltd31{{\scalebox{0.25}{ 
\begin{picture}(75,90)(0,0)
\SetWidth{0.5} \SetColor{Black}
\Vertex(60,15){5.66} \SetWidth{1.5} \Line(45,45)(60,15)
\Line(60,15)(75,45) \Line(60,15)(60,0)
\end{picture}}}}

\def\lte4{{\scalebox{0.25}{ 
\begin{picture}(66,120)(0,0)
\SetWidth{0.5} \SetColor{Black}
\Vertex(60,45){5.66} \Vertex(60,75){5.66} \SetWidth{1.5}
\Line(60,105)(60,0) \SetWidth{0.5} \Vertex(60,15){5.66}
\end{picture}}}}

\def\ltf41l{{\scalebox{0.25}{ 
\begin{picture}(75,120)(0,0)
\SetWidth{0.5} \SetColor{Black}
\Vertex(60,15){5.66} \SetWidth{1.5} \Line(60,0)(60,15)
\Line(60,15)(45,45) \Line(60,15)(75,45) \SetWidth{0.5}
\Vertex(45,45){5.66} \SetWidth{1.5} \Line(45,45)(45,75)
\end{picture}}}}

\def\ltg41r{{\scalebox{0.25}{ 
\begin{picture}(81,120)(0,0)
\SetWidth{0.5} \SetColor{Black}
\Vertex(60,15){5.66} \SetWidth{1.5} \Line(75,45)(75,75)
\Line(60,0)(60,15) \Line(60,15)(45,45) \Line(60,15)(75,45)
\SetWidth{0.5} \Vertex(75,45){5.66}
\end{picture}}}}

\delete{
\begin{picture}(81,120)(0,0)
\SetWidth{0.5} \SetColor{Black}
\Vertex(60,15){5.66} \SetWidth{1.5} \Line(75,45)(75,75)
\Line(60,0)(60,15) \Line(60,15)(45,45) \Line(60,15)(75,45)
\SetWidth{0.5} \Vertex(75,45){5.66}
\end{picture}}

\def\lth42{{\scalebox{0.25}{ 
\begin{picture}(75,150)(0,0)
\SetWidth{0.5} \SetColor{Black}
\Vertex(60,45){5.66} \SetWidth{1.5} \Line(60,30)(60,45)
\Line(60,45)(45,75) \Line(60,45)(75,75) \SetWidth{0.5}
\Vertex(60,15){5.66} \SetWidth{1.5} \Line(60,0)(60,30)
\end{picture}}}}

\def\lti43{{\scalebox{0.25}{ 
\begin{picture}(75,150)(0,0)
\SetWidth{0.5} \SetColor{Black}
\SetWidth{1.5} \Line(60,30)(60,45) \SetWidth{0.5}
\Vertex(60,15){5.66} \SetWidth{1.5} \Line(60,0)(60,30)
\Line(60,15)(75,45) \Line(60,15)(45,45)
\end{picture}}}}



\def\ma1{{\scalebox{0.35}{ 
\begin{picture}(45,12)(25,-38)
\SetColor{Black}
\SetWidth{0.5} \Vertex(45,-33){5.66}
\end{picture}}}}

\def\mb2{{\scalebox{0.35}{ 
\begin{picture}(55,32)(5,-38)
\SetColor{Black}
\SetWidth{0.5} \Vertex(15,-33){5.66}
\SetWidth{1.0} \Line(15,-33)(45,-33)
\SetWidth{0.5} \Vertex(45,-33){5.66}
\end{picture}}}}

\def\xmb2{{\scalebox{0.35}{ 
\begin{picture}(55,32)(5,-38)
\SetColor{Black}
\SetWidth{0.5} \Vertex(15,-33){5.66}
\SetWidth{1.0} \Line(15,-33)(45,-33)
\put(25,-23){\em\huge x}
\SetWidth{0.5} \Vertex(45,-33){5.66}
\end{picture}}}}

\def\xpmb2{{\scalebox{0.35}{ 
\begin{picture}(55,32)(5,-38)
\SetColor{Black}
\SetWidth{0.5} \Vertex(15,-33){5.66}
\SetWidth{1.0} \Line(15,-33)(45,-33)
\put(25,-23){\em\huge $x'$}
\SetWidth{0.5} \Vertex(45,-33){5.66}
\end{picture}}}}

\def\motz1{{\scalebox{0.35}{ 
\begin{picture}(55,32)(5,-38)
\SetColor{Black}
\SetWidth{0.5} \Vertex(15,-33){5.66}
\SetWidth{1.0} \Line(15,-33)(45,-33)
\put(25,-23){\em\huge x_{i_1}}
\SetWidth{0.5} \Vertex(45,-33){5.66}
\end{picture}}}}

\def\motzb{{\scalebox{0.35}{ 
\begin{picture}(55,32)(5,-38)
\SetColor{Black}
\SetWidth{0.5} \Vertex(15,-33){5.66}
\SetWidth{1.0} \Line(15,-33)(45,-33)
\put(25,-23){\em\huge x_{i_{b-1}}}
\SetWidth{0.5} \Vertex(45,-33){5.66}
\end{picture}}}}

\def\ymb2{{\scalebox{0.35}{ 
\begin{picture}(55,32)(5,-38)
\SetColor{Black}
\SetWidth{0.5} \Vertex(15,-33){5.66}
\SetWidth{1.0} \Line(15,-33)(45,-33)
\put(25,-23){\em\huge y}
\SetWidth{0.5} \Vertex(45,-33){5.66}
\end{picture}}}}


\def\mc3{{\scalebox{0.35}{ 
\begin{picture}(85,42)(5,-38)
\SetColor{Black}
\SetWidth{0.5} \Vertex(15,-33){5.66}
\SetWidth{1.0} \Line(15,-33)(45,-33)
\SetWidth{0.5} \Vertex(45,-33){5.66}
\SetWidth{1.0} \Line(45,-33)(75,-33)
\SetWidth{0.5} \Vertex(75,-33){5.66}
\end{picture}}}}

\def\xymc3{{\scalebox{0.35}{ 
\begin{picture}(85,42)(5,-38)
\SetColor{Black}
\SetWidth{0.5} \Vertex(15,-33){5.66}
\SetWidth{1.0} \Line(15,-33)(45,-33)
\put(25,-23){\em\huge x}
\SetWidth{0.5} \Vertex(45,-33){5.66}
\SetWidth{1.0} \Line(45,-33)(75,-33)
\put(55,-23){\em \huge y}
\SetWidth{0.5} \Vertex(75,-33){5.66}
\end{picture}}}}

\def\md31{{\scalebox{0.35}{ 
\begin{picture}(100,42)(20,-38)
\SetColor{Black}
\SetWidth{0.5} \Vertex(60,-3){5.66}
\Vertex(30,-33){5.66} \Vertex(90,-33){5.66} \SetWidth{1.0}
\Line(60,-3)(30,-33) \Line(90,-33)(60,-3)
\end{picture}}}}

\def\amd31{{\scalebox{0.35}{ 
\begin{picture}(100,42)(20,-38)
\SetColor{Black}
\SetWidth{0.5} \Vertex(60,-3){5.66}
\Vertex(30,-33){5.66} \Vertex(90,-33){5.66} \SetWidth{1.0}
\Line(60,-3)(30,-33) \put(30,-18){\em\huge $\omega$}
\Line(90,-33)(60,-3) \put(85,-18){\em\huge $\omega$}
\end{picture}}}}

\def\bmd31{{\scalebox{0.35}{ 
\begin{picture}(100,42)(20,-38)
\SetColor{Black}
\SetWidth{0.5} \Vertex(60,-3){5.66}
\Vertex(30,-33){5.66} \Vertex(90,-33){5.66} \SetWidth{1.0}
\Line(60,-3)(30,-33) \put(30,-18){\em\huge $\beta$}
\Line(90,-33)(60,-3) \put(85,-18){\em\huge $\beta$}
\end{picture}}}}

\def\me4{{\scalebox{0.35}{ 
\begin{picture}(110,42)(0,-38)
\SetColor{Black}
\SetWidth{0.5} \Vertex(10,-33){5.66}
\SetWidth{1.0} \Line(10,-33)(40,-33)
\SetWidth{0.5} \Vertex(40,-33){5.66}
\SetWidth{1.0} \Line(40,-33)(70,-33)
\SetWidth{0.5} \Vertex(70,-33){5.66}
\SetWidth{1.0} \Line(70,-33)(100,-33)
\SetWidth{0.5} \Vertex(100,-33){5.66}
\end{picture}}}}

\def\mf41{{\scalebox{0.35}{ 
\begin{picture}(110,42)(0,-38)
\SetColor{Black}
\SetWidth{0.5} \Vertex(10,-33){5.66}
\SetWidth{1.0} \Line(10,-33)(40,-33)
\SetWidth{0.5} \Vertex(40,-33){5.66}
\Vertex(70,-3){5.66} \Vertex(100,-33){5.66}
\SetWidth{1.0}
\Line(40,-33)(70,-3) \Line(100,-33)(70,-3)
\end{picture}}}}

\def\xmf41{{\scalebox{0.35}{ 
\begin{picture}(110,42)(0,-38)
\SetColor{Black}
\SetWidth{0.5} \Vertex(10,-33){5.66}
\SetWidth{1.0} \Line(10,-33)(40,-33)
\put(20,-23){\em\huge x}
\SetWidth{0.5} \Vertex(40,-33){5.66}
\Vertex(70,-3){5.66} \Vertex(100,-33){5.66}
\SetWidth{1.0}
\Line(40,-33)(70,-3) \Line(100,-33)(70,-3)
\end{picture}}}}

\def\axmf41{{\scalebox{0.35}{ 
\begin{picture}(110,42)(0,-38)
\SetColor{Black}
\SetWidth{0.5} \Vertex(10,-33){5.66}
\SetWidth{1.0} \Line(10,-33)(40,-33)
\put(20,-23){\em\huge x}
\SetWidth{0.5} \Vertex(40,-33){5.66}
\Vertex(70,-3){5.66} \Vertex(100,-33){5.66}
\SetWidth{1.0}
\Line(40,-33)(70,-3) \Line(100,-33)(70,-3)
\put(40,-18){\em \huge $\omega$}
\put(90,-18){\em \huge $\omega$}
\end{picture}}}}

\def\mg42{{\scalebox{0.35}{ 
\begin{picture}(110,42)(0,-38)
\SetColor{Black}
\SetWidth{0.5} \Vertex(10,-33){5.66}
\SetWidth{1.0} \Line(70,-33)(100,-33)
\SetWidth{0.5} \Vertex(70,-33){5.66}
\Vertex(40,-3){5.66} \Vertex(100,-33){5.66}
\SetWidth{1.0}
\Line(10,-33)(40,-3) \Line(70,-33)(40,-3)
\end{picture}}}}

\def\mh43{{\scalebox{0.35}{ 
\begin{picture}(110,42)(0,-38)
\SetColor{Black}
\SetWidth{0.5} \Vertex(10,-33){5.66}
\SetWidth{1.0} \Line(10,-33)(40,-3)
\SetWidth{0.5} \Vertex(40,-3){5.66}
\SetWidth{1.0} \Line(40,-3)(70,-3)
\SetWidth{0.5} \Vertex(70,-3){5.66}
\SetWidth{1.0} \Line(70,-3)(100,-33)
\SetWidth{0.5} \Vertex(100,-33){5.66}
\end{picture}}}}

\def\xmh43{{\scalebox{0.35}{ 
\begin{picture}(110,42)(0,-38)
\SetColor{Black}
\SetWidth{0.5} \Vertex(10,-33){5.66}
\SetWidth{1.0} \Line(10,-33)(40,-3)
\SetWidth{0.5} \Vertex(40,-3){5.66}
\SetWidth{1.0} \Line(40,-3)(70,-3)
\put(50,3){\em\huge x}
\SetWidth{0.5} \Vertex(70,-3){5.66}
\SetWidth{1.0} \Line(70,-3)(100,-33)
\SetWidth{0.5} \Vertex(100,-33){5.66}
\end{picture}}}}

\def\ymh43{{\scalebox{0.35}{ 
\begin{picture}(110,42)(0,-38)
\SetColor{Black}
\SetWidth{0.5} \Vertex(10,-33){5.66}
\SetWidth{1.0} \Line(10,-33)(40,-3)
\SetWidth{0.5} \Vertex(40,-3){5.66}
\SetWidth{1.0} \Line(40,-3)(70,-3)
\put(50,3){\em\huge y}
\SetWidth{0.5} \Vertex(70,-3){5.66}
\SetWidth{1.0} \Line(70,-3)(100,-33)
\SetWidth{0.5} \Vertex(100,-33){5.66}
\end{picture}}}}

\def\axmh43{{\scalebox{0.35}{ 
\begin{picture}(110,42)(0,-38)
\SetColor{Black}
\SetWidth{0.5} \Vertex(10,-33){5.66}
\SetWidth{1.0} \Line(10,-33)(40,-3) \put(13,-17){\em \huge $\omega$}
\SetWidth{0.5} \Vertex(40,-3){5.66}
\SetWidth{1.0} \Line(40,-3)(70,-3)
\put(50,3){\em\huge x}
\SetWidth{0.5} \Vertex(70,-3){5.66}
\SetWidth{1.0} \Line(70,-3)(100,-33) \put(90,-17){\em\huge $\omega$}
\SetWidth{0.5} \Vertex(100,-33){5.66}
\end{picture}}}}

\def\mi5{{\scalebox{0.35}{ 
\begin{picture}(140,42)(0,-38)
\SetColor{Black}
\SetWidth{0.5} \Vertex(10,-33){5.66}
\SetWidth{1.0} \Line(10,-33)(40,-33)
\SetWidth{0.5} \Vertex(40,-33){5.66}
\SetWidth{1.0} \Line(40,-33)(70,-33)
\SetWidth{0.5} \Vertex(70,-33){5.66}
\SetWidth{1.0} \Line(70,-33)(100,-33)
\SetWidth{0.5} \Vertex(100,-33){5.66}
\SetWidth{1.0} \Line(100,-33)(130,-33)
\SetWidth{0.5} \Vertex(130,-33){5.66}
\end{picture}}}}

\def\mj51{{\scalebox{0.35}{ 
\begin{picture}(140,42)(0,-38)
\SetColor{Black}
\SetWidth{0.5} \Vertex(10,-33){5.66}
\SetWidth{1.0} \Line(10,-33)(40,-3)
\SetWidth{0.5} \Vertex(40,-3){5.66}
\SetWidth{1.0} \Line(40,-3)(70,-33)
\SetWidth{0.5} \Vertex(70,-33){5.66}
\SetWidth{1.0} \Line(70,-33)(100,-33)
\SetWidth{0.5} \Vertex(100,-33){5.66}
\SetWidth{1.0} \Line(100,-33)(130,-33)
\SetWidth{0.5} \Vertex(130,-33){5.66}
\end{picture}}}}

\def\mk52{{\scalebox{0.35}{ 
\begin{picture}(140,42)(0,-38)
\SetColor{Black}
\SetWidth{0.5} \Vertex(10,-33){5.66}
\SetWidth{1.0} \Line(10,-33)(40,-33)
\SetWidth{0.5} \Vertex(40,-33){5.66}
\SetWidth{1.0} \Line(40,-33)(70,-3)
\SetWidth{0.5} \Vertex(70,-3){5.66}
\SetWidth{1.0} \Line(70,-3)(100,-33)
\SetWidth{0.5} \Vertex(100,-33){5.66}
\SetWidth{1.0} \Line(100,-33)(130,-33)
\SetWidth{0.5} \Vertex(130,-33){5.66}
\end{picture}}}}

\def\ml53{{\scalebox{0.35}{ 
\begin{picture}(140,42)(0,-38)
\SetColor{Black}
\SetWidth{0.5} \Vertex(10,-33){5.66}
\SetWidth{1.0} \Line(10,-33)(40,-33)
\SetWidth{0.5} \Vertex(40,-33){5.66}
\SetWidth{1.0} \Line(40,-33)(70,-33)
\SetWidth{0.5} \Vertex(70,-33){5.66}
\SetWidth{1.0} \Line(70,-33)(100,-3)
\SetWidth{0.5} \Vertex(100,-3){5.66}
\SetWidth{1.0} \Line(100,-3)(130,-33)
\SetWidth{0.5} \Vertex(130,-33){5.66}
\end{picture}}}}

\def\mm54{{\scalebox{0.35}{ 
\begin{picture}(140,42)(0,-38)
\SetColor{Black}
\SetWidth{0.5} \Vertex(10,-33){5.66}
\SetWidth{1.0} \Line(10,-33)(40,-3)
\SetWidth{0.5} \Vertex(40,-3){5.66}
\SetWidth{1.0} \Line(40,-3)(70,-3)
\SetWidth{0.5} \Vertex(70,-3){5.66}
\SetWidth{1.0} \Line(70,-3)(100,-33)
\SetWidth{0.5} \Vertex(100,-33){5.66}
\SetWidth{1.0} \Line(100,-33)(130,-33)
\SetWidth{0.5} \Vertex(130,-33){5.66}
\end{picture}}}}

\def\xymm54{{\scalebox{0.35}{ 
\begin{picture}(140,42)(0,-38)
\SetColor{Black}
\SetWidth{0.5} \Vertex(10,-33){5.66}
\SetWidth{1.0} \Line(10,-33)(40,-3)
\SetWidth{0.5} \Vertex(40,-3){5.66}
\SetWidth{1.0} \Line(40,-3)(70,-3)
\put(50,3){\em\huge x}
\SetWidth{0.5} \Vertex(70,-3){5.66}
\SetWidth{1.0} \Line(70,-3)(100,-33)
\SetWidth{0.5} \Vertex(100,-33){5.66}
\SetWidth{1.0} \Line(100,-33)(130,-33)
\put(110,-27){\em\huge y}
\SetWidth{0.5} \Vertex(130,-33){5.66}
\end{picture}}}}

\def\mn55{{\scalebox{0.35}{ 
\begin{picture}(140,42)(0,-38)
\SetColor{Black}
\SetWidth{0.5} \Vertex(10,-33){5.66}
\SetWidth{1.0} \Line(10,-33)(40,-33)
\SetWidth{0.5} \Vertex(40,-33){5.66}
\SetWidth{1.0} \Line(40,-33)(70,-3)
\SetWidth{0.5} \Vertex(70,-3){5.66}
\SetWidth{1.0} \Line(70,-3)(100,-3)
\SetWidth{0.5} \Vertex(100,-3){5.66}
\SetWidth{1.0} \Line(100,-3)(130,-33)
\SetWidth{0.5} \Vertex(130,-33){5.66}
\end{picture}}}}

\def\xymn55{{\scalebox{0.35}{ 
\begin{picture}(140,42)(0,-38)
\SetColor{Black}
\SetWidth{0.5} \Vertex(10,-33){5.66}
\SetWidth{1.0} \Line(10,-33)(40,-33)
\put(20,-27){\em\huge x}
\SetWidth{0.5} \Vertex(40,-33){5.66}
\SetWidth{1.0} \Line(40,-33)(70,-3)
\SetWidth{0.5} \Vertex(70,-3){5.66}
\SetWidth{1.0} \Line(70,-3)(100,-3)
\put(80,3){\em\huge y}
\SetWidth{0.5} \Vertex(100,-3){5.66}
\SetWidth{1.0} \Line(100,-3)(130,-33)
\SetWidth{0.5} \Vertex(130,-33){5.66}
\end{picture}}}}

\def\mo56{{\scalebox{0.35}{ 
\begin{picture}(140,42)(0,-38)
\SetColor{Black}
\SetWidth{0.5} \Vertex(10,-33){5.66}
\SetWidth{1.0} \Line(10,-33)(40,-3)
\SetWidth{0.5} \Vertex(40,-3){5.66}
\SetWidth{1.0} \Line(40,-3)(70,-3)
\SetWidth{0.5} \Vertex(70,-3){5.66}
\SetWidth{1.0} \Line(70,-3)(100,-3)
\SetWidth{0.5} \Vertex(100,-3){5.66}
\SetWidth{1.0} \Line(100,-3)(130,-33)
\SetWidth{0.5} \Vertex(130,-33){5.66}
\end{picture}}}}

\def\xymo56{{\scalebox{0.35}{ 
\begin{picture}(140,42)(0,-38)
\SetColor{Black}
\SetWidth{0.5} \Vertex(10,-33){5.66}
\SetWidth{1.0} \Line(10,-33)(40,-3)
\SetWidth{0.5} \Vertex(40,-3){5.66}
\SetWidth{1.0} \Line(40,-3)(70,-3)
\put(50,3){\em\huge x}
\SetWidth{0.5} \Vertex(70,-3){5.66}
\SetWidth{1.0} \Line(70,-3)(100,-3)
\put(80,3){\em\huge y}
\SetWidth{0.5} \Vertex(100,-3){5.66}
\SetWidth{1.0} \Line(100,-3)(130,-33)
\SetWidth{0.5} \Vertex(130,-33){5.66}
\end{picture}}}}

\def\mpp57{{\scalebox{0.35}{ 
\begin{picture}(140,42)(0,-38)
\SetColor{Black}
\SetWidth{0.5} \Vertex(10,-33){5.66}
\SetWidth{1.0} \Line(10,-33)(40,-3)
\SetWidth{0.5} \Vertex(40,-3){5.66}
\SetWidth{1.0} \Line(40,-3)(70,-33)
\SetWidth{0.5} \Vertex(70,-33){5.66}
\SetWidth{1.0} \Line(70,-33)(100,-3)
\SetWidth{0.5} \Vertex(100,-3){5.66}
\SetWidth{1.0} \Line(100,-3)(130,-33)
\SetWidth{0.5} \Vertex(130,-33){5.66}
\end{picture}}}}

\def\abmpp57{{\scalebox{0.35}{ 
\begin{picture}(140,62)(0,-38)
\SetColor{Black}
\SetWidth{0.5} \Vertex(10,-33){5.66}
\SetWidth{1.0} \Line(10,-33)(40,-3) \put(12,-13){\em \huge $\omega$}
\SetWidth{0.5} \Vertex(40,-3){5.66}
\SetWidth{1.0} \Line(40,-3)(70,-33) \put(50,-13){\em\huge $\omega$}
\SetWidth{0.5} \Vertex(70,-33){5.66}
\SetWidth{1.0} \Line(70,-33)(100,-3) \put(75,-13){\em\huge $\beta$}
\SetWidth{0.5} \Vertex(100,-3){5.66}
\SetWidth{1.0} \Line(100,-3)(130,-33) \put(120,-13){\em\huge $\beta$}
\SetWidth{0.5} \Vertex(130,-33){5.66}
\end{picture}}}}

\def\mq58{{\scalebox{0.35}{ 
\begin{picture}(140,62)(0,-38)
\SetColor{Black}
\SetWidth{0.5} \Vertex(10,-33){5.66}
\SetWidth{1.0} \Line(10,-33)(40,-3)
\SetWidth{0.5} \Vertex(40,-3){5.66}
\SetWidth{1.0} \Line(40,-3)(70,23)
\SetWidth{0.5} \Vertex(70,23){5.66}
\SetWidth{1.0} \Line(70,23)(100,-3)
\SetWidth{0.5} \Vertex(100,-3){5.66}
\SetWidth{1.0} \Line(100,-3)(130,-33)
\SetWidth{0.5} \Vertex(130,-33){5.66}
\end{picture}}}}

\def\abmq58{{\scalebox{0.35}{ 
\begin{picture}(140,62)(0,-38)
\SetColor{Black}
\SetWidth{0.5} \Vertex(10,-33){5.66}
\SetWidth{1.0} \Line(10,-33)(40,-3)
\put(10,-13){\em\huge $\omega$}
\SetWidth{0.5} \Vertex(40,-3){5.66}
\SetWidth{1.0} \Line(40,-3)(70,23)
\put(40,13){\em\huge $\beta$}
\SetWidth{0.5} \Vertex(70,23){5.66}
\SetWidth{1.0} \Line(70,23)(100,-3) \put(90,13){\em\huge$\beta$}
\SetWidth{0.5} \Vertex(100,-3){5.66}
\SetWidth{1.0} \Line(100,-3)(130,-33)
\put(120,-13){\em\huge$\omega$}
\SetWidth{0.5} \Vertex(130,-33){5.66}
\end{picture}}}}

\def\uuxdyd{{\scalebox{0.35}{ 
\begin{picture}(200,62)(-30,-68)
\SetColor{Black}
\SetWidth{0.5} \Vertex(10,-33){5.66}
\SetWidth{1.0} \Line(10,-33)(40,-3)
\SetWidth{0.5} \Vertex(40,-3){5.66}
\SetWidth{1.0} \Line(40,-3)(70,-3)
\put(50,3){\em\huge x}
\SetWidth{0.5} \Vertex(70,-3){5.66}
\SetWidth{1.0} \Line(70,-3)(100,-33)
\SetWidth{0.5} \Vertex(100,-33){5.66}
\SetWidth{1.0} \Line(100,-33)(130,-33)
\put(110,-27){\em\huge y}
\SetWidth{0.5} \Vertex(130,-33){5.66}
\Vertex(-20,-63){5.66} \Vertex(160,-63){5.66}
\SetWidth{1.0} \Line(-20,-63)(10,-33)
\Line(130,-33)(160,-63)
\end{picture}}}}

\def\uxuydd{{\scalebox{0.35}{ 
\begin{picture}(200,62)(-30,-68)
\SetColor{Black}
\SetWidth{0.5} \Vertex(10,-33){5.66}
\SetWidth{1.0} \Line(10,-33)(40,-33)
\put(20,-27){\em\huge x}
\SetWidth{0.5} \Vertex(40,-33){5.66}
\SetWidth{1.0} \Line(40,-33)(70,-3)
\SetWidth{0.5} \Vertex(70,-3){5.66}
\SetWidth{1.0} \Line(70,-3)(100,-3)
\put(80,3){\em\huge y}
\SetWidth{0.5} \Vertex(100,-3){5.66}
\SetWidth{1.0} \Line(100,-3)(130,-33)
\SetWidth{0.5} \Vertex(130,-33){5.66}
\Vertex(-20,-63){5.66} \Vertex(160,-63){5.66}
\SetWidth{1.0} \Line(-20,-63)(10,-33)
\Line(130,-33)(160,-63)
\end{picture}}}}

\def\bigdecm{{\scalebox{0.5}{ 
\begin{picture}(140,82)(250,-10)
\SetColor{Black}
\DashLine(10,-33)(610,-33){10}

\SetColor{Black}
\SetWidth{0.5} \Vertex(10,-33){5.66}
\SetWidth{1.0} \Line(10,-33)(40,-3)
\put(10,-13){\em\huge $\alpha$}
\SetWidth{0.5} \Vertex(40,-3){5.66}
\SetWidth{1.0} \Line(40,-3)(70,23)
\put(40,13){\em\huge $\beta$}
\SetWidth{0.5} \Vertex(70,23){5.66}
\SetWidth{1.0} \Line(70,23)(100,23)
\put(80,33){\em\huge $a$}
\SetWidth{0.5} \Vertex(100,23){5.66}
\SetWidth{1.0} \Line(100,23)(130,53)
\put(100,43){\em\huge $\gamma$}
\SetWidth{0.5} \Vertex(130,53){5.66}
\SetWidth{1.0} \Line(130,53)(160,53)
\put(140,63){\em\huge $b$}
\SetWidth{0.5} \Vertex(160,53){5.66}
\SetWidth{1.0} \Line(160,53)(190,53)
\put(170,63){\em\huge $c$}
\SetWidth{0.5} \Vertex(190,53){5.66}
\SetWidth{1.0} \Line(190,53)(220,23)
\put(205,43){\em\huge $\gamma$}
\SetWidth{0.5} \Vertex(220,23){5.66}
\SetWidth{1.0} \Line(220,23)(250,23)
\put(230,33){\em\huge $d$}
\SetWidth{0.5} \Vertex(250,23){5.66}
\SetWidth{1.0} \Line(250,23)(280,-3)
\put(265,13){\em\huge $\beta$}
\SetWidth{0.5} \Vertex(280,-3){5.66}
\SetWidth{1.0} \Line(280,-3)(310,-3)
\put(290,03){\em\huge $e$}
\SetWidth{0.5} \Vertex(310,-3){5.66}
\SetWidth{1.0} \Line(310,-3)(340,23)
\put(315,13){\em\huge $\delta$}
\SetWidth{0.5} \Vertex(340,23){5.66}
\SetWidth{1.0} \Line(340,23)(370,23)
\put(350,33){\em\huge $f$}
\SetWidth{0.5} \Vertex(370,23){5.66}
\SetWidth{1.0} \Line(370,23)(400,53)
\put(378,38){\em\huge $\sigma$}
\SetWidth{0.5} \Vertex(400,53){5.66}
\SetWidth{1.0} \Line(400,53)(430,53)
\put(410,63){\em\huge $g$}
\SetWidth{0.5} \Vertex(430,53){5.66}
\SetWidth{1.0} \Line(430,53)(460,23)
\put(450,38){\em\huge $\sigma$}
\SetWidth{0.5} \Vertex(460,23){5.66}
\SetWidth{1.0} \Line(460,23)(490,53)
\put(468,38){\em\huge $\tau$}
\SetWidth{0.5} \Vertex(490,53){5.66}
\SetWidth{1.0} \Line(490,53)(520,53)
\put(500,63){\em\huge $h$}
\SetWidth{0.5} \Vertex(520,53){5.66}
\SetWidth{1.0} \Line(520,53)(550,23)
\put(540,38){\em\huge $\tau$}
\SetWidth{0.5} \Vertex(550,23){5.66}
\SetWidth{1.0} \Line(550,23)(580,-3)
\put(570,8){\em\huge $\delta$}
\SetWidth{0.5} \Vertex(580,-3){5.66}
\SetWidth{1.0} \Line(580,-3)(610,-33)
\put(600,-18){\em\huge $\alpha$}
\SetWidth{0.5} \Vertex(610,-33){5.66}

\end{picture}}}}

\def\bigdecl{{\scalebox{0.5}{ 
\begin{picture}(140,82)(250,-10)
\SetColor{Black}
\DashLine(10,-33)(610,-33){10}

\SetColor{Black}
\SetWidth{0.5} \Vertex(10,-33){5.66}
\SetWidth{1.0} \Line(10,-33)(40,-3)
\SetWidth{0.5} \Vertex(40,-3){5.66}
\SetWidth{1.0} \Line(40,-3)(70,23)
\SetWidth{0.5} \Vertex(70,23){5.66}
\SetWidth{1.0} \Line(70,23)(100,23)
\put(80,33){\em\huge $a$}
\SetWidth{0.5} \Vertex(100,23){5.66}
\SetWidth{1.0} \Line(100,23)(130,53)
\SetWidth{0.5} \Vertex(130,53){5.66}
\SetWidth{1.0} \Line(130,53)(160,53)
\put(140,63){\em\huge $b$}
\SetWidth{0.5} \Vertex(160,53){5.66}
\SetWidth{1.0} \Line(160,53)(190,53)
\put(170,63){\em\huge $c$}
\SetWidth{0.5} \Vertex(190,53){5.66}
\SetWidth{1.0} \Line(190,53)(220,23)
\SetWidth{0.5} \Vertex(220,23){5.66}
\SetWidth{1.0} \Line(220,23)(250,23)
\put(230,33){\em\huge $d$}
\SetWidth{0.5} \Vertex(250,23){5.66}
\SetWidth{1.0} \Line(250,23)(280,-3)
\SetWidth{0.5} \Vertex(280,-3){5.66}
\SetWidth{1.0} \Line(280,-3)(310,-3)
\put(290,03){\em\huge $e$}
\SetWidth{0.5} \Vertex(310,-3){5.66}
\SetWidth{1.0} \Line(310,-3)(340,23)
\SetWidth{0.5} \Vertex(340,23){5.66}
\SetWidth{1.0} \Line(340,23)(370,53)
\SetWidth{0.5} \Vertex(370,53){5.66}
\SetWidth{1.0} \Line(370,53)(400,53)
\put(380,58){\em\huge $f$}
\SetWidth{0.5} \Vertex(400,53){5.66}
\SetWidth{1.0} \Line(400,53)(430,23)
\SetWidth{0.5} \Vertex(430,23){5.66}
\SetWidth{1.0} \Line(430,23)(460,23)
\put(440,33){\em\huge $g$}
\SetWidth{0.5} \Vertex(460,23){5.66}
\SetWidth{1.0} \Line(460,23)(490,53)
\SetWidth{0.5} \Vertex(490,53){5.66}
\SetWidth{1.0} \Line(490,53)(520,53)
\put(500,63){\em\huge $h$}
\SetWidth{0.5} \Vertex(520,53){5.66}
\SetWidth{1.0} \Line(520,53)(550,23)
\SetWidth{0.5} \Vertex(550,23){5.66}
\SetWidth{1.0} \Line(550,23)(580,-3)
\SetWidth{0.5} \Vertex(580,-3){5.66}
\SetWidth{1.0} \Line(580,-3)(610,-33)
\SetWidth{0.5} \Vertex(610,-33){5.66}

\end{picture}}}}

\def\bigdect{{\scalebox{0.4}{ 
\begin{picture}(140,120)(0,-60)
\SetColor{Black}
\SetWidth{0.5} \Vertex(70,60){5.66}
\put(48,60){\em\huge$\alpha$}
\SetWidth{1.0} \Line(70,60)(0,20)
\SetWidth{0.5} \Vertex(0,20){5.66}
\put(-15,25){\em\huge$\beta$}
\SetWidth{1.0} \Line(70,60)(70,20)
\SetWidth{0.5} \Vertex(70,20){5.66}
\put(50,20){\em\huge$e$}
\SetWidth{1.0} \Line(70,60)(140,20)
\SetWidth{0.5} \Vertex(140,20){5.66}
\put(150,25){\em\huge$\delta$}

\SetWidth{1.0} \Line(0,20)(-50,-20)
\SetWidth{0.5} \Vertex(-50,-20){5.66}
\put(-70,-20){\em\huge $a$}
\SetWidth{1.0} \Line(0,20)(0,-20)
\SetWidth{0.5} \Vertex(0,-20){5.66}
\put(-20,-20){\em\huge$\gamma$}
\SetWidth{1.0} \Line(0,20)(50,-20)
\SetWidth{0.5} \Vertex(50,-20){5.66}
\put(50,-38){\em\huge$d$}

\SetWidth{1.0} \Line(0,-20)(-30,-50)
\SetWidth{0.5} \Vertex(-30,-50){5.66}
\put(-45,-68){\em\huge$b$}
\SetWidth{1.0} \Line(0,-20)(30,-50)
\SetWidth{0.5} \Vertex(30,-50){5.66}
\put(25,-68){\em\huge$c$}

\SetWidth{1.0} \Line(140,20)(100,-10)
\SetWidth{0.5} \Vertex(100,-10){5.66}
\put(80,-20){\em\huge$f$}
\SetWidth{1.0} \Line(140,20)(140,-20)
\SetWidth{0.5} \Vertex(140,-20){5.66}
\put(150,-30){\em\huge$\sigma$}
\SetWidth{1.0} \Line(140,-20)(140,-60)
\SetWidth{0.5} \Vertex(140,-60){5.66}
\put(150,-70){\em\huge$g$}
\SetWidth{1.0} \Line(140,20)(180,-10)
\SetWidth{0.5} \Vertex(180,-10){5.66}
\put(190,-30){\em\huge$\tau$}
\SetWidth{1.0} \Line(180,-10)(180,-60)
\SetWidth{0.5} \Vertex(180,-60){5.66}
\put(190,-70){\em\huge$h$}

\end{picture}}}}

\def\bigdectl{{\scalebox{0.4}{ 
\begin{picture}(140,120)(0,-60)
\SetColor{Black}
\SetWidth{0.5} \Vertex(70,60){5.66}
\SetWidth{1.0} \Line(70,60)(0,20)
\SetWidth{0.5} \Vertex(0,20){5.66}
\SetWidth{1.0} \Line(70,60)(70,20)
\SetWidth{0.5} \Vertex(70,20){5.66}
\put(50,20){\em\huge$e$}
\SetWidth{1.0} \Line(70,60)(140,20)
\SetWidth{0.5} \Vertex(140,20){5.66}

\SetWidth{1.0} \Line(0,20)(-50,-20)
\SetWidth{0.5} \Vertex(-50,-20){5.66}
\put(-70,-20){\em\huge $a$}
\SetWidth{1.0} \Line(0,20)(0,-20)
\SetWidth{0.5} \Vertex(0,-20){5.66}
\SetWidth{1.0} \Line(0,20)(50,-20)
\SetWidth{0.5} \Vertex(50,-20){5.66}
\put(50,-38){\em\huge$d$}

\SetWidth{1.0} \Line(0,-20)(-30,-50)
\SetWidth{0.5} \Vertex(-30,-50){5.66}
\put(-45,-68){\em\huge$b$}
\SetWidth{1.0} \Line(0,-20)(30,-50)
\SetWidth{0.5} \Vertex(30,-50){5.66}
\put(25,-68){\em\huge$c$}

\SetWidth{1.0} \Line(140,20)(100,-10)
\SetWidth{0.5} \Vertex(100,-10){5.66}
\put(80,-20){\em\huge$f$}
\SetWidth{1.0} \Line(140,20)(140,-20)
\SetWidth{0.5} \Vertex(140,-20){5.66}
\SetWidth{1.0} \Line(140,-20)(140,-60)
\SetWidth{0.5} \Vertex(140,-60){5.66}
\put(150,-70){\em\huge$g$}
\SetWidth{1.0} \Line(140,20)(180,-10)
\SetWidth{0.5} \Vertex(180,-10){5.66}
\SetWidth{1.0} \Line(180,-10)(180,-60)
\SetWidth{0.5} \Vertex(180,-60){5.66}
\put(190,-70){\em\huge$h$}
\end{picture}}}}

\def\bigdectls{{\scalebox{0.4}{ 
\begin{picture}(180,120)(0,-60)
\SetColor{Black}
\SetWidth{0.5} \Vertex(70,60){5.66}
\SetWidth{1.0} \Line(70,60)(0,20)
\SetWidth{0.5} \Vertex(0,20){5.66}
\SetWidth{1.0} \Line(70,60)(70,20)
\SetWidth{0.5} \Vertex(70,20){5.66}
\put(50,20){\em\huge$e$}
\SetWidth{1.0} \Line(70,60)(140,20)
\SetWidth{0.5} \Vertex(140,20){5.66}

\SetWidth{1.0} \Line(0,20)(-50,-20)
\SetWidth{0.5} \Vertex(-50,-20){5.66}
\put(-70,-20){\em\huge $a$}
\SetWidth{1.0} \Line(0,20)(0,-20)
\SetWidth{0.5} \Vertex(0,-20){5.66}
\SetWidth{1.0} \Line(0,20)(50,-20)
\SetWidth{0.5} \Vertex(50,-20){5.66}
\put(50,-38){\em\huge$d$}

\SetWidth{1.0} \Line(0,-20)(-30,-50)
\SetWidth{0.5} \Vertex(-30,-50){5.66}
\put(-45,-68){\em\huge$b$}
\SetWidth{1.0} \Line(0,-20)(30,-50)
\SetWidth{0.5} \Vertex(30,-50){5.66}
\put(25,-68){\em\huge$c$}

\SetWidth{1.0} \Line(140,20)(100,-10)
\SetWidth{0.5} \Vertex(100,-10){5.66}
\SetWidth{1.0} \Line(100,-10)(100,-60)
\SetWidth{0.5} \Vertex(100,-60){5.66}
\put(150,-40){\em\huge$g$}
\SetWidth{1.0} \Line(140,20)(140,-20)
\SetWidth{0.5} \Vertex(140,-20){5.66}
\put(80,-70){\em\huge$f$}
\SetWidth{1.0} \Line(140,20)(180,-10)
\SetWidth{0.5} \Vertex(180,-10){5.66}
\SetWidth{1.0} \Line(180,-10)(180,-60)
\SetWidth{0.5} \Vertex(180,-60){5.66}
\put(190,-70){\em\huge$h$}
\end{picture}}}}

\def\bigdecta{{\scalebox{0.4}{ 
\begin{picture}(180,120)(-40,-60)
\SetColor{Black}
\SetWidth{0.5} \Vertex(70,60){5.66}
\SetWidth{1.0} \Line(70,60)(0,20)
\SetWidth{0.5} \Vertex(0,20){5.66}
\put(60,30){\em\huge$e$}
\SetWidth{1.0} \Line(70,60)(140,20)
\SetWidth{0.5} \Vertex(140,20){5.66}

\SetWidth{1.0} \Line(0,20)(-50,-20)
\SetWidth{0.5} \Vertex(-50,-20){5.66}
\put(-20,-10){\em\huge $a$}
\SetWidth{1.0} \Line(0,20)(0,-20)
\SetWidth{0.5} \Vertex(0,-20){5.66}
\SetWidth{1.0} \Line(0,20)(50,-20)
\SetWidth{0.5} \Vertex(50,-20){5.66}
\put(10,-20){\em\huge$d$}

\SetWidth{1.0} \Line(0,-20)(-30,-50)
\Line(0,-20)(0,-50)
\SetWidth{0.5} \Vertex(-30,-50){5.66}
\Vertex(0,-50){5.66}
\put(-22,-68){\em\huge$b$}
\SetWidth{1.0} \Line(0,-20)(30,-50)
\SetWidth{0.5} \Vertex(30,-50){5.66}
\put(8,-68){\em\huge$c$}

\SetWidth{1.0} \Line(140,20)(100,-20)
\SetWidth{0.5} \Vertex(100,-20){5.66}
\SetWidth{1.0} \Line(100,-20)(70,-50)
\Line(100,-20)(130,-50)
\SetWidth{0.5} \Vertex(70,-50){5.66}
\Vertex(130,-50){5.66}
\put(90,-60){\em\huge$f$}
\put(140,-10){\em\huge$g$}
\SetWidth{1.0} \Line(140,20)(180,-10)
\SetWidth{0.5} \Vertex(180,-10){5.66}
\SetWidth{1.0} \Line(180,-10)(150,-50)
\Line(180,-10)(210,-50)
\SetWidth{0.5} \Vertex(150,-50){5.66}
\Vertex(210,-50){5.66}
\put(175,-60){\em\huge$h$}
\end{picture}}}}

\def\motzstand{{\scalebox{0.35}{ 
\begin{picture}(300,62)(-20,-68)
\SetColor{Black}
\SetWidth{0.5} \Vertex(-50,-63){5.66}
\SetWidth{1.0} \Line(-50,-63)(-20,-63)
\Vertex(-20,-63){5.66}
\put(-40,-55){\huge $x_1$}
\SetWidth{1.0} \Line(-20,-63)(10,-33)
\SetWidth{0.5} \Vertex(10,-33){5.66}
\SetWidth{1.0} \Line(10,-33)(40,-33)
\put(15,-22){\huge $x_2$}
\SetWidth{0.5} \Vertex(40,-33){5.66}
\SetWidth{1.0} \Line(40,-33)(70,-63)
\SetWidth{0.5} \Vertex(70,-63){5.66}
\SetWidth{1.0} \Line(70,-63)(100,-63)
\put(75,-52){\huge $x_3$}
\SetWidth{0.5} \Vertex(100,-63){5.66}
\SetWidth{1.0} \Line(100,-63)(130,-33)
\SetWidth{0.5} \Vertex(130,-33){5.66}
\SetWidth{1.0} \Line(130,-33)(160,-33)
\put(135,-22){\huge $x_4$}
\SetWidth{0.5} \Vertex(160,-33){5.66}
\SetWidth{1.0} \Line(160,-33)(190,-63)
\SetWidth{0.5} \Vertex(190,-63){5.66}
\SetWidth{1.0} \Line(190,-63)(220,-63)
\put(195,-52){\huge $x_5$}
\SetWidth{0.5} \Vertex(220,-63){5.66}
\SetWidth{1.0} \Line(220,-63)(250,-63)
\put(225,-52){\huge $x_6$}
\SetWidth{0.5} \Vertex(250,-63){5.66}
\end{picture}}}}

\def\rhsmotzstand{{\scalebox{0.35}{ 
\begin{picture}(700,62)(-120,-68)
\SetColor{Black}
\SetWidth{0.5} \Vertex(-100,-63){5.66}
\put(-75,-60){\huge $\circ$}
\SetWidth{0.5} \Vertex(-50,-63){5.66}
\SetWidth{1.0} \Line(-50,-63)(-20,-63)
\put(-40,-55){\huge $x_1$}
\Vertex(-20,-63){5.66}
\put(5,-60){\huge $\circ$}
\SetWidth{0.5} \Vertex(30,-63){5.66}

\SetWidth{1.0} \Line(30,-63)(60,-33)
\SetWidth{0.5} \Vertex(60,-33){5.66}
\SetWidth{1.0} \Line(60,-33)(90,-33)
\put(65,-22){\huge $x_2$}
\SetWidth{0.5} \Vertex(90,-33){5.66}
\SetWidth{1.0} \Line(90,-33)(120,-63)
\SetWidth{0.5} \Vertex(120,-63){5.66}
\put(135,-60){\huge $\circ$}

\SetWidth{0.5} \Vertex(170,-63){5.66}
\SetWidth{1.0} \Line(170,-63)(200,-63)
\put(175,-52){\huge $x_3$}
\SetWidth{0.5} \Vertex(200,-63){5.66}
\put(225,-60){\huge $\circ$}

\SetWidth{0.5} \Vertex(250,-63){5.66}
\SetWidth{1.0} \Line(250,-63)(280,-33)
\SetWidth{0.5} \Vertex(280,-33){5.66}
\SetWidth{1.0} \Line(280,-33)(310,-33)
\put(290,-22){\huge $x_4$}
\SetWidth{0.5} \Vertex(310,-33){5.66}
\SetWidth{1.0} \Line(310,-33)(340,-63)
\SetWidth{0.5} \Vertex(340,-63){5.66}
\put(365,-60){\huge $\circ$}

\SetWidth{0.5} \Vertex(390,-63){5.66}
\SetWidth{1.0} \Line(390,-63)(420,-63)
\put(395,-52){\huge $x_5$}
\SetWidth{0.5} \Vertex(420,-63){5.66}
\put(445,-60){\huge $\circ$}

\SetWidth{0.5} \Vertex(470,-63){5.66}
\put(495,-60){\huge $\circ$}

\SetWidth{0.5} \Vertex(520,-63){5.66}
\SetWidth{1.0} \Line(520,-63)(550,-63)
\put(525,-52){\huge $x_6$}
\SetWidth{0.5} \Vertex(550,-63){5.66}
\put(575,-60){\huge $\circ$}

\SetWidth{0.5} \Vertex(600,-63){5.66}
\end{picture}}}}



\def\lxtree{\includegraphics[scale=.7]{lxtree}}
\def\xdtree{\includegraphics[scale=.7]{xdtree}}
\def\xdtreend{\includegraphics[scale=.2]{xdtreend}} 
\def\xytree{\includegraphics[scale=.7]{xytree}}
\def\xytreend{\includegraphics[scale=.2]{xytreend}} 

\def\xdtreed{\includegraphics[scale=.3]{xdtreed}}
\def\xytreed{\includegraphics[scale=.3]{xytreed}}
\def\lxdtree{\includegraphics[scale=.7]{lxdtree}}
\def\dtree{\includegraphics[scale=0.5]{dtree}}
\def\xyleft{\includegraphics[scale=.7]{xyleft}}
\def\xyright{\includegraphics[scale=.7]{xyright}}
\def\yzdtree{\includegraphics[scale=.7]{yzdtree}}
\def\xyztreea{\includegraphics[scale=.7]{xyztree1}}
\def\xyztreeb{\includegraphics[scale=.7]{xyztree2}}
\def\xyztreec{\includegraphics[scale=.7]{xyztree3}}
\def\tableps{\includegraphics[scale=1.4]{tableps}}

\def\ccxtree{\includegraphics[scale=1]{ccxtree}}
\def\ccxtreedec{\includegraphics[scale=1]{ccxtreedec}}
\def\ccetree{\includegraphics[scale=1]{ccetree}}
\def\ccllcrcr{\includegraphics[scale=1]{ccllcrcr}}
\def\cclclcrcr{\includegraphics[scale=.5]{cclclcrcr}}
\def\ccllxryr{\includegraphics[scale=.5]{ccllxryr}}
\def\cclxlyrzr{\includegraphics[scale=1]{cclxlyrzr}}
\def\ccIII{\includegraphics[scale=1]{ccIII}}


\title{Algebraic Birkhoff decomposition and its applications}
\author{Li Guo}
\address{Department of Mathematics and Computer Science,
         Rutgers University,
         Newark, NJ 07102}
\email{liguo@rutgers.edu}



\begin{abstract}
Central in the Hopf algebra approach to the renormalization of perturbative quantum field theory of Connes and Kreimer
is their Algebraic Birkhoff Decomposition. In this tutorial article, we introduce their decomposition and prove it by the Atkinson Factorization in Rota-Baxter algebra. We then give some applications of this decomposition in the study of divergent integrals and multiple zeta values.
\end{abstract}


\maketitle

\tableofcontents

\delete{
\noindent
{\bf Key words: } Algebraic Birkhoff Decomposition, Hopf algebra, Rota-Baxter algebra, renormalization, multiple zeta values.
}
\setcounter{section}{0}
{\ }

\section{Introduction}
\mlabel{intro}
This paper is based on lecture series given at the International Instructional Conference: Langlands and Geometric Langlands Program, June 18-21, 2007 in Guangzhou and at the International School and Conference of Noncommutative Geometry, August 15-30, 2007 in Tianjin, China.
The purpose of this paper, as well as of the lecture series themselves, is to give a self-contained introduction to the Algebraic Birkhoff Decomposition and its related Rota-Baxter algebra, and to give some ideas on its applications, with graduate students and non-experts in mind.

The Algebraic Birkhoff Decomposition
of Connes and Kreimer is a fundamental result in their seminal work~\mcite{C-K1} on Hopf algebra approach to renormalization of perturbative quantum field theory. We briefly describe the related history. Readers not familiar with the physics terminology need not to be concerned as the terminology will not be needed in the rest of the paper. See Sections \mref{sec:int} and \mref{sec:mzv} for a more mathematical (but unprecise) discussion of renormalization.
\smallskip

The perturbative approach to quantum field theory (pQFT) is perhaps
the best experimentally confirmed physical theory in the realm of high energy
physics.
Basic phenomena in particle physics, such as collision, merging and emitting of particles, find an intuitive graphical representation in terms of Feynman
graphs. Amplitudes, for example, of these phenomena are given
by Feynman integrals read off from the sum of Feynman graphs
following a set of Feynman rules.

But these Feynman integrals are usually divergent.
So physicists established a process motivated by
physical insight to extract finite values from these divergent
integrals, after getting rid of a so-called counter-term. This
process is called renormalization of a pQFT~\mcite{Co}. Following
the hierarchical structure of the divergencies, one has to start
with the counter-term related to the inner most divergence, and
expand the process outwards order by order. The algorithm is described by the well-known Bogoliubov formula (Zimmermann's forest formula)~\mcite{Zi}.
Despite its great success in physics, this process was well-known for its lack of a solid mathematical
foundation. The Feynman graphs appeared to be
unrelated to any mathematical structure that might underlie the renormalization prescription.

Such a structure was uncovered in a series of papers by Connes and Kreimer~\mcite{C-K1,CK2,Kr}.
Let $\FG$ be the set of one point irreducible (1PI) Feynman graphs in a renormalizable QFT.
Connes and Kreimer defined a connected filtered Hopf algebra structure on $\calh_\FG=\CC[\FG]$.
Let $\CC[\vep^{-1},\vep]]$ be the ring of Laurent series.
Then a set of Feynman rules, together with a dimensional regularization
applied to the Feynman integrals, amounts to an algebra
homomorphism  $\phi: \calh_\FG \to \CC[\vep^{-1},\vep]].$
Then they showed that $\phi$ factors, in analogue to the Birkhoff decomposition of a loop map, into a product of
algebraic homomorphisms
$\phi_+: \calh_\FG \to \CC[[\vep]]$ and
$\phi_-: \calh_\FG \to \CC[\vep^{-1}].$
The renormalized value of a Feynman integral corresponding to a Feynman graph $\Gamma$ is given by $\phi_+(\Gamma)$ evaluated at $\vep=0$.
\smallskip

This decomposition establishes a bridge that allows the exchange of ideas between physics and mathematics. In one direction, the decomposition provides the renormalization of quantum field theory with a mathematical foundation which was missing before, opening the door of further mathematical understanding of renormalization. Recently, the related Riemann-Hilbert correspondence and motivic Galois groups were studied by Connes and Marcolli~\mcite{CMa}, and motivic properties of Feynman graphs and integrals were studied by Bloch, Esnault and Kreimer~\mcite{BEK}.
In the other direction, the mathematical formulation of renormalization provided by this decomposition allows the method of renormalization dealing with divergent Feynman integrals in physics to be applied to divergent problems in mathematics that could not be dealt with in the past, such as the divergence in multiple zeta values~\mcite{G-Z,G-Z2,Zh2} and Chen symbol integrals~\mcite{M-P1,M-P2}.

This decomposition also links the physics theory of renormalization to an area of mathematics, namely Rota-Baxter algebra, that has evolved in parallel to the development of QFT renormalization for several decades.

The introduction of Rota-Baxter algebra by G. Baxter~\mcite{Ba} in 1960 was motivated by Spitzer's identity~\mcite{Sp} that appeared in 1956 and which was regarded as a remarkable formula in the
fluctuation theory of probability. Soon Atkinson~\mcite{At} proved a simple yet useful factorization theorem in Rota-Baxter algebras.
The identity of Spitzer took its algebraic form through the work of Cartier, Rota and Smith~\mcite{Ca,R-S} (1972).

This was during the same period when the renormalization theory of pQFT was developed, through the the work of Bogoliubov and Parasiuk~\mcite{BP} (1957), Hepp~\mcite{He}(1966) and Zimmermann~\mcite{Zi} (1969), later known as the BPHZ prescription.

Recently QFT renormalization and Rota-Baxter algebra are tied together through the algebraic formulation of Connes and Kreimer for the former and a generalization of classical results in the latter~\mcite{EGK1,E-G-K2,E-G-K3}. More precisely, generalizations of Spitzer's identity and Atkinson factorization give the twisted antipode formula and \ABD in the work of Connes and Kreimer.
\smallskip

We will start with a review in Section~\mref{sec:abd} of Hopf algebras in order to state the \ABD. In Section~\mref{sec:rb}, we discuss Atkinson Factorization and Spitzer's identity in Rota-Baxter algebra, and their generalizations for complete filtered Rota-Baxter algebras. We then derive \ABD and give its explicit form from these generalizations. In Section~\mref{sec:int}, we illustrate how \ABD can be applied to give renormalized values of a system of divergent integrals. In Section~\mref{sec:mzv}, we apply \ABD to the study of divergent multiple zeta values.

\medskip

\noindent
{\bf Acknowledgements.} The author thanks the organizers of the two conferences, especially Lizhen Ji of the Guangzhou conference, and Matilde Marcolli and Guoliang Yu of the Tianjin conference for their invitations. Thanks also go to the hosting institutes, South China University of Technology and Chern Institute of Mathematics at Nankai University, for their hospitality and to the participants for their interest. He acknowledges the support of NSF grant DMS 0505445 and appreciates the detailed comments of the referee.

\section{The Algebraic Birkhoff Decomposition}
\mlabel{sec:abd}
In this section, we provide the necessary background on Hopf algebra to state the \ABD. We will not go into its physics applications for which we refer the reader to the original papers~\mcite{C-K0,C-K1,CK2} of Connes and Kreimer and the survey articles such as~\mcite{EGsu,Ma}.

\subsection{Bialgebra}
\mlabel{sec:bial}
We start with a review of bialgebras and Hopf algebras. Further details can be found in
the lecture notes~\mcite{Ca3,Sch}, as well as the
standard references~\mcite{Ab,Ka,Sw}.

In this paper, a ring or algebra is assumed to be unitary unless otherwise specified. Let $\bfk$ denote the commutative ring on which all algebras and modules are based. All tensor products are also taken over $\bfk$.

\subsubsection{Algebra by diagrams}
\mlabel{sec:algd}
The concept of Hopf algebra originated from the work of Hopf~\mcite{Ho} on manifolds and developed further in topology~\mcite{MM} and representation theory~\mcite{Ab,Hoc} in the 1950-1970s. Other than its theoretical study~\mcite{Ab,Sw}, Hopf algebra found connections with combinatorics~\mcite{JR} and quantum groups~\mcite{Dr} in the 1980s. However, unlike the immediate success in quantum groups, the connection with combinatorics was largely ignored until the 1990s when Hopf algebras of combinatorial nature found applications in number theory~\mcite{Ho2}, noncommutative geometry~\mcite{CMo} and quantum field theory~\mcite{C-K0}.

The structure of a Hopf algebra is built from a compatible pair of the dual structures of an algebra and a coalgebra. In combinatorial terms, in analogy to the product in an algebra that puts two elements together to form a more complicated element, the coproduct in a coalgebra decomposes an elements into pairs of simpler elements.
To motivate the precise definition of a coalgebra, we give
the following interpretation of algebra in terms of
commutative diagrams.

A $\bfk$-algebra $A$ can be equivalently defined as a $\bfk$-module $A$ together with $\bfk$-module homomorphisms
$\mult=\mult_A:A\otimes_\bfk A\to A$ and
$\un=\un_A: \bfk\to A$ such that the following diagrams commute.
\begin{equation}
{\bf (Associativity)}\qquad \xymatrix{ A\ot A \ot A
\ar[rr]^{\mult\ot \id_A}
    \ar[d]_{\id_A\ot \mult} && A\ot A \ar[d]^\mult\\
A\ot A \ar[rr]^\mult && A } \mlabel{eq:ass}
\end{equation}
\begin{equation}
{\bf (Unit)} \qquad \xymatrix{ \bfk \ot A\ar[r]^{\un\ot \id_A}
\ar[rd]_{\alpha_\ell} & A\ot A \ar[d]^\mult & A\ot \bfk
\ar[l]_{\id_A\ot \un}
\ar[ld]^{\alpha_r} \\
& A & } \mlabel{eq:unit}
\end{equation}
Here $\alpha_\ell$ (resp. $\alpha_r$) is the
isomorphism sending $k \ot a$ (resp. $a\ot k$) to $ka$.

Let $(A,\mult_A,\un_A)$ and $(A',\mult_{A'},\un_{A'})$ be two $\bfk$-algebras. An algebra homomorphism from $A$ to $A'$ can be equivalently defined to be a $\bfk$-module homomorphism
$f: A\to A'$ such that the following diagrams commute.
\begin{equation}
\xymatrix{ A\ot A \ar^{\mult_A}[rr]
\ar^{f\ot f}[dd] && A \ar_{f}[dd]\\
&& \\
A'\ot A' \ar^{\mult_{A'}}[rr] && A'
}
\qquad
\xymatrix{ & A \ar_{f}[dd] \\
\bfk \ar^{\un_A}[ur] \ar_{\un_{A'}}[dr] & \\
& A'
}
\mlabel{eq:hom}
\end{equation}

\subsubsection{Coalgebra}
A {\bf $\bfk$-coalgebra} is obtained by reversing the arrows in the
diagrams (\mref{eq:ass}) and (\mref{eq:unit}) of a
$\bfk$-algebra. More precisely, a $\bfk$-coalgebra is a triple
$(C,\Delta,\varepsilon)$ where $C$ is a $\bfk$-module, $\Delta:
C\to C\otimes C$ and $\varepsilon: C\to \bfk$ are $\bfk$-linear
maps that make the following diagrams commute.

\begin{equation}
{\bf (Coassociativity)} \qquad \xymatrix{
    C \ar[rr]^{\Delta} \ar[d]_{\Delta} && C\otimes C
    \ar[d]^{\id_C\otimes \Delta}\\
    C\otimes C \ar[rr]^{\Delta\otimes \id_C} && C\otimes C\otimes
    C
} \mlabel{coass}
\end{equation}
\begin{equation}
{\bf (Counit)} \qquad \xymatrix{\bfk \otimes C & C\ot C
\ar[l]_{\vep\otimes \id_C}
    \ar[r]^{\id_C\otimes \vep}
& C\otimes \bfk \\
& C \ar[lu]^{\beta_\ell} \ar[u]_{\Delta} \ar[ru]_{\beta_r} & }
\mlabel{coun}
\end{equation}
Here $\beta_\ell$ (resp. $\beta_r$) is the isomorphism sending
$a\in C$ to $1\ot a$ (resp. $a\ot 1$).

The $\bfk$-algebra $\bfk$ has a natural structure of a
$\bfk$-coalgebra with
\[ \Delta_\bfk: \bfk\to \bfk\otimes \bfk,\ c\mapsto c\otimes 1,\ c\in \bfk\]
and
\[ \vep_\bfk=\id_\bfk: \bfk\to \bfk. \]
We also denote the multiplication in $\bfk$ by $\mult_\bfk$.

For $a\in C$, we have $\Delta(a)=\sum_i a_{1\, i}\ot a_{2\,i}$
which we denote simply by $\sum_{(a)} a_{(1)}\ot a_{(2)}$ (Sweedler's notation).

\subsubsection{Bialgebra}
By superimposing a compatible pair of algebra and coalgebra, we obtain a bialgebra.
More precisely, a $\bfk$-{\bf bialgebra} is a quintuple
$(H,\mult,\un,\Delta,\vep)$ where $(H,\mult,\un)$ is a
$\bfk$-algebra and $(H,\Delta,\vep)$ is a $\bfk$-coalgebra such
that $\Delta:H\to H\ot H$ and $\vep: H\to \bfk$ are morphisms of $\bfk$-algebras, where $H\ot H$ is the
tensor product algebra with the product
$$\mult_{H\ot H} (a_1\ot b_1,a_2\ot b_2)=(a_1b_1)\ot (a_2b_2).$$
In terms of the diagram interpretation of algebra homomorphisms in Eq.~(\mref{eq:hom}), this amounts to the commutativity of the following diagrams.

\begin{equation}
\xymatrix{ H\ot H \ar^{\mult_H}[rr]
\ar^{\Delta\ot \Delta}[d] && H \ar^{\Delta}[d]\\
(H\ot H)\ot (H\ot H) \ar^(0.6){\mult_{H\ot H}}[rr] && H\ot H
}
\mlabel{eq:md0}
\end{equation}
which is more often rewritten as

\begin{equation}
\xymatrix{
 H\otimes H \ar[rr]^{\mult_H}
 \ar[d]_{\scriptstyle{(\id\otimes\tau\otimes \id)(\Delta\otimes
\Delta)}}  & & H \ar[d]^{\Delta}\\
H\otimes H\otimes H\otimes H \ar[rr]^(0.6){\mult_H\otimes \mult_H} &&
    H\otimes H
} \mlabel{md}
\end{equation}
where $\tau:=\tau_{H,H}: H\otimes H\to H\otimes H$ is defined by
$\tau_{H,H}(x\otimes y)=y\otimes x$;

\begin{equation}
\xymatrix{ H\otimes H  \ar[rr]^{\mult_H} \ar[d]_{\vep\ot \vep} & &
H \ar[d]^{\vep}\\
\bfk \ot \bfk \ar[rr]^{\mult_\bfk} && \bfk } \mlabel{me}
\end{equation}

\begin{equation}
\xymatrix{ & H \ar^{\Delta}[dd] \\
\bfk \ar^{\un_H}[ur] \ar_{\un_{H\ot H}}[dr] & \\
& H\ot H
}
\end{equation}
which is often rewritten as
\begin{equation}
\xymatrix{
\bfk \ar[rr]^{\un_H} \ar[d]_{\Delta_\bfk} && H \ar[d]^{\Delta}\\
\bfk\otimes \bfk \ar[rr]^{\un_H \otimes \un_H} && H\otimes H }
\mlabel{ed}
\end{equation}

\begin{equation}
\xymatrix{
& H \ar^{\vep}[dd] \\
\bfk \ar[ru]^{\un_H} \ar[rd]_{u_\bfk=\id_\bfk} &  \\
& \bfk  } \mlabel{ee}
\end{equation}

\subsubsection{Examples}
\mlabel{sss:exam}
Here are some simple examples of bialgebras. Further examples will be given in Section~\mref{sec:int} and \mref{sec:mzv}.

The {\bf divided power bialgebra} is defined by the quintuple
$(H,\mult,\un,\Delta,\vep)$ where
\begin{enumerate}
\item $H$ is the free $\bfk$-module $\bigoplus_{n=0}^\infty \bfk
a_n$ with basis $a_n$, $n\geq 0$; \item $\mult: H\otimes H \to H,
a_m\otimes a_{n} \mapsto \binc{m+n}{m}a_{m+n}$; \item $\un: \bfk
\to H,\ \bfone \mapsto a_{0}$; \item $\Delta: H\to H\otimes H,
a_{n}\mapsto \sum_{k=0}^{n}  a_{k}\otimes  a_{n-k}$; \item $\vep:
H\to \bfk,\ a_{n} \mapsto \delta_{0,n} \bfone$ where
$\delta_{0,n}$ is the Kronecker delta.
\end{enumerate}
The reader is invited to verify that this is a bialgebra.
\medskip

More general than the divided power bialgebra is the {\bf shuffle
product bialgebra} $Sh(V)$. (See~\mcite{A-G-K-O} for another generalization.)
Let $V$ be a free $\bfk$-module. Define $V^{\ot n}$ to be the
$n$-th tensor power of $V$:
$$ V^{\ot n} = V\underbrace{\ot \cdots \ot}_{n-{\rm factors}} V$$
with the convention that $V^{\ot 0}=\bfk$. Let
$$Sh(V)=\bigoplus_{n\geq 0} V^{\ot n}.$$

A shuffle of $a_1\otimes \ldots \otimes a_m$ and $b_1\otimes
\ldots \otimes b_n$ is a tensor list of $a_i$ and $b_j$ without
change the order of the $a_i$s and $b_j$s. The shuffle product
$(a_1\otimes \ldots \otimes a_m)\shpr (b_1\otimes \ldots \otimes
b_n)$ is the sum of shuffles of $a_1\otimes \ldots \otimes a_m$
and $b_1\otimes \ldots \otimes b_n$.
For example,
$$a_1 \shpr (b_1\otimes b_2) = a_1\otimes b_1\otimes b_2 +
b_1\otimes a_1\otimes b_2 + b_1\otimes b_2\otimes a_1.$$
Then
$\shpr$ is an associative product on $Sh(V)$, making $Sh(V)$ into
a $\bfk$-algebra with the unit $\un:\bfk\to A$ given by
$\un(k)=k\in \bfk \subseteq Sh(V)$.

Define a coproduct $\Delta: Sh(V)\to Sh(V)\ot Sh(V)$ by
\begin{eqnarray*}
\Delta(a_1\ot \cdots \ot a_n)&=&1\ot (a_1\ot \cdots \ot a_n)+
a_1 \ot (a_2\ot \cdots \ot a_n) + \cdots \\
&& + (a_1\ot \cdots a_{n-1})\ot a_n + (a_1\ot \cdots \ot a_n)\ot
1,\ n\geq 1
\end{eqnarray*}
and $\Delta(k)=k \bfone \ot \bfone$. Define a counit $\vep:
Sh(V)\to \bfk$ by
$$ \vep(a_1\ot \cdots \ot a_n)=0, n\geq 1,\ {\rm\ and\ }\
\vep(\bfone)=\bfone.$$
We then obtain a bialgebra. It is easy to see that the divided power bialgebra is the special case when $V$ a free $\bfk$-module of rank one.

\subsection{Connected bialgebra and Hopf algebra}
\mlabel{sec:conn}
The main goal of this section is show that a connected bialgebra
is automatically a Hopf algebra.
\subsubsection{Connected bialgebra}
\begin{defn}
{\rm
A bialgebra $H$ is called a {\bf graded bialgebra} if there are
$\bfk$-submodules $H_n,\ n\geq 0,$ of $H$ such that
\begin{enumerate}
\item $H_p H_q\subseteq H_{p+q}$; \item $\Delta(H_n) \subseteq
\bigoplus_{p+q=n} H_p\ot H_q.$
\end{enumerate}
Elements of $H_n$ are given degree $n$. $H$ is called {\bf
connected} if $H_0 =\im\, u (=\bfk)$.
}
\end{defn}
The term connected comes from the fact that the cohomology
bialgebra of a connected compact Lie group is a connected
bialgebra.
\begin{defn}
{\rm
A bialgebra $H$ is called a {\bf filtered bialgebra} if there are
$\bfk$-submodules $H^{(n)},\ n\geq 0$ of $H$ such that
\begin{enumerate}
\item $H^{(n)}\subseteq H^{(n+1)}$; \item $\cup_{n\geq 0} H^{(n)}
= H$; \item $H^{(p)} H^{(q)}\subseteq H^{(p+q)}$; \item
$\Delta(H^{(n)}) \subseteq \bigoplus_{p+q=n} H^{(p)}\ot H^{(q)}.$
\end{enumerate}
$H$ is called {\bf connected} if $H^{(0)}=\im\, \un (=\bfk)$.
}
\end{defn}
Obviously, a graded bialgebra is a filtered bialgebra with
filtration defined by $H^{(n)}=\sum_{k\leq n} H_k$.
We introduce a concept between a graded bialgebra and a filtered bialgebra that is suitable for our later applications.

\begin{defn}
{\rm
A bialgebra $H$ is called a {\bf connected, filtered, cograded bialgebra} if there are
$\bfk$-submodules $H_n,\ n\geq 0$ of $H$ such that
\begin{enumerate}
\item $H_p H_q\subseteq \sum_{k\leq p+q}H_k$;
\item $\Delta(H_n) \subseteq
\bigoplus_{p+q=n} H_p\ot H_q.$
\item $H_0 =\im\, u (=\bfk)$.
\end{enumerate}
}
\end{defn}

In the following we will only consider connected filtered cograded bialgebras. For connected filtered bialgebras, see~\mcite{F-G1}.

We note that $e:=\un \vep (:=\un \circ \vep)$ has the property that
 $e(\bfone_H)=\bfone_H$ and $e(x)=0$ for any
 $x\in \ker\vep_H$.
So the map $e:=\un \vep:H\to H$ is an idempotent. Therefore
$$H=\im (\un\vep)\oplus\ker (u\eta)
=\im \un \oplus \ker\vep =\bfk \bfone_H\oplus \ker\vep.$$ In fact,
$\ker\, \vep = \oplus_{n>0}H_n$.

\begin{theorem}
Let $H$ be a connected filtered cograded bialgebra. For any $x\in H_n$, the
element $\tilde{\Delta}(x):=\Delta(x)-x\ot 1-1\ot x$ is in
$\oplus_{p+q=n,p>0,q>0} H_p\ot H_q$.
 The map $\tilde{\Delta}$ is
coassociative on $\ker \vep$. \mlabel{eq:conn} \mlabel{thm:conn}
\end{theorem}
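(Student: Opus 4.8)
The plan is to establish the two assertions in turn. For the first I would use only the cograded inclusion $\Delta(H_n)\subseteq\bigoplus_{p+q=n}H_p\ot H_q$, the connectedness $H_0=\bfk\,1$, and the counit axiom~\mref{coun}; for the second, the coassociativity~\mref{coass} of $\Delta$ together with the decomposition produced in the first part.

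Fix $n\geq 1$ and $x\in H_n$, so that $\vep(x)=0$ because $H_n\subseteq\ker\vep$, and write $\Delta(x)=\sum_{p+q=n}\Delta(x)_{p,q}$ with $\Delta(x)_{p,q}\in H_p\ot H_q$. The first step is to isolate the two extreme summands. Applying $\vep\ot\id$ and invoking the counit axiom gives $(\vep\ot\id)\Delta(x)=x$; but $\vep$ annihilates every $H_p$ with $p>0$, so only the $p=0$ term survives, and since $H_0=\bfk\,1$ we may write $\Delta(x)_{0,n}=1\ot z$ with $z\in H_n$, which forces $z=(\vep\ot\id)(1\ot z)=x$. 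Hence $\Delta(x)_{0,n}=1\ot x$, and symmetrically $\id\ot\vep$ yields $\Delta(x)_{n,0}=x\ot 1$. As $n\geq 1$ these are distinct summands, so $\tilde{\Delta}(x)=\Delta(x)-x\ot 1-1\ot x=\sum_{p+q=n,\,p,q>0}\Delta(x)_{p,q}$ lies in $\bigoplus_{p+q=n,\,p,q>0}H_p\ot H_q$; in particular $\tilde{\Delta}(x)\in\ker\vep\ot\ker\vep$.

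For coassociativity on $\ker\vep$, the idea is to substitute $\Delta(y)=y\ot 1+1\ot y+\tilde{\Delta}(y)$, valid for $y\in\ker\vep$ by the first part and supplemented by $\Delta(1)=1\ot 1$, into the two sides of~\mref{coass}. Writing $\tilde{\Delta}(x)=\sum x'\ot x''$ with $x',x''\in\ker\vep$ and expanding $(\Delta\ot\id)\Delta(x)$ and $(\id\ot\Delta)\Delta(x)$, each side splits into a fully reduced term---namely $(\tilde{\Delta}\ot\id)\tilde{\Delta}(x)$ on the left and $(\id\ot\tilde{\Delta})\tilde{\Delta}(x)$ on the right---plus six \emph{mixed} terms, each carrying at least one tensor factor equal to $1$. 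A termwise comparison shows that these six mixed contributions agree on the two sides (for instance $\sum x'\ot x''\ot 1$, $\sum x'\ot 1\ot x''$ and $\sum 1\ot x'\ot x''$ appear in both), so upon equating the two sides via~\mref{coass} they cancel, leaving exactly $(\tilde{\Delta}\ot\id)\tilde{\Delta}(x)=(\id\ot\tilde{\Delta})\tilde{\Delta}(x)$, which is the asserted coassociativity.

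The argument is elementary, and the only step demanding genuine care is the bookkeeping of the mixed terms in the second part: one must verify that all six match across the two sides so that precisely the reduced terms survive. To sidestep the case-by-case matching I would, alternatively, record that $\tilde{\Delta}=(\pi\ot\pi)\Delta$ on $\ker\vep$, where $\pi=\id-\un\vep$ is the projection of $H$ onto $\ker\vep$ along $\bfk\,1$, and track how the outer projection kills any term containing a factor $1$; but the explicit expansion already makes the cancellation transparent.
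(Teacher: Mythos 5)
Your proposal is correct and takes essentially the same route as the paper's own proof: both isolate the components $x\ot \bfone_H$ and $\bfone_H\ot x$ of $\Delta(x)$ using cogradedness, connectedness ($H_0=\bfk\,\bfone_H$) and the counit axiom, and both obtain coassociativity of $\tilde{\Delta}$ by substituting $\Delta(y)=y\ot 1+1\ot y+\tilde{\Delta}(y)$ into the two sides of the coassociativity of $\Delta$ and cancelling the six matching mixed terms. Your closing remark about the projection $\pi=\id-\un\,\vep$ is a harmless reformulation, not a different argument.
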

We will use the short hand notations:
$\tilde{\Delta}(x)=\sum_{(x)} x' \ot x'',
\tilde{\Delta}^2(x)=\sum_{(x)} x' \ot x'' \ot x'''.$
The later makes sense thanks to Theorem~\mref{thm:conn}.

\begin{proof}
By assumption, we have
$$\Delta(x)=y\ot \bfone_H +\bfone_H \ot z + w$$
with $w\in \oplus_{p+q=n,p>0,q>0} H^{(p)}\ot H^{(q)}$. By the
counicity of $\Delta$ in Eq.~(\mref{coun}), we have
$$x=\beta_\ell\,(\vep\ot \id_H)\,\Delta(x)=z$$
and
$$x=\beta_r\,(\id_H\ot \vep)\,\Delta(x)=y.$$

To verify the coassociativity of $\tilde{\Delta}$, we note that,
for $x\in H_n$, \allowdisplaybreaks{
\begin{align*}
(\Delta\ot \id_H)\Delta(x) &= (\Delta\ot \id_H) (x\ot
\bfone_H+\bfone_H\ot x +
\sum_{(x)}x'\ot x'')\\
&=\Delta(x)\ot \bfone_H +\Delta(\bfone_H)\ot x
+\sum_{(x)}\Delta(x') \ot x''\\
&= (x\ot \bfone_H +\bfone_H\ot x +\tilde{\Delta}(x))\ot \bfone_H
+\bfone_H\ot \bfone_H\ot x \\
&+ \sum_{(x)} \bigg ( \sum_{(x')}\bigg(x'\ot \bfone_H
+\bfone_H\ot x'+\tilde{\Delta}(x')\bigg)\bigg) \ot x''\\
&=(x\ot \bfone_H +\bfone_H\ot x +\tilde{\Delta}(x))\ot \bfone_H
+\bfone_H\ot \bfone_H\ot x \\
&+ \sum_{(x)} \sum_{(x')}\bigg(x'\ot \bfone_H \ot x'' +\bfone_H\ot
x' \ot x''\bigg) +(\tilde{\Delta}\ot
\id_H)\tilde{\Delta}(x)\\
&=x\otimes 1\otimes 1 +1\otimes x\otimes 1
                        +1\otimes 1\otimes x    \\
                &\ +\sum_{(x)} \big (
x'\otimes x''\otimes 1 + x'\otimes 1\otimes x'' + 1\otimes
x'\otimes
x''\big ) \\
           &\ +(\tilde{\Delta}\otimes \id_H)\tilde{\Delta} (x).
\end{align*}}
Similarly,
\begin{align*}
(\id_H\ot \Delta)\Delta(x) &= x\otimes 1\otimes 1 +1\otimes
x\otimes 1
                        +1\otimes 1\otimes x    \\
                &\ +\sum_{(x)} \big (
x'\otimes x''\otimes 1 + x'\otimes 1\otimes x'' + 1\otimes
x'\otimes
x''\big ) \\
           &\ +(\id_H\ot \tilde{\Delta})\tilde{\Delta} (x).
\end{align*}
So the coassociativity of $\tilde{\Delta}$ follows from that of
$\Delta$.
\end{proof}

\subsubsection{Convolution product}
\mlabel{sss:conv}
For a $\bfk$-algebra $A$ and a $\bfk$-coalgebra $C$, we define the
\textbf{convolution} of two linear maps $f,g$ in $\Hom(C,A)$ to be
the map $f*g\in\Hom(C,A)$ given by the composition
$$
C \xrightarrow{\Delta} C \ot C \xrightarrow{f\ot g} A \ot A
\xrightarrow{\mult} A.
$$
In other words,
$$
(f*g)(a) = \sum_{(a)} f(a_{(1)}) \, g(a_{(2)}).
$$

We next define a metric on $\Hom(C,A)$ when $C$ is connected filtered.
\begin{defn}
A {\bf filtered $\bfk$-algebra} is a $\bfk$-algebra $R$ together
with a decreasing filtration $R_n, \: n\geq 0, $ of nonunitary
subalgebras such that
$$
\bigcup_{n\geq 0} R_n = R, \quad R_n R_m \subseteq R_{n+m}.
$$
\end{defn}
It immediately follows that $R_0=R$ and each $R_n$ is an ideal of
$R$.

In a filtered $\bfk$-algebra $R$, we can use the subsets $\{R_n\}$
to define a metric on $R$ in the standard way. More precisely, for $x\in
R$, define
\begin{equation}
 o(x)=\left \{ \begin{array}{ll} \max \{k\ \big |\ x\in R_k\},& x\notin \cap_n R_n,\\
    \infty,& {\rm otherwise} \end{array} \right.
\mlabel{eq:metric}
\end{equation}
    and, for $x,y\in R$,
$$ \|y-x\|=2^{-o(y-x)}.$$
A filtered algebra is called {\bf complete} if $R$ is a complete
metric space with $\|y-x\|$, that is, every Cauchy sequence in $R$
converges. Equivalently, a filtered $\bfk$-algebra $R$ with
$\{R_n\}$ is complete if $\cap_n R_n = 0$ and if the resulting
embedding
$$
R \to \bar{R}:= \varprojlim R/R_n
$$
of $R$ into the inverse limit is an isomorphism.

When $R$ is a complete filtered $\QQ$-algebra, the functions
\begin{align}
\exp:& R_1 \to 1+R_1,\ \exp(a):=\sum_{n=0}^\infty \frac{a^n}{n!},
\mlabel{eq:exp} \\
\log:& 1+R_1 \to R_1,\ \log(1+a):=-\sum_{n=1}^\infty
\frac{(-a)^n}{n} \mlabel{eq:log}
\end{align}
are well-defined.

We also record the following simple fact for later reference.
\begin{lemma}
Let $R$ be a complete filtered algebra. The subset $1+R_1$ is a
group under the multiplication. \mlabel{lem:inv}
\end{lemma}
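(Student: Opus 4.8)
The plan is to verify the group axioms directly; associativity and the presence of the identity $1=1+0$ are immediate, so the only substantial point is the existence of inverses inside $1+R_1$. First I would record that $1+R_1$ is closed under multiplication: for $a,b\in R_1$ one has $(1+a)(1+b)=1+(a+b+ab)$ with $a+b+ab\in R_1$, since $ab\in R_2\subseteq R_1$. I would also note the estimate $o(xy)\geq o(x)+o(y)$, which follows from $R_nR_m\subseteq R_{n+m}$ and yields $\|xy\|\leq \|x\|\,\|y\|$ for the metric of Eq.~(\mref{eq:metric}); this makes multiplication continuous, a fact used repeatedly below.

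For inverses, given $a\in R_1$ I would take as candidate the geometric series
\[
b=\sum_{n=0}^\infty (-a)^n .
\]
Since $a\in R_1$ gives $(-a)^n\in R_n$, the partial sums $S_N=\sum_{n=0}^N(-a)^n$ satisfy $S_M-S_N\in R_{N+1}$ for $M>N$, so that $\|S_M-S_N\|\leq 2^{-(N+1)}$ and $(S_N)$ is Cauchy. Completeness of $R$ then produces the limit $b=\lim_N S_N$ in $R$. Because each term $(-a)^n$ with $n\geq 1$ lies in $R_1$, and $R_1$ is closed in the metric topology (if $x_k\to x$ with $x_k\in R_1$ then $o(x-x_k)\to\infty$, so $x-x_k\in R_1$ eventually, whence $x\in R_1$), the limit obeys $b-1\in R_1$, i.e.\ $b\in 1+R_1$.

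It then remains to check that $b$ is a two-sided inverse of $1+a$. For this I would use the telescoping identity
\[
(1+a)\,S_N=1+(-1)^N a^{N+1},
\]
obtained by expanding and cancelling. The error term $(-1)^N a^{N+1}\in R_{N+1}$ has norm at most $2^{-(N+1)}$, so $(1+a)S_N\to 1$; on the other hand, continuity of multiplication gives $(1+a)S_N\to (1+a)b$, whence $(1+a)b=1$. The symmetric computation $S_N(1+a)=1+(-1)^N a^{N+1}$ yields $b(1+a)=1$, so $b=(1+a)^{-1}\in 1+R_1$, completing the proof that $1+R_1$ is a group.

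The main obstacle is the analytic passage to the limit: everything hinges on completeness (to produce $b$) together with the continuity of multiplication coming from $\|xy\|\leq\|x\|\,\|y\|$, without which one could not pass from the finite telescoping identity to $(1+a)b=1$. I would emphasize that no hypothesis beyond completeness is required here; in particular, unlike the $\exp/\log$ maps of Eqs.~(\mref{eq:exp})--(\mref{eq:log}), this construction does not need $R$ to be a $\QQ$-algebra.
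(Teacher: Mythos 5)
Your proof is correct and follows essentially the same route as the paper: the paper writes an element of $1+R_1$ as $1-a$ and exhibits the geometric series $\sum_{n\geq 0}a^n$ as a two-sided inverse, exactly your construction up to a sign convention. The only difference is that you spell out the details the paper leaves implicit (Cauchyness of the partial sums, closedness of $R_1$, continuity of multiplication, and the telescoping identity), all of which are verified correctly.
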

\begin{proof}
An element of $1+R_1$ is of the form $1-a$ with $a\in R_1$. Thus
$\sum_{n=0}^\infty a^n$ is a well-defined element in $1+R_1$ and
$$ (1-a) \big(\sum_{n=0}^\infty a^n\big)= \big(\sum_{n=0}^\infty a^n \big)(1-a)=1.$$
\end{proof}

\begin{theorem}
Let $C$ be a coalgebra and $A$ an algebra. Let $e=\un_A\vep_C$.
\begin{enumerate}
\item The triple $(\Hom(C,A),*,\un_A\vep_C)$ is an algebra.
\mlabel{it:dual} \item Let $H=\cup_{n\geq 0} H^{(n)}$ be a connected
filtered bialgebra. Let $R=\Hom(H,A)$. Define
$$ R_n=\{f\in \Hom(H,A) \Big |\ f(H^{(n-1)})=0 \}$$
for $n\geq 0$ with the convention that $H^{(-1)}=\emptyset$. Then
$(R,R_n)$ is a complete algebra. \mlabel{it:comdual} \item
Under the same hypotheses as \mref{it:comdual}, the set
$G=\{f\in\Hom(H,A)\Big|\ f(\bfone_H)=\bfone_A\}$ endowed with the
convolution product is a group. \mlabel{it:gp}
\end{enumerate}
\mlabel{thm:inv}
\end{theorem}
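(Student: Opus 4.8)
The plan is to treat the three parts in order, each building on the previous. For part~\mref{it:dual} I would verify directly from the axioms that $*$ is an associative, bilinear product with two-sided unit $e=\un_A\vep_C$. Associativity reduces to the identity
$$((f*g)*h)(a)=\sum_{(a)} f(a_{(1)})\,g(a_{(2)})\,h(a_{(3)})=(f*(g*h))(a),$$
which holds because the two iterated coproducts $(\Delta\ot\id_C)\Delta$ and $(\id_C\ot\Delta)\Delta$ coincide by coassociativity (Eq.~(\mref{coass})) while $\mult_A$ is associative. That $e$ is a unit follows from the counit axiom (Eq.~(\mref{coun})): $(e*f)(a)=\sum \vep_C(a_{(1)})\,\bfone_A\,f(a_{(2)})=f\big(\sum \vep_C(a_{(1)})a_{(2)}\big)=f(a)$, and symmetrically on the right. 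Bilinearity over $\bfk$ is immediate.

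For part~\mref{it:comdual} I would first check that $(R,\{R_n\})$ is a filtered algebra and then that it is complete. The inclusions $H^{(n-1)}\subseteq H^{(n)}$ give $R_{n+1}\subseteq R_n$, and the convention $H^{(-1)}=\emptyset$ gives $R_0=R$, so $\bigcup_n R_n=R$. The key multiplicativity $R_n*R_m\subseteq R_{n+m}$ is the one computation I would spell out: for $x\in H^{(n+m-1)}$ with $f\in R_n,\ g\in R_m$, every term $x_{(1)}\ot x_{(2)}$ of $\Delta(x)$ lies in some $H^{(p)}\ot H^{(q)}$ with $p+q=n+m-1$ by the filtered-bialgebra axiom; since $p\ge n$ and $q\ge m$ would force $p+q\ge n+m$, one always has $p\le n-1$ or $q\le m-1$, whence $f(x_{(1)})=0$ or $g(x_{(2)})=0$ and thus $(f*g)(x)=0$. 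Completeness then has two ingredients: $\cap_n R_n=0$, because a map vanishing on every $H^{(n)}$ vanishes on $H=\bigcup_n H^{(n)}$; and the isomorphism $R\xrightarrow{\sim}\varprojlim R/R_n$, which I would obtain by observing that $R/R_n$ is computed by restriction to $H^{(n-1)}$, so a compatible family of classes is represented by maps agreeing on each $H^{(n)}$ and therefore glues to a single well-defined $\bfk$-linear map on all of $H$.

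Part~\mref{it:gp} is where completeness pays off, and the clean device I would use is the identification $G=e+R_1$. Indeed $R_1=\{f\mid f(\bfone_H)=0\}$ since $H^{(0)}=\bfk\bfone_H$, so $f(\bfone_H)=\bfone_A$ is equivalent to $f-e\in R_1$, using $e(\bfone_H)=\un_A\vep_C(\bfone_H)=\bfone_A$ (Eq.~(\mref{ee})). Closure of $G$ under $*$ and the membership $e\in G$ follow from $\Delta(\bfone_H)=\bfone_H\ot\bfone_H$ (Eq.~(\mref{ed})). It then suffices to invoke Lemma~\mref{lem:inv} for the complete filtered algebra $(R,*)$, whose unit is the idempotent $e$: writing $f=e-a$ with $a\in R_1$, the series $\sum_{k\ge 0}a^{*k}$ converges by completeness (its tail from index $N$ lies in $R_N$, hence has norm $\le 2^{-N}$), telescopes against $f$ to give $e$ on both sides, and lies in $e+R_1=G$ because $R_1$ is closed in the metric.

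I expect the main obstacle to be the completeness claim in part~\mref{it:comdual}: one must be careful that $R/R_n$ really is recovered by restriction to $H^{(n-1)}$ and that a compatible family in the inverse limit assembles into a genuine linear map on $H$ rather than merely on each filtration piece. Once completeness is secured, part~\mref{it:gp} is essentially a translation of Lemma~\mref{lem:inv}, the only subtleties being to recognize that the relevant unit is the convolution idempotent $e$ (not any element $\bfone$ of $R$) and to confirm that the inverse constructed by the geometric series stays inside $G$.
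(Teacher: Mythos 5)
Your proposal is correct and follows essentially the same route as the paper's proof: the same coassociativity-plus-counit verification for part~\mref{it:dual}, the same degree-counting argument showing $R_n * R_m\subseteq R_{n+m}$ in part~\mref{it:comdual}, and the same identification $G=e+R_1$ followed by an appeal to Lemma~\mref{lem:inv} in part~\mref{it:gp}. The only cosmetic difference is that you establish completeness via the inverse-limit characterization (gluing a compatible family along $H=\bigcup_n H^{(n)}$), whereas the paper constructs the limit of a Cauchy sequence directly; since the paper's definition of completeness states these two conditions are equivalent, this is not a substantive departure.
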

\begin{proof}
\mref{it:dual} By the associativity of $m$ and
coassociativity of $\Delta$,
\begin{align*}
(f * g) * h &= m ((f * g) \ot h) \Delta = m(m \ot \id_A)(f \ot g
\ot h)(\Delta \ot \id_C)\Delta
\\
&= m(\id_A \ot m)(f \ot g \ot h)(\id_C \ot \Delta)\Delta = m (f
\ot (g * h)) \Delta
         = f * (g * h).
\end{align*}

Also,
\begin{align*}
f * u_A \vep_C &= m (f \ot u_A \vep_C) \Delta
         = m (\id_A \ot u_A)(f \ot \id_\bfk)(\id_C \ot \vep_C) \Delta
         \\
         & =\alpha_\ell (f\ot \id_\bfk) \beta_\ell  = f,
\\
u_A\vep_C * f &= m (u_A\vep_C \ot f) \Delta
         = m (u_A \ot \id_A)(\id_\bfk \ot f)(\vep_C \ot \id_C) \Delta
         \\
         & = \alpha_r (\id_\bfk \ot f) \beta_r  = f.
\end{align*}

\mref{it:comdual} Let $f\in R_p$ and $g\in R_q$. Then for
$x\in H_k,\ k\leq {p+q-1}$, we have $ (f\cprod g)(x)=\sum_{(x)}
f(x_{(1)})g(x_{(2)})$ with $\deg x_{(1)}+\deg x_{(2)}=k.$ So in
each term of the sum, either $\deg x_{(1)}\leq p-1$ or $\deg
x_{(2)}\leq q-1$. So either $f(x_{(1)})=0$ or $g(x_{(2)})=0$. Thus
$(f\cprod g)(x)=0$. Therefore $R_p\cprod R_q\subseteq
R_{p+q}$.

To prove that the filtration is complete, we first note that
$$\cap_{n\geq 0} R_n=\{f\in R \Big | f(H_n)=0,\ \forall n\}
=0$$ since $H=\cup_{n\geq 0} H_n$. Next let $f_n$ be a Cauchy
sequence in $R$. Define a $f\in R$ as follows. Let $x\in
H_k,\ k\geq 0$. Then there is $N$ such that
$$ |f_n-f_N|\leq \frac{1}{2^k},\ \forall n\geq N.$$
This means that $f_n(x)=f_N(x),\ \forall x\in H_k$. Define
$f(x)=f_N(x)$ for such an $N$. It is easy to check that
$\lim_{n\to \infty} f_n=f.$

\mref{it:gp} $f\in \Hom(H,A)$ with $f(\bfone_H)=\bfone_A$ if and
only if $f=e+g$ with $g\in \Hom(H,A)$ such that $g(\bfone_H)=0$.
Therefore $g$ is in $R_1$. Then by Lemma~\mref{lem:inv}, $f$ is
invertible.
\end{proof}

\subsubsection{Antipode and Hopf algebra}
Let $(H,\mult,\un,\Delta,\vep)$ be a $\bfk$-bialgebra. A
$\bfk$-linear endomorphism $S$ of $H$ is called an {\bf antipode}
for $H$ if it is the inverse of $\id_H$ under the convolution
product:
\begin{equation}
 S\ast \id_H = \id_H\ast S =\un\, \vep.
\mlabel{anti}
\end{equation}
A {\bf  Hopf algebra} is a bialgebra $H$ with an antipode $S$. In
view of Theorem~\mref{thm:inv}, we have,

\begin{theorem}
Any connected filtered cograded bialgebra $H$ is a Hopf algebra. The antipode
is defined by~:
$$S(x)=\sum_{k\ge 0}(u\varepsilon-I)^{*k}(x).$$
It is also defined by $S(\bfone_H)=\bfone_H$ and recursively by
any of the two formulas for $x\in\ker\vep$.
\begin{eqnarray*}S(x) &=& -x-\sum_{(x)}S(x')x'', \\
        S(x)    &=&  -x-\sum_{(x)}x'S(x'').
\end{eqnarray*}
\mlabel{thm:pt}
\end{theorem}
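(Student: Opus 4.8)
The plan is to specialize Theorem~\mref{thm:inv} to the case $A=H$ and read off the antipode as the convolution inverse of $\id_H$. First I would note that a connected filtered cograded bialgebra is in particular a connected filtered bialgebra, via the induced filtration $H^{(n)}=\sum_{k\le n}H_k$, so that Theorem~\mref{thm:inv} applies with $A=H$. Parts~\mref{it:dual} and~\mref{it:comdual} then make $(\Hom(H,H),*,\un\,\vep)$ a complete filtered algebra with unit $e=\un\,\vep$, and part~\mref{it:gp} makes $G=\{f\mid f(\bfone_H)=\bfone_H\}$ a group under convolution. Since $\id_H(\bfone_H)=\bfone_H$, we have $\id_H\in G$, so $\id_H$ has a two-sided convolution inverse $S\in G$. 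By the defining equation~(\mref{anti}) this $S$ is precisely an antipode, which already establishes that $H$ is a Hopf algebra; moreover $S\in G$ gives $S(\bfone_H)=\bfone_H$ at once.

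For the closed formula I would write $\id_H=e-a$ with $a:=\un\,\vep-\id_H$. Since $a(\bfone_H)=\bfone_H-\bfone_H=0$ we have $a\in R_1$, and because $R_p*R_q\subseteq R_{p+q}$ we get $a^{*k}\in R_k$, so the partial sums of $\sum_{k\ge 0}a^{*k}$ are Cauchy and the series converges by completeness (part~\mref{it:comdual} of Theorem~\mref{thm:inv}). The geometric-series identity of Lemma~\mref{lem:inv}, transported to the convolution algebra, gives $(e-a)*\sum_{k\ge0}a^{*k}=e=\sum_{k\ge0}a^{*k}*(e-a)$ by telescoping, so $S=\sum_{k\ge0}a^{*k}=\sum_{k\ge0}(\un\,\vep-\id_H)^{*k}$, which is the stated formula.

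For the two recursions I would evaluate $S*\id_H=e$ and $\id_H*S=e$ on $x\in\ker\vep$, where $e(x)=\un(\vep(x))=0$. Using the splitting $\Delta(x)=x\ot\bfone_H+\bfone_H\ot x+\tilde{\Delta}(x)$ from Theorem~\mref{thm:conn} together with $S(\bfone_H)=\bfone_H$, the identity $(S*\id_H)(x)=0$ expands as $S(x)+x+\sum_{(x)}S(x')x''=0$, giving $S(x)=-x-\sum_{(x)}S(x')x''$; symmetrically $(\id_H*S)(x)=0$ yields $S(x)=-x-\sum_{(x)}x'S(x'')$. These recursions are well-defined and terminate because Theorem~\mref{thm:conn} places $\tilde{\Delta}(x)$ in $\oplus_{p+q=n,\,p>0,\,q>0}H_p\ot H_q$, so for $x\in H_n$ every $x'$ and $x''$ has filtration degree strictly below $n$; the descent bottoms out at $S(\bfone_H)=\bfone_H$.

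The step requiring the most care is not existence, which is immediate from the group structure, but reconciling the three descriptions of $S$. Concretely I must keep the signs straight in $a=\un\,\vep-\id_H$ so that the geometric series has the correct form, and I must ensure that the metric and completeness apparatus of Theorem~\mref{thm:inv} genuinely licenses the convergence of $\sum_k a^{*k}$ inside $\Hom(H,H)$. Once this is done, all three formulas necessarily agree, since the convolution inverse of $\id_H$ in the group $G$ is unique.
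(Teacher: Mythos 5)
Your proposal is correct and follows essentially the same route as the paper: existence of $S$ and the series formula come from specializing Theorem~\ref{thm:inv} (whose invertibility mechanism is exactly the geometric series of Lemma~\ref{lem:inv} applied to $a=\un\,\vep-\id_H\in R_1$), and the two recursions come from expanding $S\cprod\id_H=e=\id_H\cprod S$ on $\ker\vep$ via the splitting $\Delta(x)=x\ot\bfone_H+\bfone_H\ot x+\tilde{\Delta}(x)$. Your write-up merely makes explicit the details the paper leaves implicit, including the useful observation that Theorem~\ref{thm:conn} guarantees the recursion terminates.
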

\begin{proof}
The existence of the antipode and its first formula follows from
Theorem~\mref{thm:inv}. The two recursive formulas follow from
$e(\ker\vep)=0$ and the equalities
$$e=S\cprod \id_H=m(S\otimes I)\Delta, \quad
 e=\id_H\cprod S= m(I\otimes S)\Delta. $$
\end{proof}

The first formula of $S$ in Theorem~\mref{thm:pt} was first
obtained in~\mcite{F-G1} following a suggestion of E. Taft. There the
proof took the following form.
\begin{theorem}
Let $H$ be a connected, graded bialgebra and let $A$ be a
$\bfk$-algebra.
\begin{enumerate}
\item Let $f\in \Hom(H,A)$ be such that $f(\bfone_H)=0$. Then for
$x\in H^n$, $n\geq 0$, we have $f^{\cprod (n+1)}(x)=0$.
\mlabel{it:nil} \item The set $G=\{\varphi\in\Hom(H,A)\Big |\
\varphi(\bfone_H)=\bfone_A\}$ endowed with the convolution product
is a group. \mlabel{it:inv}
\end{enumerate}
\mlabel{thm:conv}
\end{theorem}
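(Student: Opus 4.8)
The plan is to prove part~\ref{it:nil} by a direct degree-counting (pigeonhole) argument, and then obtain part~\ref{it:inv} as a formal consequence, using \ref{it:nil} to make a geometric-series inverse terminate. The entire difficulty is concentrated in \ref{it:nil}; once it is in hand, \ref{it:inv} is bookkeeping.

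For part~\ref{it:nil}, I would first write the convolution power as an iterated composite
$$f^{\cprod(n+1)}=\mult^{(n)}\circ f^{\ot(n+1)}\circ\Delta^{(n)},$$
where $\Delta^{(n)}\colon H\to H^{\ot(n+1)}$ and $\mult^{(n)}\colon A^{\ot(n+1)}\to A$ are the iterated coproduct and product, both well defined by coassociativity of $\Delta$ and associativity of $\mult$. Iterating the grading condition $\Delta(H_n)\subseteq\bigoplus_{p+q=n}H_p\ot H_q$ yields
$$\Delta^{(n)}(H_n)\subseteq\bigoplus_{i_0+\cdots+i_n=n}H_{i_0}\ot\cdots\ot H_{i_n}.$$
The crucial observation is then purely combinatorial: in each index tuple $(i_0,\ldots,i_n)$ the $n+1$ nonnegative integers sum to $n$, so at least one $i_j$ must equal $0$. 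Since $H$ is connected, $H_0=\bfk\bfone_H$, so in every term of $\Delta^{(n)}(x)$ at least one tensor factor is a scalar multiple of $\bfone_H$. Applying $f^{\ot(n+1)}$ and using the hypothesis $f(\bfone_H)=0$ annihilates that factor, hence kills the whole term, giving $f^{\cprod(n+1)}(x)=0$. This pigeonhole step is the heart of the argument and the only place where $f(\bfone_H)=0$ and connectedness enter; I expect it to be the main (indeed the only) real obstacle.

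For part~\ref{it:inv}, I would take the convolution identity $e=\un_A\vep_H$ and, given $\varphi\in G$, set $f:=\varphi-e$. Using $\vep_H(\bfone_H)=\bfone_\bfk$ (already noted in the excerpt via $e(\bfone_H)=\bfone_H$) one gets $e(\bfone_H)=\un_A(\bfone_\bfk)=\bfone_A$, whence $f(\bfone_H)=\varphi(\bfone_H)-\bfone_A=0$, so $f$ satisfies the hypothesis of \ref{it:nil}. I would then define the candidate inverse $\psi:=\sum_{k\ge0}(-1)^k f^{\cprod k}$, with the convention $f^{\cprod0}:=e$. By \ref{it:nil}, on any $x\in H_n$ all summands with $k\ge n+1$ vanish, so $\psi(x)$ is a finite sum; thus $\psi$ is a well-defined element of $\Hom(H,A)$, and note that no completeness or metric is invoked, in contrast to Theorem~\ref{thm:inv}. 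Since all sums are finite when evaluated on a fixed $x$, the telescoping identity
$$\varphi\cprod\psi=(e+f)\cprod\psi=\sum_{k\ge0}(-1)^k f^{\cprod k}+\sum_{k\ge0}(-1)^k f^{\cprod(k+1)}=f^{\cprod0}=e$$
is legitimate, and symmetrically $\psi\cprod\varphi=e$, exhibiting $\psi$ as a two-sided convolution inverse.

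Finally I would check the group axioms on $G$: associativity of $\cprod$ is Theorem~\ref{thm:inv}(a); closure follows from $\Delta(\bfone_H)=\bfone_H\ot\bfone_H$, which gives $(\varphi_1\cprod\varphi_2)(\bfone_H)=\varphi_1(\bfone_H)\varphi_2(\bfone_H)=\bfone_A$; the identity $e$ lies in $G$ since $e(\bfone_H)=\bfone_A$; and the inverse $\psi$ constructed above also lies in $G$, because $f^{\cprod k}(\bfone_H)=f(\bfone_H)^k=0$ for $k\ge1$ forces $\psi(\bfone_H)=e(\bfone_H)=\bfone_A$. This completes the plan.
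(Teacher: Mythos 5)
Your proof is correct, but part (a) is argued differently from the paper. The paper proves (a) by induction on the degree: writing $\Delta(x)=x\ot \bfone_H+\bfone_H\ot x+\sum_{(x)}x'\ot x''$ with $x',x''$ of strictly smaller degree (this is Theorem~\ref{thm:conn}), it expands $f^{\cprod(k+2)}=f\cprod f^{\cprod(k+1)}$ and kills each term either by $f(\bfone_H)=0$ or by the inductive hypothesis. You instead give a one-shot pigeonhole argument: $f^{\cprod(n+1)}=\mult^{(n)}\circ f^{\ot(n+1)}\circ\Delta^{(n)}$, the iterated coproduct of a degree-$n$ element lands in $\bigoplus_{i_0+\cdots+i_n=n}H_{i_0}\ot\cdots\ot H_{i_n}$, and since $n+1$ nonnegative integers summing to $n$ must include a zero, connectedness ($H_0=\bfk\bfone_H$) puts a multiple of $\bfone_H$ in every term, which $f$ annihilates. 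Both are standard; the paper's induction leans on the reduced-coproduct structure already established in Theorem~\ref{thm:conn} and avoids manipulating $\Delta^{(n)}$, while your argument is non-inductive, makes the mechanism transparent (why exactly $n+1$ factors suffice in degree $n$), and transfers verbatim to the connected filtered/cograded setting. For part (b) the two proofs are essentially identical terminating geometric series — you use $\varphi=e+f$ and $\sum_k(-1)^kf^{\cprod k}$ where the paper uses $g=e-f$ and $\sum_k f^{\cprod k}$ — though you are slightly more careful than the paper in checking two-sidedness of the inverse, closure of $G$, and that the inverse again lies in $G$.
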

\begin{proof}
\mref{it:nil} We prove by induction on $n\geq 0$, with the case
when $n=0$ following from $f(\bfone_H)=0$ and the connectedness of
$H$.

Assume the statement holds for $0\leq n\leq k$. Then for $x\in
H_{k+1}$, we have
\begin{align*}
f^{\cprod (k+2)}(x)&= (f\cprod f^{\cprod (n+1)})(x)\\
&= m(f\ot f^{\cprod (n+1)})\Delta (x)\\
&= m (f\ot f^{\cprod (n+1)}) (\bfone_H\ot x+x\ot \bfone_H +\sum_{(x)}x'\ot x'')\\
&= f(\bfone_H)f^{\cprod(n+1)}(x)+f(x)f^{\cprod (n+1)}(\bfone_H)+
\sum_{(x)}f(x')f^{\cprod(n+1)}(x'').
\end{align*}
The first term is zero by our choice of $f$. The others terms are
zero by induction since $\bfone_H$ and $x''$ are in $H_n$ with
$n\leq k$.

\mref{it:inv} The set $G$ is obviously closed under convolution
multiplication with $e$ as identity. For $g\in G$, let $f=e-g$.
Then $f(\bfone_H)=0$. So by \mref{it:nil}, for $x\in H_n$, we have
$f^{\cprod (n+1)}(x)=0$. Thus $\sum_{k\geq 0} f^{\cprod
k}(x)=\sum_{k=0}^n f^{\cprod k}(x)$ is well-defined and
$$(g\cprod (\sum_{k=0}^n f^{\cprod k}))(x)=((e-f)\cprod (\sum_{k=0}^n
f^{\cprod k}))(x)=(e-f^{\cprod (n+1)})(x)=e(x).$$ Thus $g\cprod
(\sum_{k\geq 0} f^{\cprod k})=e$ and  $g$ is invertible.
\end{proof}

\subsection{Characters and derivations}
\mlabel{sec:char}

\begin{defn}
Let $H$ be a Hopf algebra and $A$ an algebra.
{\rm An element $f\in \Hom(H,A)$ is called a {\bf character} if $f$ is
an algebra homomorphism, and is called a {\bf derivation} (or {\bf
infinitesimal character}) if
\begin{equation}
 f(xy)=e (x) f(y)+e (y) f(x),\ \forall\, x,\, y\in H.
 \mlabel{deriv}
 \end{equation}
The set of characters (resp.
derivations) is denoted by $\mchar(H,A)$ (resp. $\pmchar(H,A)$).
}
\end{defn}
We note that $f(\bfone_H)=\bfone_A$ if $f$ is a character and
$f(\bfone_H)=0$ if $f$ is a derivation.
We recall the notations $R=\Hom(H,A)$ and $R_1=\{f\in
\Hom(H,A)\,\Big |\, f(\bfone_H)=0\}$.
\begin{prop} Let $A$ be a commutative algebra.
\begin{enumerate}
\item $\mchar(H,A)$ is a group under convolution.
 The inverse of
$\phi\in \mchar(H,A)$ is given by
 $\phi^{-1}=\phi \,S$.
\mlabel{it:char}
\item $\pmchar(H,A)$ is a Lie algebra under
convolution. \mlabel{it:der} \item The bijection $\exp: R_1\to
e+R_1$ in Eq. (\mref{eq:exp}) restricts to a bijection $\exp:
\pmchar (H,A) \to \mchar(H,A).$ \mlabel{it:bij}
\end{enumerate}
\mlabel{pp:dual} \mlabel{groupchar}
\end{prop}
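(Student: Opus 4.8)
Write $e=\un_A\vep_H$ and recall from Theorem~\mref{thm:inv} that $(R,*)=(\Hom(H,A),*)$ is a complete filtered algebra with unit $e$, that $G=e+R_1$ is a group, and that $\exp\colon R_1\to e+R_1=G$ is a bijection with inverse $\log$. Since $A$ is commutative I would first record an auxiliary construction that drives all three parts. View $\Hom(H\ot H,A)$ as a convolution algebra for the tensor coalgebra structure on $H\ot H$; as $H\ot H$ is again connected filtered cograded, this algebra is complete filtered with a bijective $\exp$. Define $\mu\colon R\to\Hom(H\ot H,A)$ by $\mu(f)=f\circ\mult_H$, and for $f,g\in R$ set $f\odot g=\mult_A\circ(f\ot g)$, so $(f\odot g)(x\ot y)=f(x)g(y)$; put $\iota(f)=f\odot e$ and $\kappa(f)=e\odot f$. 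The bialgebra axiom~\eqref{md} says precisely that $\mult_H$ is a morphism of coalgebras, so $\mu$ is an algebra homomorphism; a short Sweedler computation using commutativity of $A$ gives $(f_1\odot g_1)*(f_2\odot g_2)=(f_1*f_2)\odot(g_1*g_2)$, whence $\iota,\kappa$ are algebra homomorphisms with commuting images ($\iota(f)*\kappa(g)=f\odot g=\kappa(g)*\iota(f)$), and all three maps preserve the filtrations. This yields the dictionary: $f\in G$ is a character iff $\mu(f)=\iota(f)*\kappa(f)$, and $f$ is a derivation iff $\mu(f)=\iota(f)+\kappa(f)$.

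For~(a) I would exhibit $\mchar(H,A)$ as a subgroup of $G$. It contains $e$, which is multiplicative because $\vep$ is. Applying the homomorphism $\mu$ and using that $\iota,\kappa$ have commuting images, $\mu(\phi*\psi)=\mu(\phi)*\mu(\psi)=\iota(\phi*\psi)*\kappa(\phi*\psi)$ for characters $\phi,\psi$, so $\phi*\psi$ is a character; similarly $\mu(\phi^{-1})=\mu(\phi)^{-1}=\iota(\phi^{-1})*\kappa(\phi^{-1})$, so $\phi^{-1}$ is a character. Finally the formula $\phi^{-1}=\phi\,S$ follows from the antipode axiom~\eqref{anti}:
\begin{equation*}
\phi*(\phi\circ S)=\mult_A\circ(\phi\ot\phi)\circ(\id_H\ot S)\circ\Delta=\phi\circ\mult_H\circ(\id_H\ot S)\circ\Delta=\phi\circ(\id_H*S)=\phi\circ\un_H\vep_H=\un_A\vep_H=e,
\end{equation*}
and symmetrically on the other side.

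For~(b), antisymmetry and the Jacobi identity for $[f,g]:=f*g-g*f$ are automatic since $(R,*)$ is associative (Theorem~\mref{thm:inv}\,\mref{it:dual}) and $\pmchar(H,A)$ is visibly a $\bfk$-submodule of $R$; the only real point is that the bracket of two derivations is again a derivation. Expanding $(f*g)(xy)$ via $\Delta(xy)=\Delta(x)\Delta(y)$ and the derivation rule for $f$ and $g$ yields four Sweedler sums which, after collapsing with the counit identities $\sum\vep(x_{(1)})x_{(2)}=x=\sum x_{(1)}\vep(x_{(2)})$, give
\begin{equation*}
(f*g)(xy)=e(x)(f*g)(y)+e(y)(f*g)(x)+\big(f(x)g(y)+f(y)g(x)\big).
\end{equation*}
The bracketed obstruction is symmetric in $f$ and $g$ because $A$ is commutative, so it cancels in $(f*g-g*f)(xy)$, leaving exactly the derivation identity for $[f,g]$.

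For~(c), since $\exp$ is already a bijection from $R_1$ onto $G$ with $\pmchar(H,A)\subseteq R_1$ and $\mchar(H,A)\subseteq G$, it suffices to show that for $\alpha\in R_1$ one has $\exp(\alpha)\in\mchar(H,A)$ iff $\alpha\in\pmchar(H,A)$. As $\mu,\iota,\kappa$ are filtration-preserving algebra homomorphisms they commute with $\exp$, and as $\iota(\alpha),\kappa(\alpha)$ commute we may add exponents; hence $\exp(\alpha)$ is a character iff $\mu(\exp\alpha)=\iota(\exp\alpha)*\kappa(\exp\alpha)$, i.e. iff $\exp(\mu\alpha)=\exp(\iota\alpha+\kappa\alpha)$, and by injectivity of $\exp$ on the auxiliary algebra this is equivalent to $\mu(\alpha)=\iota(\alpha)+\kappa(\alpha)$, which is precisely the derivation condition. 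The main obstacle is this set-up: verifying that $H\ot H$ inherits a connected filtered cograded structure so the auxiliary algebra is complete with a bijective $\exp$, and that $\mu,\iota,\kappa$ respect the filtrations so they genuinely commute with $\exp$ and $\log$. Locating exactly where commutativity of $A$ enters---in the identity $(f_1\odot g_1)*(f_2\odot g_2)=(f_1*f_2)\odot(g_1*g_2)$ and in the cancellation in~(b)---is the conceptual core; the rest is routine Sweedler bookkeeping.
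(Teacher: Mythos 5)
Your proposal is correct, but it takes a genuinely different route from the paper for parts (a) and (c). The paper argues by direct convolution computations: for (a) it shows $(f\ast g)\circ\mult_H=\mult_A\circ\big((f\ast g)\ot(f\ast g)\big)$ by expanding through the compatibility diagram in Eq.~(\mref{md}) and using commutativity of $A$ to reorder the four tensor factors, and it proves $\phi^{-1}=\phi\,S$ exactly as you do; for (c) it proves by induction the binomial identity $f^{\ast n}(xy)=\sum_{k=0}^n\binom{n}{k}f^{\ast k}(x)\,f^{\ast(n-k)}(y)$ for a derivation $f$ and then expands $\exp(f)(xy)$ termwise, with the converse direction (that $\log$ of a character is a derivation) only asserted ``by a similar argument''. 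Your auxiliary-algebra formalism --- the complete filtered convolution algebra $\Hom(H\ot H,A)$ together with the filtration-preserving homomorphisms $\mu,\iota,\kappa$ and the dictionary (character $\Leftrightarrow$ $\mu(f)=\iota(f)\ast\kappa(f)$, derivation $\Leftrightarrow$ $\mu(f)=\iota(f)+\kappa(f)$) --- buys you closure of $\mchar(H,A)$ under products and inverses as a purely formal consequence of $\mu,\iota,\kappa$ being homomorphisms with commuting $\iota$- and $\kappa$-images, and, more significantly, it gives both directions of (c) simultaneously from the injectivity of $\exp$ on the auxiliary algebra, thereby filling in the step the paper leaves as a sketch; this is essentially the conceptual approach of Manchon's survey \mcite{Ma}, which the paper cites elsewhere. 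The price is the setup you correctly flagged: checking that $H\ot H$ is again connected filtered cograded, so that the auxiliary convolution algebra is complete and $\exp$, $\log$ make sense there, and that $\mu,\iota,\kappa$ preserve filtrations and hence commute with $\exp$. Your part (b) is essentially the paper's computation transcribed into Sweedler notation; both hinge on the same point, namely that the obstruction $f(x)g(y)+f(y)g(x)$ is symmetric in $f,g$ when $A$ is commutative and therefore cancels in the commutator.
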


\begin{proof}
\mref{it:char} Clearly $e$ is in $\mchar(H,A)$.
We note that $f\in \Hom(H,A)$ is multiplicative means that
$$ m_A(f\ot f)=f\,m_H.$$
Thus for $f,g\in \mchar(H,A)$, using the fact that $\Delta$ is an
algebra homomorphism,
 we have
\allowdisplaybreaks{
\begin{align*}
(f\cprod g) m_H &= m_A (f\ot g) \Delta_H m_H\\
    &= m_A (f\ot g)(m_H\ot m_H)\tau_{2,3} (\Delta_H\ot \Delta_H)\\
    &= m_A \big( (f m_H )\ot(g m_H)\big) \tau_{2,3} (\Delta_H\ot
    \Delta_H) \\
    &= m_A \big( m_A(f\ot f) \ot m_A(g\ot g)\big) \tau_{2,3}
    (\Delta_H\ot \Delta_H)\\
    &= m_A (m_A\ot m_A) (f\ot f\ot g\ot g)\tau_{2,3}
    (\Delta_H\ot \Delta_H) \\
    &= m_A (m_A\ot m_A) (f\ot g\ot f\ot g)(\Delta_H\ot \Delta_H) \\
    &= m_A \big( (m_A(f\ot g)\Delta_H)\ot (m_A(f\ot
    g)\Delta_H)\big) \\
    &= m_A \big( (f\cprod g)\ot (f\cprod g)\big)
    \end{align*}}
proving that $f\cprod g$ is in $\mchar(H,A)$.

Further, for $f\in \mchar(H,A)$,
\begin{align*}
(f S)\cprod f &= m_H((f S)\ot f) \Delta \\
&= m_A (f\ot f)(S\ot \it_H)\Delta \\
&= f m_H (S\ot \id_H) \Delta \\
&= f(S\cprod \id_H)=f e = e.
\end{align*}
Likewise, $f\cprod (fS)=e$. So $f^{\cprod(-1)}=f\,S$.
\medskip

\mref{it:der} As in \mref{it:char}, we first note that $f$ is a
derivation mean that
$$ f\, m_H=m_A (e\ot f + f\ot e).$$
Then for $f,g\in \pmchar(H,A)$, we have \allowdisplaybreaks{
\begin{align*}
(f\cprod g)\,m_H &= m_A (f\ot g) \Delta_H m_H\\
    &= m_A (f\ot g)\tau_{2,3} (\Delta_H\ot \Delta_H)\\
    &= m_A \Big (\big(m_A(e\ot f+f\ot e)\big)\ot \big(m_A(e\ot
    g+g\ot e)\big) \Big) \\
    &= m_A (m_A\ot m_A)(e\ot f\ot e\ot g+e\ot f\ot g\ot e \\
    & \quad +f\ot e\ot e\ot g
     +f\ot e\ot g\ot e) \tau_{2,3} (\Delta_H\ot \Delta_H)\\
    &= m_A (m_A\ot m_A)(e\ot e\ot f\ot g+e\ot g\ot f\ot e \\
    & \quad +f\ot e\ot e\ot g +f\ot g\ot e\ot e) (\Delta_H\ot \Delta_H)\\
    &=m_A\Big((e\cprod e)\ot (f\cprod g)+(e\cprod g)\ot (f\cprod e)
    +(f\cprod e)\ot (e\cprod g)+(f\cprod g)\ot (e\cprod e)\Big)\\
    &=m_A\Big(e\ot (f\cprod g)+ g\ot f +f\ot g+(f\cprod g)\ot e\Big).\\
\end{align*}}
Similarly,
$$(g\cprod f)\,m_H
=m_A\Big(e\ot (g\cprod f)+ f\ot g +g\ot f+(g\cprod f)\ot e\Big).$$
Therefore,
$$\big((f\cprod g)-(g\cprod f)\big)\,m_H
=m_A\big(e\ot (f\cprod g-g\cprod g)+(f\cprod g-g\cprod f)\ot
e\big)$$ as needed.
\medskip

\mref{it:bij} Let $f\in \pmchar(H,A)$. Using
$f(xy)=e(x)f(y)+e(y)f(x)$ and induction, we have
$$ f^{\cprod n}(xy)=\sum_{k=0}^n \binc{n}{k} f^{\cprod
k}(x)f^{\cprod (n-k)}(y),\ n\geq 0.$$ Thus
\begin{align*}
e^{\cprod f}(xy)&= \sum_{n\geq 0} \frac{1}{n!} f^{\cprod n}(xy)\\
    &= \sum_{n\geq 0} \frac{1}{n} \sum_{k=0}^n \binc{n}{k}
    f^{\cprod k}(x)f^{\cprod (n-k)}(y) \\
    &= \sum_{n\geq 0} \sum_{k=0}^n \frac{1}{k!}f^{\cprod k}(x)
    \frac{1}{(n-k)!} f^{\cprod (n-k)}(y) \\
    &= e^{\cprod f}(x)\, e^{\cprod f}(y).
\end{align*}
So $e^{\cprod f}$ is in $\mchar(H,A)$.
By a similar argument, we verify that if $\phi\in \mchar(H,A)$ then
$$\log_\cprod (\phi):=-\sum_{n\geq 1}
\frac{(1-\phi)^{\cprod n}}{n}$$
is in $\pmchar(H,A)$.
\end{proof}

\subsection{Algebraic Birkhoff Decomposition}
\mlabel{sec:algb}
Now we can state the \ABD.
See Theorem~\mref{thm:atke} for its interpretation in terms of Rota-Baxter algebra and Section \mref{ss:idea} for its application in renormalization.

\begin{theorem}
Let $H$ be a connected filtered cograded Hopf algebra over $\CC$. Let $K$ be a $\CC$-algebra and let $A=K[t^{-1},t]]$ be the algebra of Laurent series. Define $Q:A\to A$ by
$$Q\big(\sum_{n} a_n t^n \big)=\sum_{n<0} a_n t^n.$$
\begin{enumerate}
\item For
$\phi\in \mchar(H,A)$, there are unique linear maps $\phi_-:H\to K[t^{-1}]$ and $\phi_+:H\to K[[t]]$ such
that
\begin{equation}
 \phi=\phi_-^{\cprod (-1)}\cprod \phi_+.
 \mlabel{eq:decom}
\end{equation}
\mlabel{it:decom}
\item The elements $\phi_-$ and $\phi_+$
take the following forms on $\ker \vep$.
\begin{align}
\phi_-(x)&=
-Q(\phi(x)+\sum_{(x)}\phi_-(x')\phi(x'')),
\mlabel{eq:phi-}\\
\phi_+(x)&=
\tilde{Q}(\phi(x)+\sum_{(x)}\phi_-(x')\phi(x''))
\mlabel{eq:phi+}\\
&= \tilde{Q}(\phi(x)+\sum_{(x)}\phi(x')\phi_+(x''))
\end{align}
\mlabel{it:rec} \item The linear maps $\phi_-$ and $\phi_+$
are also algebra homomorphisms. \mlabel{it:decgp}
\end{enumerate}
\mlabel{thm:algBi}
\end{theorem}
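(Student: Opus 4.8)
The plan is to construct $\phi_-$ explicitly by induction along the cograding, to obtain $\phi_+$ as the convolution $\phi_-\cprod\phi$, and then to treat uniqueness and the homomorphism property as two further inductions. Throughout I use that $A=K[t^{-1},t]]=t^{-1}K[t^{-1}]\oplus K[[t]]=\im Q\oplus\im\tilde Q$ is a direct sum of the two subalgebras $\im Q$ and $\im\tilde Q$, where $\tilde Q=\id-Q$.

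For existence (part (a)) and the recursive formulas (part (b)), I would first set $\phi_-(\bfone_H)=1$ and define $\phi_-$ on $\ker\vep=\bigoplus_{n>0}H_n$ by the stated recursion $\phi_-(x)=-Q\bigl(\phi(x)+\sum_{(x)}\phi_-(x')\phi(x'')\bigr)$. This is well posed by induction on the cograding degree: by Theorem~\mref{thm:conn} one has $\tilde\Delta(x)\in\bigoplus_{p+q=n,\,p,q>0}H_p\ot H_q$, so every $x',x''$ occurring in $\sum_{(x)}$ has strictly smaller degree, where $\phi_-$ is already defined. Since $Q$ takes values in $t^{-1}K[t^{-1}]$, the map $\phi_-$ indeed lands in $K[t^{-1}]$. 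Because $\phi_-(\bfone_H)=1=\bfone_A$, Theorem~\mref{thm:inv} (the group statement) shows $\phi_-$ lies in the convolution group and is $\cprod$-invertible, so I may define $\phi_+:=\phi_-\cprod\phi$; this gives the factorization $\phi=\phi_-^{\cprod(-1)}\cprod\phi_+$ of Eq.~(\mref{eq:decom}) for free. Expanding $\phi_+=\phi_-\cprod\phi$ on $x\in\ker\vep$ with $\phi(\bfone_H)=\phi_-(\bfone_H)=1$ yields $\phi_+(x)=\phi_-(x)+\phi(x)+\sum_{(x)}\phi_-(x')\phi(x'')$, and substituting $\phi_-(x)=-Q(\,\cdot\,)$ rewrites this as $(\id-Q)$ applied to the same argument, i.e. the first formula $\phi_+(x)=\tilde Q\bigl(\phi(x)+\sum_{(x)}\phi_-(x')\phi(x'')\bigr)$; in particular $\phi_+$ lands in $K[[t]]=\im\tilde Q$.

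For uniqueness (part (a)), suppose $\phi=\psi_-^{\cprod(-1)}\cprod\psi_+$ is a second such factorization with $\psi_\pm(\bfone_H)=1$ and $\psi_-$ carrying $\ker\vep$ into $t^{-1}K[t^{-1}]=\im Q$ (the minimal-subtraction normalization built into the target $K[t^{-1}]$ through $Q$). Rewriting as $\psi_+=\psi_-\cprod\phi$ and evaluating on $x\in\ker\vep$ gives $\psi_+(x)=\psi_-(x)+\phi(x)+\sum_{(x)}\psi_-(x')\phi(x'')$. Applying $Q$ and using the direct-sum splitting, together with $Q\psi_+(x)=0$ (as $\psi_+(x)\in K[[t]]$) and $Q\psi_-(x)=\psi_-(x)$ (as $\psi_-(x)\in\im Q$), forces $\psi_-(x)=-Q\bigl(\phi(x)+\sum_{(x)}\psi_-(x')\phi(x'')\bigr)$, which is exactly the defining recursion. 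Induction on degree then gives $\psi_-=\phi_-$ and hence $\psi_+=\phi_+$.

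The homomorphism property (part (c)) is where I expect the real work. I would prove that $\phi_-$ is a character by induction on $\deg(xy)$ for $x,y\in\ker\vep$; granting this, $\phi_+=\phi_-\cprod\phi$ is a character as a convolution product of characters by Proposition~\mref{groupchar}. The essential input is that $Q$ is a Rota--Baxter operator of weight $-1$ on the commutative algebra $A$ (equivalently that $\im Q$ and $\im\tilde Q$ are subalgebras), expressed by the identity $Q(a)Q(b)=Q\bigl(Q(a)b+aQ(b)-ab\bigr)$, combined with the compatibility of $\Delta$ with multiplication in Eq.~(\mref{md}), which writes $\Delta(xy)$ through $\Delta(x)$ and $\Delta(y)$. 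I would expand $\phi_-(xy)$ via the recursion and $\phi_-(x)\phi_-(y)$ directly, reorganize the resulting $\sum_{(x)}\sum_{(y)}$ terms using coassociativity of $\tilde\Delta$ (Theorem~\mref{thm:conn}), replace lower-degree values $\phi_-(\,\cdot\,\cdot\,)$ by products $\phi_-(\,\cdot\,)\phi_-(\,\cdot\,)$ by the inductive hypothesis, and collapse everything with the weight $-1$ Rota--Baxter identity. This bookkeeping of the double coproduct and the term-by-term matching against the Rota--Baxter relation is the main obstacle: it is precisely the point at which multiplicativity of the counterterm (and therefore of the renormalized map) is forced by the algebraic structure of $Q$, and it is what the Rota--Baxter/Atkinson reformulation in Theorem~\mref{thm:atke} packages more conceptually. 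The alternative expression for $\phi_+$ recorded in part (b) I would obtain as the output of the conjugate (right-handed) recursion, its agreement with $\phi_-\cprod\phi$ being guaranteed by the uniqueness established above.
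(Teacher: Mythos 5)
Your handling of parts (a) and (b) is correct but takes a genuinely different route from the paper's. You build $\phi_-$ directly by induction on the cograded degree---well posed because $\tilde{\Delta}$ strictly lowers degree (Theorem~\mref{thm:conn})---and then set $\phi_+:=\phi_-\cprod\phi$; this is in essence the original Connes--Kreimer argument, which the paper explicitly declines to reproduce. The paper instead transports the problem into the complete filtered Rota--Baxter algebra $R=\Hom(H,A)$ with $P(f)=Q\circ f$, applies the Atkinson existence-and-uniqueness theorem (Theorem~\mref{thm:atke}) to $a=e-\phi\in R_1$, and unwinds the fixed-point equation $b_\ell=e+P(b_\ell\cprod(e-\phi))$ on $\ker\vep$ to recover Eq.~(\mref{eq:phi-}). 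Your route is more elementary and self-contained (no completeness of $\Hom(H,A)$, no fixed-point argument); the paper's route buys uniformity and reuse---the same machinery later yields Spitzer's identity and the closed exponential formulas of Corollary~\mref{co:expabc}, and it isolates exactly where $Q^2=Q$ and the weight $-1$ relation enter. For part (c) your plan (induction on $\deg x+\deg y$, the identity $Q(a)Q(b)=Q(Q(a)b+aQ(b)-ab)$, commutativity of $A$, then $\phi_+=\phi_-\cprod\phi$ via Proposition~\mref{groupchar}) is exactly the paper's proof, described at sketch level.

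Two points need repair. First, uniqueness: you assume $\psi_-(\ker\vep)\subseteq t^{-1}K[t^{-1}]$ and call this normalization ``built into the target $K[t^{-1}]$.'' It is not: $K[t^{-1}]\cap K[[t]]=K$, so for any convolution-invertible $\rho:H\to K$ with $\rho(\bfone_H)=1$ and $\rho\neq e$, the pair $(\rho\cprod\phi_-,\,\rho\cprod\phi_+)$ is a second factorization with the stated targets. Uniqueness holds only under the normalization you actually used (equivalently $c_\ell\in e+P(R)$, $c_r\in e+\tilde{P}(R)$ as in Theorem~\mref{thm:atke}(b), which is how the paper gets it), so state it as a hypothesis rather than as automatic; given that, your argument via the splitting $\im Q\oplus\im\tilde{Q}$ is fine. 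Second, the unnumbered third formula for $\phi_+$: your appeal to ``the conjugate right-handed recursion plus uniqueness'' does not go through. The right-handed Atkinson equation $b_r=e+\tilde{P}((e-\phi)\cprod b_r)$ characterizes $b_r=\phi_+^{\cprod(-1)}$, not $\phi_+$; expanding it gives $\phi_+^{\cprod(-1)}(x)=-\tilde{Q}\big(\phi(x)+\sum_{(x)}\phi(x')\phi_+^{\cprod(-1)}(x'')\big)$. In fact the printed line is inconsistent with the rest of the theorem: in the divided power algebra with $\phi(a_1)=t^{-1}+1$, Eq.~(\mref{eq:phi+}) gives $\phi_+(a_2)=\tfrac12$, as multiplicativity of $\phi_+$ (part (c), with $a_1a_1=2a_2$) requires, whereas $\tilde{Q}\big(\phi(a_2)+\phi(a_1)\phi_+(a_1)\big)=\tfrac32$. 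So no correct argument can produce that line; note that the paper's own derivation in \S\mref{ss:atabc} establishes only (\mref{eq:phi-}) and (\mref{eq:phi+}) and never proves it either.
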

Eq.~(\mref{eq:phi-}) is called twisted antipode formula of $\phi_-$~\mcite{Kr}.
\begin{proof}
\mref{it:decom} and \mref{it:rec} will be proved in \S~\mref{ss:atabc} using Theorem~\mref{thm:atke}.
For their original proofs, see~\mcite{C-K1}.

\mref{it:decgp} Once we prove that $\phi_-$ is a character, it
is immediate that $\phi_+$ is also character by
Proposition~\mref{pp:dual} and $\phi_+=\phi_-\cprod
\phi$.

For $x,y\in H$, we have \allowdisplaybreaks{
\begin{align*}
\Delta_H(xy)&= \Delta_H(m_H(x\ot y))\\
&= (m_H\ot m_H)\tau_{2,3}(\Delta_H\ot \Delta_H)(x\ot y)\\
&= (m_H\ot m_H)\tau_{2,3}\big( (x\ot 1+1\ot x+\sum_{(x)}x'\ot
x'')\ot (y\ot 1 +1\ot y +\sum_{(y)}y'\ot y'')\big)\\
&= (m_H\ot m_H)(x\ot y\ot 1\ot 1+x\ot 1\ot 1\ot y +\sum_{(y)}x\ot
y'\ot 1\ot y'' \\
& +1\ot y\ot x\ot 1+ 1\ot 1\ot x\ot y +\sum_{(y)}1\ot y'\ot x\ot
y''\\
&+ \sum_{(x)}x'\ot y\ot x''\ot 1 + \sum_{(x)} x'\ot 1\ot x''\ot y
+\sum_{(x),(y)} x'\ot y'\ot x''\ot y'')\\
&=xy\ot 1+x\ot y +\sum_{(y)}xy'\ot y'' + y\ot x+ 1\ot xy
+\sum_{(y)} y'\ot xy'' \\
& + \sum_{(x)}x' y\ot x'' + \sum_{(x)} x'\ot x'' y +\sum_{(x),(y)}
x' y'\ot x'' y''.
\end{align*}}
So \allowdisplaybreaks{
\begin{align*}
\phi_-(xy)&= -Q\big(
\phi(xy)+\phi_-(x)\phi(y)+\phi_-(y)\phi(x) \\
&+
\sum_{(y)}(\phi_-(y')\phi(xy'')+\phi_-(xy')\phi(y''))
+\sum_{(x)}(\phi_-(x'y)\phi(x'')+\phi_-(x')\phi(x''y))\\
&+ \sum_{(x),(y)} \phi_-(x'y')\phi(x''y'')
\end{align*}}

On the other hand, denoting $\phi_-(x)=-Q(X)$,
$\phi_-(y)=-Q(Y)$ and using the Rota--Baxter relation of $Q$,
we have have \allowdisplaybreaks{
\begin{align*}
\phi_-(x)\phi_-(y)&= Q(X)Q(Y)\\
&= Q(Q(X)Y+XQ(Y)-XY)\\
&= -Q(\phi_-(x)Y+X\phi_-(y)+XY)\\
&= -Q\bigg(\phi_-(x)\big(\phi(y)
    +\sum_{(y)}\phi_-(y')\phi(y''))\big)
 +\big(\phi(x)+\sum_{(x)} \phi_-(x')\phi(x'')\big)
    \phi_-(y) \\
& \quad +\big(\phi(x)+\sum_{(x)} \phi_-(x')\phi(x'')\big)
\big(\phi(y)+\sum_{(y)}\phi_-(y')\phi(y'')\big)\bigg) \\
&= -Q\bigg(\phi_-(x)\phi(y)
+\sum_{(y)}\phi_-(x)\phi_-(y')\phi(y'')
+\phi(x)\phi_-(y) \\
&\quad +\sum_{(x)} \phi_-(x')\phi_-(y)\phi(x'')
+\phi(x)\phi(y)
 +\sum_{(x)} \phi_-(x')\phi(x'')\phi(y) \\
&\quad +\sum_{(y)}  \phi_-(y')\phi(x)\phi(y'')
+\sum_{(x),(y)} \phi_-(x')\phi_-(y')\phi(x'')
\phi(y'')\bigg).
\end{align*}}
Here we use the commutativity of $A$ in the last equation. Then by
induction on $\deg x+\deg y$ and the multiplicativity of
$\phi$, we see that this equals to $\phi_-(xy)$.
\end{proof}

\section{Rota-Baxter algebra and Atkinson Factorization}
\mlabel{sec:rb}

We first give
in Section~\mref{pre} the definition of Rota-Baxter algebras and basic
examples. We then prove the Atkinson Factorization in Section~\mref{sec:atk2}.
We then drive \ABD from Atkinson Factorization in Section~\mref{ss:atabc}.
In Section~\mref{sec:sp}, we prove Spitzer's identity
and use it to give an explicit form of \ABD.

\subsection{Definitions and examples}
\mlabel{pre}

\begin{defn}
Let $\lambda$ be a fixed element of $\bfk$. Let $R$ be a
$\bfk$-algebra. A linear operator $P$ on $R$ is called a {\bf
Rota--Baxter operator}, of weight $\lambda$
if $P$ satisfies the {\bf Rota--Baxter equation}
\begin{equation}
P(x)P(y)=P(xP(y))+P(P(x)y)+\lambda P(xy),\quad \forall x,y\in R,
\mlabel{eq:RB}
\end{equation}
or, equivalently,
\begin{equation}
P(x)P(y)+\theta P(xy)=P(xP(y))+P(P(x)y),\quad \forall x,y\in R.
\end{equation}
Here $\theta=-\lambda$. Then $R$ is called a {\bf Rota--Baxter
$\bfk$-algebra}, of weight $\lambda$.
\end{defn}
The algebra is named after the American mathematician Glen E. Baxter~\footnote{Glen E. Baxter (March 19, 1930 - March 30, 1983) was an American
mathematician, born in Minneapolis, Minnesota.
He received his Ph.D. in 1954 from University
of Minnesota. His fields of research include probability,
combinatorial analysis, statistical mechanics and functional
analysis. He had appointments at M.I.T., University of Minnesota,
University of California at San Diego and Purdue University, as
well as a visiting professorship at the University of Aarhus, Denmark.} who introduced this structure in 1960,
and the well-known combinatorist Gian-Carlo Rota who led its study in the 1960s and 1970s and then promoted its study in the 1990s. Rota-Baxter operator is related to the classical Yang-Baxter equation, named after the famous physicists Chen-Ning Yang and Rodney James Baxter. Since the turn of this century, Rota-Baxter algebra has been studied in connection with quite a few areas of mathematics and physics, including Yang--Baxter
equations, shuffle products,
operads, Hopf algebra, combinatorics and number theory. Our focus here is its connection with the
work of Connes and Kreimer in their Hopf algebraic approach to
renormalization theory in perturbative quantum field
theory.

\begin{exam}
{\rm
Let $R$ be the $\RR$-algebra of continuous functions on $\RR$.
Define $P:R\to R$ by the integration
$$ P(f)(x)=\int_0^x f(t)dt.$$
Then $P$ is a Rota-Baxter operator of weight zero. This follows from the
integration by parts formula. For $f,g\in R$, let $F(x)=\int_0^x
f(t)\,dt$ and $G(x)=\int_0^x g(t)\, dt$. Then $F'(x)=f(x)$ and
$G'(x)=g(x)$. So by integration by parts formula, we have
$$ \int_0^x f(t) G(t)\, dt = F(t)G(t)\Big|_0^x-\int_0^x F(t) g(t)\, dt
    = F(x)G(x)-\int_0^x F(t) g(t)\, dt .$$
Rearranging the terms, we get Eq.(\mref{eq:RB}) with $\lambda=0$.
}
\end{exam}

\begin{exam}
{\rm
Let $R$ be the set of sequences $\{a_n\}$ with values in $\bfk$.
Then $R$ is a $\bfk$-algebra with termwise sum, product and scalar
product. Define
$$P:R\to R,\\ P(a_1,a_2,\cdots)=(a_1,a_1+a_2,a_1+a_2+a_3,\cdots).$$
Regarding $\{a_n\}$ as a function $f:\NN\to \RR$, then $P(f)$ is
the sequence of partial sums
$$ P(f):\NN\to \RR,\ P(f)(n)=\sum_{i=1}^{n} f(i),\ \ n\in \NN.$$
We show that $P$ is a Rota-Baxter operator of weight $-1$. For $f,g\in R$, have
$$(P(f)P(g))(n)=P(f)(n)P(g)(n)=\left (\sum_{i=1}^n f(i)\right)
    \left (\sum_{j=1}^n g(j)\right)
    =\sum_{i,j=1}^n f(i)g(j).$$
The sum is over the set of lattice points
$$\{(i,j)\, \big|\, 1\leq i\leq n,\, 1\leq j\leq n\}$$
which is the union of subsets
$$\{(i,j)\, \big|\, 1\leq i\leq j\leq n\}$$
and
$$\{(i,j)\, \big|\, 1\leq j\leq i\leq n\}$$
with the overlapping set
$$\{(i,j)\, \big|\, 1\leq i= j\leq n\}.$$
We also have
$$ P(fP(g))(n)=\sum_{1\leq i\leq n} f(i)P(g)(i)
= \sum_{1\leq i\leq n} f(i)\left (\sum_{1\leq j\leq i} g(j)\right)
= \sum_{1\leq i\leq j\leq n} f(i)g(j)$$ which is summed over the
first subset. Similarly $P(P(f)g)(n)$ and $P(fg)(n)$ are the sums
over the second and third subset respectively. Thus $P$ satisfies
Eq. (\mref{eq:RB}) with $\lambda=-1$. \mlabel{ex:sum}
}
\end{exam}

\begin{exam}
{\rm
For the algebra $R$ in Example~\ref{ex:sum}, define
$$P:R\to R,\\ P(a_1,a_2,\cdots)=(0, a_1,a_1+a_2,\cdots).$$
Then $P$ is a Rota-Baxter operator of weight $1$.
}
\end{exam}

\begin{exam}
{\rm
Let $R$ be a $\bfk$-algebra. For any given $\lambda$, the
operator $P_\lambda :R\to R$ defined by $P_\lambda(a)=-\lambda a$
is a Rota-Baxter operator or weight $\lambda$. In particular, the identity map
is a Rota-Baxter operator of weight $-1$.
}
\mlabel{ex:mult}
\end{exam}

\begin{exam}
We recall from Section~\mref{sec:algb} the algebra $\CC[t^{-1},t]]$ of Laurent series. The operator
$$Q(\sum_{n=-k}^\infty a_n t^{n}) = \sum_{n=-k}^{-1} a_n t^{n}$$
on $\CC[t^{-1},t]]$, whose image is in $t^{-1}\CC[t^{-1}]$, is a Rota-Baxter operator of weight $-1$.
\mlabel{ex:lau}
\end{exam}

\subsection{Atkinson Factorization}
\mlabel{sec:atk2}
We now consider Atkinson Factorization and the related existence and uniqueness problems.

\subsubsection{Atkinson Factorization}
\mlabel{ss:atk2}
We start with a lemma.
\begin{lemma}
Let $(R,P)$ be a Rota-Baxter algebra of weight $\lambda$. and let $\tilde{P}=-\lambda \id - P$. Then
\begin{eqnarray}
P(x)\tilde{P}(y) &=& P\big(x \tilde{P}(y)\big) +
\tilde{P}\big(P(x)y\big) \mlabel{ab}\\
\tilde{P}(x) P(y)&=& P\big(\tilde{P}(x)y\big) +
\tilde{P}\big(xP(y)\big), \;\; x,y \in R. \mlabel{ba}
\end{eqnarray}
\mlabel{lem:ab}
\end{lemma}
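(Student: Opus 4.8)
The plan is to prove both identities by direct expansion, substituting the definition $\tilde{P}=-\lambda\,\id-P$ and reducing everything to the Rota--Baxter equation~(\mref{eq:RB}). Since $P$ is linear, the first step is simply to record that $\tilde{P}(y)=-\lambda y - P(y)$ for every $y\in R$, and then to push this substitution through both sides of each claimed equality.

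For the first identity~(\mref{ab}), I would expand the left-hand side as
\[ P(x)\tilde P(y) = P(x)\bigl(-\lambda y - P(y)\bigr) = -\lambda\,P(x)y - P(x)P(y), \]
and expand the right-hand side, using linearity of $P$, as
\[ P\bigl(x\tilde P(y)\bigr) + \tilde P\bigl(P(x)y\bigr) = -\lambda\,P(xy) - P\bigl(xP(y)\bigr) - \lambda\,P(x)y - P\bigl(P(x)y\bigr). \]
Subtracting, the terms $-\lambda\,P(x)y$ cancel, and what remains is exactly $-P(x)P(y) + P\bigl(xP(y)\bigr) + P\bigl(P(x)y\bigr) + \lambda\,P(xy)$, which vanishes by~(\mref{eq:RB}). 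This establishes~(\mref{ab}). For the second identity~(\mref{ba}) I would run the identical computation with the two factors in reversed order: expand $\tilde P(x)P(y) = -\lambda\,xP(y) - P(x)P(y)$ and $P\bigl(\tilde P(x)y\bigr)+\tilde P\bigl(xP(y)\bigr) = -\lambda\,P(xy) - P\bigl(P(x)y\bigr) - \lambda\,xP(y) - P\bigl(xP(y)\bigr)$, so that the difference once again collapses to~(\mref{eq:RB}).

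The computation is entirely routine, so I do not expect a genuine obstacle; the only points requiring care are the bookkeeping of the weight-$\lambda$ terms and the sign in $\tilde P=-\lambda\,\id-P$. A cleaner, more conceptual route to the same conclusion is to first verify by a short direct check that $\tilde P$ is itself a Rota--Baxter operator of weight $\lambda$, which makes transparent why~(\mref{ab}) and~(\mref{ba}) hold simultaneously as left--right mirror images of one another. In either case the content of the lemma is precisely a repackaging of the Rota--Baxter relation; its value, as the notation $\tilde P$ signals, is that it will serve as the splitting device driving the Atkinson Factorization in Section~\mref{sec:atk2}.
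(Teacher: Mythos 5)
Your proposal is correct and follows essentially the same route as the paper: substitute $\tilde{P}=-\lambda\,\id-P$, expand, and invoke the Rota--Baxter equation~(\mref{eq:RB}), with the second identity handled by the mirror-image computation. The only cosmetic difference is that the paper expands the left-hand side and regroups it into the right-hand side, whereas you expand both sides and check that the difference vanishes; these are the same calculation.
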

\begin{proof}
We have
\begin{eqnarray*}
P(x)\tilde{P}(y)
&=& -\lambda P(x)y-P(x)P(y) \\
&=& -\lambda P(x)y-P(xP(y))-P(P(x)y)-\lambda P(xy)\\
&=& P\big(x \tilde{P}(y)\big) +
\tilde{P}\big(P(x)y\big).
\end{eqnarray*}
The proof of Eq.~(\mref{ba}) is the same.
\end{proof}

The following is Atkinson (multiplicative)
Factorization~\mcite{At}.
\begin{theorem}
{\rm (Atkinson Factorization)} Let $(R,P)$ be a Rota--Baxter algebra of weight $\lambda\neq 0$. Let $a\in R$. Assume that $b_\ell$ and $b_r$ are solutions of the fixed point equations
\begin{equation}
 b_\ell=1+P(b_\ell a), \qquad
b_r=1+\tilde{P}(ab_r).
\mlabel{eq:recurlr}
\end{equation}
Then
\begin{equation}
 b_\ell (1+\lambda a) b_r = 1.
 \mlabel{eq:atk2}
\end{equation}
Thus
\begin{equation}
 1+\lambda a = b_\ell^{-1} b_r^{-1}
\mlabel{eq:atk3}
\end{equation}
if $b_\ell$ and $b_r$ are invertible.
\mlabel{thm:atk2} \mlabel{thm:Atkinson}
\end{theorem}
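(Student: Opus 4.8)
The plan is to obtain the factorization identity Eq.~(\mref{eq:atk2}) directly from the two defining fixed-point equations in Eq.~(\mref{eq:recurlr}) together with the mixed relation of Lemma~\mref{lem:ab}, and then read off Eq.~(\mref{eq:atk3}) by invertibility. First I would rewrite the fixed-point equations as $P(b_\ell a)=b_\ell-1$ and $\tilde{P}(ab_r)=b_r-1$, so that $b_\ell-1$ and $b_r-1$ are recognized as precisely the images of $b_\ell a$ and $ab_r$ under $P$ and $\tilde{P}$ respectively. This is the observation that lets the Rota--Baxter identities do all the work.

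The central step is to apply Eq.~(\mref{ab}), namely $P(x)\tilde{P}(y)=P\big(x\tilde{P}(y)\big)+\tilde{P}\big(P(x)y\big)$, to the specific inputs $x=b_\ell a$ and $y=ab_r$. With this choice the left-hand side becomes $P(b_\ell a)\tilde{P}(ab_r)=(b_\ell-1)(b_r-1)$. On the right-hand side the inner arguments $b_\ell a\,\tilde{P}(ab_r)$ and $P(b_\ell a)\,ab_r$ both reduce, upon substituting $\tilde{P}(ab_r)=b_r-1$ and $P(b_\ell a)=b_\ell-1$, to the common product $b_\ell a b_r$ shifted by the single terms $b_\ell a$ and $ab_r$. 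Expanding by linearity, the right-hand side collapses to $\big(P+\tilde{P}\big)(b_\ell a b_r)-P(b_\ell a)-\tilde{P}(ab_r)$.

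Here I would invoke the defining relation $\tilde{P}=-\lambda\,\id-P$, i.e. $P+\tilde{P}=-\lambda\,\id$, to rewrite $\big(P+\tilde{P}\big)(b_\ell a b_r)=-\lambda\,b_\ell a b_r$, and once more substitute $P(b_\ell a)=b_\ell-1$, $\tilde{P}(ab_r)=b_r-1$. Equating the two sides gives $b_\ell b_r-b_\ell-b_r+1=-\lambda\,b_\ell a b_r-b_\ell-b_r+2$; cancelling the common $-b_\ell-b_r$ leaves exactly $b_\ell b_r+\lambda\,b_\ell a b_r=1$, which factors as $b_\ell(1+\lambda a)b_r=1$, establishing Eq.~(\mref{eq:atk2}). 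Assuming finally that $b_\ell$ and $b_r$ are invertible, multiplying this identity on the left by $b_\ell^{-1}$ and on the right by $b_r^{-1}$ yields $1+\lambda a=b_\ell^{-1}b_r^{-1}$, which is Eq.~(\mref{eq:atk3}).

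The main obstacle is really the \emph{only} nontrivial decision: choosing the inputs $x=b_\ell a$, $y=ab_r$ so that three things happen at once, namely that $P(x)\tilde{P}(y)$ reproduces $(b_\ell-1)(b_r-1)$, that both inner arguments on the right reduce to the same product $b_\ell a b_r$ up to terms that are themselves $P(b_\ell a)$ and $\tilde{P}(ab_r)$, and that $P$ and $\tilde{P}$ occur symmetrically enough for $P+\tilde{P}=-\lambda\,\id$ to collapse the $b_\ell a b_r$ contributions. Once this pairing is spotted, the remainder is routine bookkeeping; it is worth noting that the verification uses neither $\lambda\neq 0$ nor any completeness of $R$, since those hypotheses are needed only to produce the solutions $b_\ell,b_r$ in the first place, not for the factorization identity itself.
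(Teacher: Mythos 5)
Your proof is correct and is essentially the paper's own argument: both hinge on applying Eq.~(\mref{ab}) of Lemma~\mref{lem:ab} to exactly the inputs $x=b_\ell a$, $y=ab_r$, substituting the fixed-point equations~(\mref{eq:recurlr}), and collapsing the $b_\ell a b_r$ terms via $P+\tilde{P}=-\lambda\,\id$; the paper merely organizes the same computation as an expansion of $b_\ell b_r=(1+P(b_\ell a))(1+\tilde{P}(ab_r))$ rather than starting from the lemma. Your closing observation that the verification itself uses neither $\lambda\neq 0$ nor completeness (these matter only for producing and inverting $b_\ell,b_r$) is also accurate and consistent with the paper.
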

\begin{proof}
Using Lemma~\mref{lem:ab} and Eq.~(\mref{eq:recurlr}), we compute
\begin{eqnarray*}
b_\ell b_r&=& (1+P(b_\ell a))(1+\tilde{P}(ab_r))\\
&=& 1+P(b_\ell a) + \tilde{P}(a b_r)+P(b_\ell a)\tilde{P}(a b_r) \\
&=& 1+P(b_\ell a) + \tilde{P}(ab_r) +P(b_\ell a \tilde{P}(ab_r))+\tilde{P}(P(b_\ell a) ab_r) \\
&=& 1+P\big(b_\ell a (1+\tilde{P}(ab_r))\big)
+\tilde{P}\big((1+P(b_\ell a))ab_r\big) \\
&=& 1+P(b_\ell ab_r) +\tilde{P}(b_\ell a b_r) \\
&=& 1-\lambda b_\ell a b_r.
\end{eqnarray*}
Therefore,
$$ 1=b_\ell b_r +\lambda b_\ell a b_r = b_\ell(1+\lambda a)b_r,$$
as needed.
\end{proof}

\begin{remark}
{\rm
We note that the factorization (\mref{eq:atk3}) depends on the exitance of invertible solutions of Eq.~(\mref{eq:recurlr}). There are the following three natural questions:
\begin{enumerate}
\item
When does the Atkinson Factorization exist?
\item
In which sense the Atkinson Factorization is unique?
\item
Is there an explicit formula for the factors $b_\ell$ and $b_r$ in the Atkinson Factorization?
\end{enumerate}
We will answer the first two questions in Theorem~\mref{thm:atke} and the third one in Theorem~\mref{thm:Sp3}.
\mlabel{rk:atk}
}
\end{remark}

\subsubsection{Exitance and uniqueness of the Atkinson Factorization}
A Rota-Baxter algebra $(R,P)$ is called {\bf complete} if there are submodules $R_n\subseteq R, n\geq 0,$ such that $(R,R_n)$ is a complete algebra and $P(R_n)\subseteq R_n$.
\begin{theorem}
{\rm (Existence and uniqueness of the Atkinson Factorization)}
Let $(R,P,R_n)$ be a complete Rota-Baxter algebra. Let $a$ be in $R_1$.
\begin{enumerate}
\item
The equations in (\mref{eq:recurlr}) have unique solutions $b_\ell$ and $b_r$. Further $b_\ell$ and $b_r$ are invertible. Hence Atkinson Factorization (\mref{eq:atk3}) exists.
\mlabel{it:atke}
\item
If $\lambda$ has no non-zero divisors in $R_1$ and $P^2=-\lambda P$ (in particular if $P^2=-\lambda P$ on $R$), then
there are unique $c_\ell\in 1+P(R)$ and $c_r\in 1+\tilde{P}(R)$ such that
$$ 1+\lambda a= c_\ell c_r.$$
\mlabel{it:atku}
\end{enumerate}
\mlabel{thm:atke}
\end{theorem}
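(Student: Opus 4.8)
The plan is to deduce part (a) from a contraction argument combined with the Atkinson Factorization already established in Theorem~\ref{thm:Atkinson}, and then to obtain part (b) by identifying the factors with $b_\ell^{-1}$ and $b_r^{-1}$ and studying $1+P(R)$ and $1+\tilde P(R)$ as subgroups of $1+R_1$. For part (a), I would solve each equation in~\eqref{eq:recurlr} by successive approximation, setting $b_\ell^{(0)}=1$ and $b_\ell^{(n+1)}=1+P(b_\ell^{(n)}a)$. Because $a\in R_1$, multiplication by $a$ raises filtration degree while $P$ preserves it (by completeness of $(R,P,R_n)$), so the successive differences $b_\ell^{(n+1)}-b_\ell^{(n)}=P\big((b_\ell^{(n)}-b_\ell^{(n-1)})a\big)$ lie in ever higher $R_n$; hence $(b_\ell^{(n)})$ is Cauchy and converges to the unique solution $b_\ell$, and likewise for $b_r$ using $\tilde P$. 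Since $P(b_\ell a)$ and $\tilde P(ab_r)$ lie in $R_1$, both $b_\ell,b_r\in 1+R_1$ are invertible by Lemma~\ref{lem:inv}, and Theorem~\ref{thm:Atkinson} gives $b_\ell(1+\lambda a)b_r=1$, which is the factorization~\eqref{eq:atk3}.

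For the existence half of part (b) I would put $c_\ell=b_\ell^{-1}$ and $c_r=b_r^{-1}$, so that $1+\lambda a=c_\ell c_r$ by~\eqref{eq:atk3}, and then check that $1+P(R)$ and $1+\tilde P(R)$ are subgroups of $1+R_1$ containing these elements. Closure under products is immediate: the Rota--Baxter relation~\eqref{eq:RB} rewrites $P(x)P(y)=P\big(xP(y)+P(x)y+\lambda xy\big)$, and Lemma~\ref{lem:ab} does the analogous job for $\tilde P$ (which is itself a Rota--Baxter operator of weight $\lambda$). For closure under inversion, writing $b_\ell=1+P(\alpha)$ with $\alpha=b_\ell a\in R_1$, I would look for a right inverse of the form $1+P(\xi)$; expanding $P(\alpha)P(\xi)$ by~\eqref{eq:RB} reduces $(1+P(\alpha))(1+P(\xi))=1$ to the single equation
\[
\xi=-\alpha-\alpha P(\xi)+\tilde P(\alpha)\xi,
\]
whose right-hand side is once more a contraction on $R_1$. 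Its unique solution $\xi\in R_1$ yields $b_\ell^{-1}=1+P(\xi)\in 1+P(R)$, and symmetrically $b_r^{-1}\in 1+\tilde P(R)$, giving $c_\ell,c_r$ of the required shape.

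For uniqueness I would compare two such factorizations $c_\ell c_r=c_\ell'c_r'$, rewrite them as $(c_\ell')^{-1}c_\ell=c_r'c_r^{-1}$, and use the subgroup property to conclude that this common element $d$ lies in $(1+P(R))\cap(1+\tilde P(R))$. It then suffices to show this intersection is trivial. Writing $d=1+P(u)=1+\tilde P(v)$ gives $P(u)=\tilde P(v)$, and here both hypotheses of part (b) enter: the condition $P^2=-\lambda P$ forces $P\tilde P=P(-\lambda\,\id-P)=-\lambda P-P^2=0$, so applying $P$ yields $-\lambda P(u)=P\tilde P(v)=0$; since $d\in 1+R_1$ we have $P(u)\in R_1$, and as $\lambda$ has no nonzero divisors in $R_1$ we conclude $P(u)=0$, i.e. $d=1$. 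Hence $c_\ell=c_\ell'$ and $c_r=c_r'$.

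The routine part is everything through the existence of the factorization, which is just contraction plus Theorem~\ref{thm:Atkinson}; the delicate point is uniqueness, where I expect the main care to be needed. One must guarantee that the comparison $(c_\ell')^{-1}c_\ell=c_r'c_r^{-1}$ takes place among invertible elements of $1+R_1$, so that the subgroup structure of $1+P(R)$ and $1+\tilde P(R)$ is available, and one must verify that the triviality of the intersection genuinely consumes both hypotheses: $P^2=-\lambda P$ to produce $P\tilde P=0$, and the absence of nonzero $\lambda$-divisors on $R_1$ to pass from $\lambda P(u)=0$ to $P(u)=0$. Without either hypothesis the intersection can be larger and the factors are no longer determined.
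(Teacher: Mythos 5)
Your proposal is correct and follows essentially the same route as the paper: part (a) by the same successive-approximation argument in the filtration metric with invertibility supplied by Lemma~\mref{lem:inv} and the factorization by Theorem~\mref{thm:Atkinson}, and part (b) by reducing uniqueness to the triviality of $(1+P(R_1))\cap(1+\tilde{P}(R_1))$ using closure of these sets under products and inverses, the identity $P\tilde{P}=0$ forced by $P^2=-\lambda P$, and the hypothesis that $\lambda$ has no nonzero divisors in $R_1$. The only cosmetic difference is that you produce the inverse of $1+P(\alpha)$ in the form $1+P(\xi)$ by solving a fixed-point equation, whereas the paper writes it as the geometric series $\sum_{n\geq 0}(-P(\alpha))^n$ and invokes the Rota--Baxter relation to place it in $1+P(R_1)$; both arguments (and the paper's own) really operate inside $1+P(R_1)$ and $1+\tilde{P}(R_1)$ rather than $1+P(R)$ and $1+\tilde{P}(R)$, since invertibility and the zero-divisor hypothesis are only available there.
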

\begin{proof}
\mref{it:atke} This follows from general results on fix point equations. We give the details to be self-contained. Define
$ b_{\ell,0}= 1$
and inductively define
\begin{equation}
b_{\ell,n+1}=1+P(b_{\ell,n} a), n\geq 0.
\mlabel{eq:fix}
\end{equation}
So
$$ b_{\ell,1}=1+P(a), \quad b_{\ell,2}=1+P(a)+P(P(a)a), \cdots.$$
Then for the metric (\mref{eq:metric}) defined by the filtration on $R$, we have
\begin{eqnarray*}
o(b_{\ell,n+1}-b_{\ell,n})&=&
o\big(P((b_{\ell,n}-b_{\ell,n-1})a)\big)
    \quad (\mbox{by Eq.~(\mref{eq:fix})})\\
&\geq & o \big(((b_{\ell,n}-b_{\ell,n-1})a)\big)
        \quad (\mbox{since $P(R_n)\subseteq P(R_n)$})\\
&>& o(b_{\ell,n}-b_{\ell,n-1})\quad (\mbox{since $a\in A_1$}).
\end{eqnarray*}
Thus $\{b_{\ell,n}\}$ is a Cauchy sequence and hence
$b_{\ell,\infty}=\lim_{n\to \infty} b_{\ell,n}$ exists.
Taking limits on both sides of Eq.~(\mref{eq:fix}), we see that $b_{\ell,\infty}$ is a solution of
$b_\ell= 1+P(b_\ell a)$. It is the unique solution since, if
$b'_\ell=1+P(b_\ell' a)$, then $b_\ell-b_\ell'=P((b_\ell-b_\ell')a)$. Then
$$ |b_\ell -b_\ell'|<|b_\ell -b_\ell'|$$
unless $b_\ell-b_\ell'=0$.

Further by Lemma~\mref{lem:inv}, $b_\ell=1+P(b_\ell a)$ is invertible.

The same proof works for $b_r$.
\medskip

\mref{it:atku}
Suppose there are $a_1,a_2, b_1,b_2\in R_1$ such that
$$ 1 +\lambda a
=(1+P(a_1))(1+\tilde{P}(b_1))=(1+P(a_2))(1+\tilde{P}(b_2)).
$$
Then
\begin{equation} (1+P(b_1))^{-1}(1+P(a_1)) =(1+\tilde{P}(b_2))(1+\tilde{P}(a_2))^{-1}.
\mlabel{eq:cap}
\end{equation}
Since
$$ (1+P(b_1))^{-1}=\sum_{n\geq 0} (-P(b_1))^{n}$$
is in $1+P(R_1)$, by the Rota-Baxter relation we have
$(1+P(b_1))^{-1}(1+P(a_1)) =1+P(c)$ for some $c\in R_1$.
Similarly
$(1+\tilde{P}(b_2))(1+\tilde{P}(a_2))^{-1}= 1+\tilde{P}(d)$ for some $d\in R_1$. Thus Eq.~(\mref{eq:cap} gives $1+P(c)=1+\tilde{P}(d)$ and
hence $P(c)=\tilde{P}(d)$. Applying $P$ to this equation and using $P^2=-\lambda P$, we have
$$-\lambda P(c)=P^2(c)=P(\tilde{P}(d))=0,$$
giving $P(c)=0$ under our assumption on $\lambda$.
Then $\tilde{P}(d)=0$ also. Thus we have
$$1+P(a_1)=1+P(b_1), \qquad 1+\tilde{P}(a_2)=1+\tilde{P}(b_2).
$$
This proves the uniqueness.
\end{proof}

\subsection{From Atkinson Decomposition to Algebraic Birkhoff Decomposition}
\mlabel{ss:atabc}

We now derive \ABD of Connes and Kreimer in Theorem~\mref{thm:algBi} from Atkinson Factorization in Theorem~\mref{thm:atke}.
Adapting the notations in Theorem~\mref{thm:algBi}, let $H$ be a connected filtered cograded bialgebra (hence a Hopf algebra) and let $(A,Q)$ be a commutative Rota-Baxter algebra of weight $\lambda=-1$ with $Q^2=Q$. By Theorem~\mref{thm:inv}, $R:=\Hom(H,A)$ is a complete algebra with the filtration
$R_n=\{ f\in \Hom(H,A)\ |\ f(H^{n-1})=0 \}, n\geq 0$.
Further define
$$ P: R \to R, \quad P(f)(x)=Q(f(x)), f\in \Hom(H,A), x\in H.$$
Then it is easily checked that $P$ is a Rota-Baxter operator of weight $-1$ and $P^2=P$. Thus $(R,R_n,P)$ is a complete Rota-Baxter algebra.

Now let $\phi:H\to A$ be a character (that is, an algebra homomorphism). Consider $e-\phi:H\to A$. Then
$$ (e-\phi) (1_H) =e(1_H)-\phi(1_H)=1_H-1_H=0.$$
Thus $e-\phi$ is in $R_1$. Take $e-\phi$ to be our $a$ in Theorem~\mref{thm:atke}, we see that there are unique
$c_\ell\in P(R_1)$ and $c_r\in P(R_1)$ such that
$$ \phi = c_\ell c_r.$$
Further, by Theorem~\mref{thm:atk2}, for $b_\ell=c_\ell^{-1}$,
$b_\ell=e+P(b_\ell \cprod (e-\phi))$. Thus for $x\in \ker \vep =\ker e$, we have
\begin{eqnarray*}
b_\ell (x)&=& P(b_\ell \cprod (e-\phi))(x) \\
&=& \sum_{(x)} Q(b_\ell(a_{(1)})(e-\phi)(x))\\
&=& Q\big( b_\ell(1_H)(e-\phi)(x)+
\sum_{(a)} b_\ell(x')(e-\phi)(a'') + b_\ell(x)(e-\phi)(1_H)\\
&=& -Q\big( \phi(x) + \sum_{(x)} b_\ell (x')\phi(x'')\big).
\end{eqnarray*}
In the last equation we have used
$e(a)=0, e(a'')=0$ by definition.
Since $b_\ell(1_H)=1_H$, we see that $b_\ell=\phi_-$ in Eq.~(\mref{eq:phi-}).

Further, we have
$$c_r=c_\ell^{-1} \phi=b_\ell \phi
= -b_\ell(e-\phi) + b_\ell
= -b_\ell(e-\phi) + e+P(b_\ell (e-\phi))
= e-\tilde{P}(b_\ell (e-\phi)).$$
With the same computation as for $b_\ell$ above,
we see that $c_r = \phi_+$ in Eq.~(\mref{eq:phi+}).

\subsection{Spitzer's identity and explicit Algebraic Birkhoff Decomposition}
\mlabel{sec:sp}
We now address the third question in Remark~\mref{rk:atk} about Theorem~\mref{thm:Atkinson}, namely on explicit formula for the Atkinson Factorization in Eq.~(\mref{eq:atk3}). By Theorem~\mref{thm:atke}, answer to this question also provide an explicit formula for \ABD. We obtain our answer by generalizing Spitzer's identity. This identity is important both for its
theoretical significance and for its surprisingly wide range of
applications.

\subsubsection{Classical form of Spitzer's identity}

According to Rota~\mcite{Ro2}, Spitzer's formula~\mcite{Sp} was regarded as a remarkable stepping stone in the theory of sums of independent random variables in the
fluctuation theory of probability. It was discovered by the
mathematician Frank Spitzer in 1956. Even though Spitzer's identity in its
original form describes relations in fluctuation theory in
probability, it is better understood in terms of Rota--Baxter
operators. In fact, the very
motivation for Glen Baxter~\mcite{Ba} to introduce this operator was to give a more conceptual proof of
Spitzer's identity.

Let $(A,Q)$ be a Rota-Baxter algebra of weight $\lambda$. Consider the power
series ring  $R:=A[[x]]$ on one variable $x$. Define an
operator
\begin{equation}
 P: R \to R,\quad P(\sum_{n=0}^\infty a_n
x^n)=\sum_{n=0}^\infty Q(a_n) x^n.
\mlabel{eq:comp}
\end{equation}
 Then it is easy to check that $(R,P)$ is
a complete Rota-Baxter algebra with the filtration $R_n:=R x^n$, $n\geq
0$.

The classical form of Spitzer's identity has the following algebraic abstraction~\mcite{Ca,R-S}.

\begin{theorem} {\rm (Spitzer \mcite{R-S})}
Let $(A,Q)$ be a commutative Rota--Baxter $\QQ$-algebra of weight $\lambda = -1$. Then for $a \in A$,
$$
b=\exp\left (-P(\log(1-ax)) \right )
$$
is a solution of the fix point equation
$$
b=1+P(bax).
$$
Thus we have
\begin{equation}
 \exp\left (-P(\log(1-ax)) \right )
    =\sum_{n=0}^\infty x^n \underbrace{P\big( P( P( \cdots (P(a)a ) a ) a) \big)}_{n\mbox{\rm -}{\rm times}}
    \mlabel{eq:si1}
\end{equation}
in the ring of power series $A[[x]]$. \mlabel{thm:Sp1}
\end{theorem}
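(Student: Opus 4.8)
The plan is to deduce the identity from the uniqueness half of Theorem~\mref{thm:atke}. Since $ax\in R_1$, equation~\mref{eq:recurlr} (with $a$ there replaced by $ax$, and $P$ the coefficientwise operator of~\mref{eq:comp}) reads $b=1+P(bax)$ and, by part~\mref{it:atke} of Theorem~\mref{thm:atke}, has a \emph{unique} solution, namely the limit of the iteration $b_{\ell,0}=1$, $b_{\ell,n+1}=1+P(b_{\ell,n}\,ax)$. Unwinding this recursion and reading off the coefficient of $x^n$ shows the solution to be precisely the right-hand side of~\mref{eq:si1}, the $n$-th coefficient being the $n$-fold nested expression $P(P(\cdots P(a)a\cdots)a)$. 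Hence the whole statement reduces to verifying that $b=\exp\!\big(-P(\log(1-ax))\big)$ satisfies the fixed-point equation $b=1+P(bax)$; then~\mref{eq:si1} follows by uniqueness.

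For this verification I would differentiate in $x$. The operator $P$ acts coefficientwise, so it commutes with $\frac{d}{dx}$. Writing $L:=-\log(1-ax)\in R_1$, so that $b=\exp(P(L))$, and setting $w:=\frac{d}{dx}L=a/(1-ax)$, the chain rule gives $b'=b\,P(w)$. Now put $G:=P(abx)$ and $h:=b-G-1$; note $G$ has no constant term and $b(0)=1$, so $h\in R_1$. Differentiating $G$ and substituting $b'=bP(w)$ yields $G'=P(ab)+P\big(abx\,P(w)\big)$, and applying the weight $-1$ Rota--Baxter relation~\mref{eq:RB} to the pair $(abx,\,w)$ turns $P(abx\,P(w))$ into $G\,P(w)-P(Gw)+P(abxw)$. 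The two genuinely new terms collapse via $P(ab)+P(abxw)=P\big(ab(1+xw)\big)=P(bw)$, using $1+xw=(1-ax)^{-1}$ and $ab(1-ax)^{-1}=bw$. Thus $G'=P(bw)+G\,P(w)-P(Gw)$, and subtracting from $b'=bP(w)$ gives $b'-G'=(b-G)P(w)-P((b-G)w)$. Substituting $b-G=1+h$ the constant terms cancel, leaving
\[
 h'=h\,P(w)-P(hw),\qquad h(0)=0.
\]

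Finally I would show $h=0$. Integrating the last display gives $h=\int_0^x\big(h\,P(w)-P(hw)\big)\,dt$, which is legitimate over $\QQ$, so $h$ is a fixed point of an operator that maps $R_1$ into itself and strictly raises the $x$-adic order: whenever $h\in R_1$ the integrand $hP(w)-P(hw)$ again lies in $R_1$, and formal integration increases the order by one. By completeness of $(R,R_n)$ the only such fixed point is $0$, so $b=1+P(bax)$, as required. The main obstacle is the middle step: producing $h'=hP(w)-P(hw)$ requires the Rota--Baxter relation to be applied to exactly the pair $(abx,w)$ and then the commutativity of $A$ together with the specific weight $\lambda=-1$ to make the surviving terms telescope into $P(bw)$. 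That algebraic cancellation, rather than the soft uniqueness and contraction arguments bracketing it, is where the real content lies.
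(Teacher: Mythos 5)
Your proof is correct, but its core verification is genuinely different from the paper's. The paper obtains Theorem~\mref{thm:Sp1} as an immediate specialization of the non-commutative Spitzer identity (Theorem~\mref{thm:Sp3}) applied to the complete Rota--Baxter algebra $A[[x]]$ of Eq.~(\mref{eq:comp}): the uniqueness of the solution of the fixed-point equation is established there exactly as in your first paragraph, but the fact that the exponential solves the equation is proved via Kingman's identity $P(u)^n = P\big(P(u)^n-(-\tilde{P}(u))^n\big)$ (Proposition~\mref{pp:kingman}) together with the $BCH$-recursion $\chi$ (Proposition~\mref{pp:BCH}); in the commutative case $\chi(u)=u$ since all commutator terms of $BCH$ vanish, which is what collapses Theorem~\mref{thm:Sp3} to the stated formula. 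You instead verify the fixed-point property by a direct ODE-style computation: differentiating $b=\exp\big(-P(\log(1-ax))\big)$ in $x$, using that the coefficientwise $P$ commutes with $d/dx$, applying the weight $-1$ Rota--Baxter relation once to $P\big(abx\,P(w)\big)$, and then killing the discrepancy $h$ by an order-raising argument. I checked your cancellations: $P(ab)+P(abxw)=P\big(ab(1+xw)\big)=P(bw)$ does hold since $1+xw=(1-ax)^{-1}$, and the reduction to $h'=hP(w)-P(hw)$ with $h(0)=0$ forces $h=0$ because the integral operator strictly raises $x$-adic order and $\cap_n x^nA[[x]]=0$. The trade-off is clear: your argument is more elementary and self-contained (no Kingman identity, no $BCH$ machinery), but it is tied to the commutative power-series setting where $d/dx$, $\int_0^x$, and the chain rule for $\exp$ are available; the paper's route costs more machinery but proves the statement for arbitrary complete filtered, possibly non-commutative, Rota--Baxter algebras, of which the classical identity is a one-line corollary.
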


To get ourselves acquainted with this seemingly unmotivated identity,
let us consider the case where $P$ is the identity map $\id$. Recall from
Example~\mref{ex:mult} that $\id$ is a Rota-Baxter of weight -1.
Then the left hand side of Eq. (\mref{eq:si1}) becomes the power series
$$ \exp\big( -\log(1-ax)\big)=\exp( \log (1-ax)^{-1})= \frac{1}{1-ax}$$
and the right hand side is
$$ \sum_{n=0}^\infty x^n a^n =\sum_{n=0}^\infty (ax)^n$$
So we have the familiar geometric expansion.

The form of Spitzer's identity in Theorem~\mref{thm:Sp1}
is an immediate consequence of Theorem~\mref{thm:Sp3} applied
to the complete Rota-Baxter algebra $A[[x]]$ in Eq.~\mref{eq:comp}.
A direct proof, using Rota's standard Rota-Baxter algebras, can be found in~\mcite{R-S}. See~\mcite{E-G-M} for further generalizations and their applications  to vertex operator algebras, combinatorial Hopf algebras and the Magnus formula.

\subsubsection{Kingman's theorem}
As a preparation, we prove the following theorem of Kingman~\mcite{Ki}. Define the double product $\dprod$ of the product in a Rota-Baxter algebra $(R,P)$ by
$$ a \dprod b= a P(b)+P(a)b+\lambda ab.$$
\begin{prop}
Let $(R,P)$ be a Rota-Baxter algebra of weight $-1$.
\begin{enumerate}
\item $ a \dprod b =
P(a)P(b)-\tilde{P}(a)\tilde{P}(b),\quad
P(a)P(b)=P\big(P(a)P(b)-\tilde{P}(a)\tilde{P}(b)\big ).$
\mlabel{it:king} \item For $n\geq 2$,
$$ \prod_{i=1}^{n} P(a_i)
= P\bigg( \prod_{i=1}^{n} P(a_i) - \prod_{i=1}^{n}
(- \tilde{P}(a_i))\bigg), \;\; a_i \in A,\; i=1 \dots n.$$
\mlabel{it:n} \item {\rm (Kingman, 1962)~\mcite{Ki}}
\begin{equation}
 P(u)^n =P\big(P(u)^n-
(-\tilde{P}(u))^n\big ),\;\; u\in A. \mlabel{eq:Kingman}
\end{equation}
\mlabel{it:sn}
\end{enumerate}
\mlabel{pp:kingman}
\end{prop}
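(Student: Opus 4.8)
The plan is to prove the three parts in order, treating part~\mref{it:king} as the computational engine that drives the induction in part~\mref{it:n}, and then obtaining part~\mref{it:sn} by specialization. Throughout I use that the weight is $-1$, so that $\tilde{P}=\id-P$.

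For part~\mref{it:king} I would argue by direct expansion. Since $\tilde{P}(a)=a-P(a)$,
\[
\tilde{P}(a)\tilde{P}(b)=(a-P(a))(b-P(b))=ab-aP(b)-P(a)b+P(a)P(b),
\]
so that $P(a)P(b)-\tilde{P}(a)\tilde{P}(b)=aP(b)+P(a)b-ab=a\dprod b$, which is the first identity. For the second, apply $P$ to $a\dprod b=aP(b)+P(a)b-ab$ and read off
\[
P(a\dprod b)=P(aP(b))+P(P(a)b)-P(ab)=P(a)P(b)
\]
directly from the Rota-Baxter equation~(\mref{eq:RB}) with $\lambda=-1$. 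Combining the two displays yields $P(a)P(b)=P\big(P(a)P(b)-\tilde{P}(a)\tilde{P}(b)\big)$.

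For part~\mref{it:n} I would induct on $n$, the base case $n=2$ being exactly part~\mref{it:king}. Write $X_n=\prod_{i=1}^{n}P(a_i)$ and $Y_n=\prod_{i=1}^{n}(-\tilde{P}(a_i))$, so the claim reads $X_n=P(X_n-Y_n)$. The step I expect to carry the argument is that the inductive hypothesis feeds back through $\tilde{P}$ without any product computation: setting $x=X_n-Y_n$, the hypothesis says $P(x)=X_n$, and since $\tilde{P}=\id-P$ this forces
\[
\tilde{P}(x)=x-P(x)=(X_n-Y_n)-X_n=-Y_n.
\]
Now $X_{n+1}=X_n\,P(a_{n+1})=P(x)\,P(a_{n+1})$, and applying part~\mref{it:king} to the pair $x,a_{n+1}$ gives $X_{n+1}=P\big(P(x)P(a_{n+1})-\tilde{P}(x)\tilde{P}(a_{n+1})\big)$. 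Substituting $P(x)=X_n$ and $\tilde{P}(x)=-Y_n$, the argument of $P$ becomes $X_{n+1}+Y_n\tilde{P}(a_{n+1})=X_{n+1}-Y_{n+1}$, because $Y_{n+1}=Y_n\cdot(-\tilde{P}(a_{n+1}))$; note that the left-to-right ordering of factors is preserved throughout, so no commutativity is needed. This closes the induction.

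Finally, part~\mref{it:sn} is the special case $a_1=\cdots=a_n=u$ of part~\mref{it:n}: then $X_n=P(u)^n$ and $Y_n=(-\tilde{P}(u))^n$, so the identity of part~\mref{it:n} reduces at once to Kingman's formula~(\mref{eq:Kingman}). I expect the only genuinely delicate point in the whole argument to be the self-feeding observation $\tilde{P}(X_n-Y_n)=-Y_n$ used in the inductive step; everything else is bookkeeping layered on top of the Rota-Baxter relation.
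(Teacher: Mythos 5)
Your proof is correct, and its overall skeleton (part~\mref{it:king} by direct expansion plus the Rota--Baxter relation, part~\mref{it:n} by induction with part~\mref{it:king} as the engine, part~\mref{it:sn} by specializing $a_i=u$) matches the paper's. The genuine difference is in the mechanism of the inductive step. Writing $X_n=\prod_{i=1}^nP(a_i)$ and $Y_n=\prod_{i=1}^n(-\tilde{P}(a_i))$, both you and the paper need the key fact $\tilde{P}(X_n-Y_n)=-Y_n$ before applying part~\mref{it:king} to the pair $(X_n-Y_n,\,a_{n+1})$. The paper obtains it indirectly: it first records the dual identity (Eq.~(\mref{eq:tn})), derived by observing that $\tilde{P}$ is itself a Rota--Baxter operator of weight $-1$ with $\tilde{\tilde{P}}=P$ and applying the inductive hypothesis to $\tilde{P}$, and then extracts the needed evaluation through a sign computation. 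Your route is shorter and needs no dual operator at all: since the inductive hypothesis says $P(X_n-Y_n)=X_n$ and $\tilde{P}=\id-P$ in weight $-1$, one gets
\begin{equation*}
\tilde{P}(X_n-Y_n)=(X_n-Y_n)-P(X_n-Y_n)=(X_n-Y_n)-X_n=-Y_n
\end{equation*}
immediately. This buys a cleaner induction -- no auxiliary lemma, no observation that $\tilde{P}$ is Rota--Baxter, no sign bookkeeping -- whereas the paper's detour has the side benefit of stating the symmetric factorization identity for $\tilde{P}$ explicitly, which is occasionally useful on its own. Your remark that the left-to-right ordering of factors is preserved (so no commutativity of $R$ is used) is also correct and consistent with the paper.
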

\begin{proof}
\mref{it:king} The first equation follows from $\tilde{P}=I-P$ and then the second equation follows from $P(x)P(y)=P(x\ast_P
y)$.

\mref{it:n} We use induction on $n\geq 2$ with $n=2$ verified in
\mref{it:king}. Assume the equation holds for $n$. Then since
$\tilde{P}$ is also a Rota-Baxter operator of weight $-1$ with
$\tilde{\tilde{P}}=P$, we have
\allowdisplaybreaks{
\begin{eqnarray}
 \prod_{i=1}^{n} \tilde{P}(a_i)
 &=&
\tilde{P}\bigg( \prod_{i=1}^{n} \tilde{P}(a_i) -
\prod_{i=1}^{n} (- P(a_i))\bigg) \notag\\
&=&  (-1)^n\tilde{P}\bigg( \prod_{i=1}^{n}
\tilde{P}(-a_i) - \prod_{i=1}^{n} P(a_i)\bigg). \mlabel{eq:tn}
\end{eqnarray}
} Then we have \allowdisplaybreaks{
\begin{eqnarray*}
 \prod_{i=1}^{n+1} P(a_i) &=&
\big(\prod_{i=1}^n P(a_i)\big ) P(a_{n+1})\\
&=& P\bigg( \prod_{i=1}^{n} P(a_i) -
\prod_{i=1}^{n} (- \tilde{P}(a_i))\bigg)\bigg)P(a_{n+1})\ \
    {\rm (by\ induction)} \\
&=& P\bigg (P\bigg( \prod_{i=1}^{n} P(a_i) -
\prod_{i=1}^{n} (- \tilde{P}(a_i))\bigg)P(a_{n+1}) \\
&& \ \ \ \ \ \ \ \ \ \ \ \ -\tilde{P}\bigg( \prod_{i=1}^{n} P(a_i)
- \prod_{i=1}^{n} (- \tilde{P}(a_i))\bigg)\tilde{P}(a_{n+1})\bigg)
\
\ \ {\rm (by\ \mref{it:king})}\\
&=& P\bigg (\prod_{i=1}^n P(a_i) P(a_{n+1})-
(-1)^{n-1}\prod_{i=1}^n \tilde{P}(a_i) \tilde{P}(a_{n+1})\bigg) \
\ {\rm
(by\ (\mref{eq:tn}))}\\
&=& P\bigg(\prod_{i=1}^{n+1} P(a_i) -
\prod_{i=1}^{n+1}(-\tilde{P}(a_i))\bigg).
\end{eqnarray*}
} This completes the induction.

\mref{it:sn} This follows from \mref{it:n} by taking $a_i=a$.
\end{proof}

\subsubsection{The BCH recursion}
We first recall the following classical result which can be found
in any treatise on Lie groups or at the
web link:\\
http://mathworld.wolfram.com/Baker-Campbell-HausdorffSeries.html

\begin{theorem} {\rm (Baker-Campbell-Hausdorff formula)}
\begin{enumerate}
\item There is a unique power series $BCH(x,y)$ of degree 2 in the
noncommutative power series algebra
$\ZZ\left<\left<x,y\right>\right>$ such that
$$
\exp(x)\exp(y)=\exp\big(x+y+BCH(x,y)\big).
$$
\item For any complete ring $A$ and any $x,y\in A_1$, we have
$$
\exp(x)\exp(y)=\exp\big(x+y+BCH(x,y)\big).
$$
\end{enumerate}
\end{theorem}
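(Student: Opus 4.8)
The plan is to build $BCH(x,y)$ explicitly inside the completed free associative algebra and then transport the resulting universal identity to an arbitrary complete ring by functoriality. Because the series $\exp$ and $\log$ of Eqs.~(\mref{eq:exp}) and (\mref{eq:log}) involve the factorials $1/n!$, I would work in the completed free $\QQ$-algebra $\hat{T}=\QQ\langle\langle x,y\rangle\rangle$, graded by total degree in $x$ and $y$, with the decreasing filtration by the submodules $\hat{T}_n$ consisting of series all of whose terms have degree at least $n$. This makes $(\hat{T},\hat{T}_n)$ a complete filtered $\QQ$-algebra with $x,y\in\hat{T}_1$, so the machinery of $\exp$, $\log$ and Lemma~\mref{lem:inv} applies verbatim.

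For existence I would set $Z:=\log\big(\exp(x)\exp(y)\big)$. Since $\exp(x),\exp(y)\in 1+\hat{T}_1$ and $1+\hat{T}_1$ is a group by Lemma~\mref{lem:inv}, the product lies in $1+\hat{T}_1$ and $Z$ is a well-defined element of $\hat{T}_1$; as $\exp$ and $\log$ are mutually inverse bijections between $\hat{T}_1$ and $1+\hat{T}_1$, we have $\exp(Z)=\exp(x)\exp(y)$. The only computation needed is the degree-one part of $Z$: from
$$\exp(x)\exp(y)=1+(x+y)+w,\qquad w\in\hat{T}_2,$$
and $\log(1+u)=u-\tfrac{1}{2}u^2+\cdots$ with $u=(x+y)+w$, every correction term $u^k$ for $k\geq 2$ lies in $\hat{T}_2$, so $Z=(x+y)+(\text{terms of degree}\geq 2)$. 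I would then define $BCH(x,y):=Z-x-y\in\hat{T}_2$, which by construction satisfies $\exp(x+y+BCH(x,y))=\exp(Z)=\exp(x)\exp(y)$. Uniqueness is immediate: if $B,B'\in\hat{T}_2$ both satisfy the identity, then $\exp(x+y+B)=\exp(x+y+B')$, and applying the injective map $\log$ gives $x+y+B=x+y+B'$, hence $B=B'$. This settles part~(a).

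For part~(b) I would invoke the universal property of the free algebra together with completeness. Given a complete filtered $\QQ$-algebra $A$ and $x_0,y_0\in A_1$, there is a unique continuous $\QQ$-algebra homomorphism $\Phi\colon\hat{T}\to A$ with $\Phi(x)=x_0$ and $\Phi(y)=y_0$: it is defined on noncommutative monomials by substitution and extends continuously because a monomial of degree $n$ maps into $A_n$, so $\Phi(\hat{T}_n)\subseteq A_n$ and infinite sums are sent to convergent sums. Being a continuous algebra homomorphism, $\Phi$ commutes termwise with the power series $\exp$ and $\log$, so applying $\Phi$ to the identity of part~(a) yields $\exp(x_0)\exp(y_0)=\exp\big(x_0+y_0+BCH(x_0,y_0)\big)$, as desired.

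The step I expect to require the most care is the construction of $\Phi$ in part~(b): one must check that the substitution $x\mapsto x_0$, $y\mapsto y_0$ genuinely extends from the free algebra to its completion, i.e.\ that $\Phi$ respects the filtrations ($\Phi(\hat{T}_n)\subseteq A_n$) so that the image of an infinite series converges in $A$, and that this continuity is exactly what lets $\Phi$ be interchanged with the infinite sums defining $\exp$ and $\log$. The purely formal manipulation in part~(a) is routine by comparison. I would also note that, strictly, the coefficients of $BCH$ are rational rather than integral---the factorials force us into $\QQ\langle\langle x,y\rangle\rangle$---so the statement is most naturally read with $\QQ$ in place of $\ZZ$.
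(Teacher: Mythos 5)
Your proof is correct, but there is nothing in the paper to compare it against: the paper does not prove this theorem at all; it is explicitly recalled as a classical result, with a pointer to treatises on Lie groups and an external reference. Taken on its own merits, your argument is the standard formal one and it establishes exactly what is stated. The key steps all check out: $\exp(x)\exp(y)\in 1+\hat{T}_1$ by (the analogue of) Lemma~\mref{lem:inv}, so $Z:=\log(\exp(x)\exp(y))$ is defined; the mutual inverseness of the series in Eqs.~(\mref{eq:exp}) and (\mref{eq:log}) --- a formal identity valid in any complete filtered $\QQ$-algebra --- gives both $\exp(Z)=\exp(x)\exp(y)$ and the injectivity of $\log$ needed for uniqueness; and the substitution homomorphism $\Phi$ in part~(b) exists and is continuous precisely because $\Phi(\hat{T}_n)\subseteq A_n$, which is the point you rightly single out as the one needing care. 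Your correction of $\ZZ$ to $\QQ$ is also right: the paper's own displayed expansion $\frac{1}{2}[x,y]+\frac{1}{12}[[x,y],y]-\cdots$ has non-integral coefficients, and likewise ``complete ring'' in part~(b) must be read as ``complete filtered $\QQ$-algebra,'' consistently with the paper's definition of $\exp$ and $\log$. One caveat on scope: your construction shows $BCH(x,y)$ exists and is unique as a power series of order $\geq 2$, but not the deeper classical fact that it is a Lie series, i.e.\ a sum of iterated commutators as the displayed expansion suggests; that requires Friedrichs' criterion or Dynkin's formula. The statement as formulated does not claim this, and the paper's only later use of $BCH$ --- the $BCH$-recursion of Proposition~\mref{pp:BCH} --- needs only that $BCH$ starts in degree $2$, which your construction provides; so your proof is adequate both for the statement and for its role in the paper.
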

The first few terms of $BCH(x,y)$ are
$$BCH(x,y)=\frac{1}{2}[x,y]+\frac{1}{12}[[x,y],y]-\frac{1}{12}[[x,y],x]-
\frac{1}{48}[y,[x,[x,y]]]-\frac{1}{48}[x,[y,[x,y]]]+\cdots$$ where
$[x,y]=xy-yx$ is the commutator of $x$ and $y$.
\begin{prop}
Let $(R,R_n,P)$ be a complete Rota-Baxter algebra of weight $-1$. There is a unique map $\chi:
R_1 \to R_1$ that satisfies the equation
\begin{equation}
\chi(a)=a -
BCH\big(P(\chi(a)),\tilde{P}(\chi(a))\big)
\mlabel{BCH-recur}
\end{equation}
\mlabel{pp:BCH}
\end{prop}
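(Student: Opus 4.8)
The plan is to reduce the statement to the contraction mapping principle: for each fixed $a\in R_1$ the right-hand side of Eq.~(\mref{BCH-recur}) defines a self-map of the complete metric space $R_1$ that raises filtration degree on differences, hence has a unique fixed point $\chi(a)$. This is exactly the fixed-point iteration already carried out in the proof of Theorem~\mref{thm:atke}\mref{it:atke}, now applied to the BCH operator instead of $b\mapsto 1+P(ba)$.

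First I would fix $a\in R_1$ and define
$$F_a: R_1\to R_1,\qquad F_a(c)=a-BCH\big(P(c),\tilde{P}(c)\big).$$
To see this is well defined, recall that completeness of $(R,P)$ gives $P(R_1)\subseteq R_1$, whence $\tilde{P}(c)=c-P(c)\in R_1$ as well, so both arguments of $BCH$ lie in $R_1$; since $R$ is complete the series $BCH(P(c),\tilde{P}(c))$ converges (its degree-$n$ part lies in $R_n$). Moreover every monomial of $BCH$ has total degree $\geq 2$ in its two arguments, so by $R_pR_q\subseteq R_{p+q}$ the whole correction term lies in $R_2$, giving $F_a(c)\in a+R_2\subseteq R_1$. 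Note also that $R_1=\{x:\|x\|\leq 2^{-1}\}$ is closed in the complete space $R$, hence is itself a complete metric space under the metric (\mref{eq:metric}).

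The heart of the argument is the contraction estimate: if $c-c'\in R_k$ for some $k\geq 1$, then $F_a(c)-F_a(c')\in R_{k+1}$. Here $F_a(c)-F_a(c')=BCH(P(c'),\tilde{P}(c'))-BCH(P(c),\tilde{P}(c))$, and it suffices to treat each term separately. Writing such a term, after expanding the iterated commutators into words, as a product $u_1\cdots u_m$ of length $m\geq 2$ with each $u_j\in\{P(c),\tilde{P}(c)\}\subseteq R_1$, and the corresponding primed word $v_1\cdots v_m$ with $v_j\in\{P(c'),\tilde{P}(c')\}$, the telescoping identity
$$u_1\cdots u_m-v_1\cdots v_m=\sum_{j=1}^m v_1\cdots v_{j-1}\,(u_j-v_j)\,u_{j+1}\cdots u_m$$
exhibits the difference as a sum of terms each carrying a factor $u_j-v_j\in\{P(c-c'),\tilde{P}(c-c')\}\subseteq R_k$ together with $m-1\geq 1$ further factors in $R_1$; hence each such term lies in $R_{k+m-1}\subseteq R_{k+1}$. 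Summing over all (convergent) monomials gives $F_a(c)-F_a(c')\in R_{k+1}$, i.e. $\|F_a(c)-F_a(c')\|\leq\tfrac12\|c-c'\|$, so $F_a$ is a contraction.

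Finally I would invoke the contraction mapping principle on $R_1$: starting from $c_0=a$ and iterating $c_{n+1}=F_a(c_n)$ yields a Cauchy sequence whose limit $\chi(a)$ is the unique solution of $c=F_a(c)$, which is precisely Eq.~(\mref{BCH-recur}). Letting $a$ range over $R_1$ defines $\chi$, and uniqueness of the fixed point for each $a$ gives uniqueness of $\chi$. The only delicate point is the degree-raising estimate for the difference of $BCH$ values; once the telescoping bound above is in place, the remainder is the standard fixed-point machinery already used for Theorem~\mref{thm:atke}.
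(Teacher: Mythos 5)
Your proposal is correct and is essentially the paper's own argument: the paper also constructs $\chi(a)$ by iterating $\chi_{n+1}(a)=a-BCH\big(P(\chi_n(a)),\tilde{P}(\chi_n(a))\big)$ and rests on exactly your key estimate, namely that the map $\Lambda(c):=BCH\big(P(c),\tilde{P}(c)\big)$ sends elements congruent modulo $R_k$ to elements congruent modulo $R_{k+1}$ (because every BCH monomial has degree at least $2$ in arguments lying in $R_1$). The only cosmetic difference is that you package the iteration as the Banach contraction principle and spell out the telescoping justification of the estimate, whereas the paper runs the Cauchy-sequence and uniqueness arguments by hand.
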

$\chi$ was introduced in \mcite{E-G-K2} and was called the
$BCH$-recursion. See~\mcite{Ma} for a more conceptual proof in the context of Lie algebras.

\begin{proof}
$\chi(a)$ is defined to be $\lim_{n \to \infty} \chi_n(a)$ where
recursively, \allowdisplaybreaks{\begin{eqnarray*}
\chi_0(a) &=& a, \\
\chi_{n+1}(a) &=& a - BCH\big(
P(\chi_n(a)),\tilde{P}(\chi_n(a))
\big).
\end{eqnarray*}}
To see why this gives the unique solution to the recursion
relation (\ref{BCH-recur}), we first define, for $a \in R_1$, a
map $\Lambda : R \to R$ \mcite{E-G-K2}
$$
\Lambda(a):=BCH\big(P(a),\tilde{P}(a)\big).
$$
Then for $s \in R_n, n \geq 1$, $\Lambda(a+s)$ is $\Lambda(a)$
plus a sum in which each term has $s$ occurring at least once, and
hence is contained in $R_{n+1}$. Thus we have
\begin{equation}
 \Lambda(a \mod R_n) \equiv \Lambda(a) \mod R_{n+1}.
\mlabel{eq:cong}
\end{equation}
Now we have
$$
\chi_1(a)= a + \Lambda(\chi_0(a)) = a + \Lambda(a) \equiv a \equiv
\chi_0(a) \mod R_{2}.
$$
By induction on $n$ and (\ref{eq:cong}), we have
\begin{eqnarray*}
 \chi_{n+1}(a)& =& a + \Lambda(\chi_n(a)) \\
&\equiv & a + \Lambda(\chi_{n-1}(a) \mod R_{n+1}) \\
& \equiv & a + \Lambda(\chi_{n-1}(a)) \mod R_{n+2}\\
& \equiv & \chi_n(a) \mod R_{n+2}.
\end{eqnarray*}
Thus $\lim_{n\to \infty} \chi_n(a)$ exists and is a solution of
(\ref{BCH-recur}).

Suppose $b$ is another solution. Then, as above, we have
$$\chi_0(a) = a \equiv a + \Lambda(b) \equiv b \mod R_2.$$
Induction on $n$ gives
\begin{eqnarray*}
\chi_{n+1}(a) &=& a+\Lambda(\chi_n(a)) \\
    &\equiv & a + \Lambda(b \mod R_{n+2})\\
    &\equiv & a + \Lambda(b) \mod R_{n+3}\\
    &\equiv & b \mod R_{n+3}.
\end{eqnarray*}
Thus $b=\lim_{n\to \infty} \chi_n(a).$
\end{proof}

\begin{lemma} Let $R$ be a complete filtered
$\bfk$-algebra. Let $K:R \to R$ be a linear map. The map $\chi$ in Eq.~(\ref{BCH-recur}) solves the following recursion
 \begin{equation}
   \chi(u)=u + BCH\big(-K(\chi(u)),u)\big),\;\; u\in R_1.
   \mlabel{BCHrecursion3}
 \end{equation}
\end{lemma}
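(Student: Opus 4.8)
The plan is to show that the single map $\chi$ defined by the fixed-point equation~(\mref{BCH-recur}) satisfies~(\mref{BCHrecursion3}) by transporting both recursions to multiplicative (exponential) form, where the relevant instantiation is $K=P$ (so that $\id-K=\tilde{P}$), $P$ being the weight $-1$ Rota--Baxter operator underlying $\tilde{P}=\id-P$. Write $w=\chi(u)$; by Proposition~\mref{pp:BCH} we have $w\in R_1$, and since $P(R_1)\subseteq R_1$ and $\tilde{P}(R_1)\subseteq R_1$, every element appearing below lies in $R_1$, so the series $\exp$, $\log$ of~(\mref{eq:exp})--(\mref{eq:log}) and the series $BCH$ all converge in the complete filtered algebra $R$.

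First I would rewrite~(\mref{BCH-recur}) as $u=w+BCH(P(w),\tilde{P}(w))$. The key structural input is the weight $-1$ identity $P+\tilde{P}=\id$, which gives $P(w)+\tilde{P}(w)=w$. Feeding this into the defining property of $BCH$, namely $x+y+BCH(x,y)=\log\big(\exp(x)\exp(y)\big)$ for $x,y\in R_1$, the right-hand side collapses, since $w+BCH(P(w),\tilde{P}(w))=\log\big(\exp(P(w))\exp(\tilde{P}(w))\big)$. Hence~(\mref{BCH-recur}) is equivalent to the purely multiplicative factorization
\begin{equation*}
\exp(u)=\exp\big(P(w)\big)\,\exp\big(\tilde{P}(w)\big).
\end{equation*}

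Next I would solve this factorization for the second factor, obtaining $\exp(\tilde{P}(w))=\exp(-P(w))\exp(u)$. Because $\exp$ and $\log$ of~(\mref{eq:exp})--(\mref{eq:log}) are mutually inverse bijections between $R_1$ and $1+R_1$, I may take logarithms and re-expand by $BCH$ to get $\tilde{P}(w)=-P(w)+u+BCH(-P(w),u)$. Finally, substituting $\tilde{P}(w)=w-P(w)$ (again from $P+\tilde{P}=\id$) and cancelling $-P(w)$ yields $w=u+BCH(-P(w),u)$, which is exactly~(\mref{BCHrecursion3}) with $K=P$. Only this forward direction is needed, since the lemma asserts merely that $\chi$ solves the recursion, not uniqueness.

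The step I expect to carry the real content is the collapse of the additive $BCH$-expression into the multiplicative factorization $\exp(u)=\exp(P(w))\exp(\tilde{P}(w))$: this is precisely where the weight $-1$ relation $P+\tilde{P}=\id$ is indispensable, and it is what reveals the superficially different recursions~(\mref{BCH-recur}) and~(\mref{BCHrecursion3}) to be two encodings of the same factorization. The remaining difficulty is purely bookkeeping, namely verifying that $w$, $P(w)$, $\tilde{P}(w)$, and $-P(w)$ all stay in $R_1$, so that each application of $\exp$, $\log$, and $BCH$ is legitimate and each passage between additive and multiplicative form is reversible in the complete filtered setting.
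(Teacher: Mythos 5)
Your proof is correct and follows essentially the same route as the paper's: pass from the additive recursion~(\mref{BCH-recur}) to the multiplicative factorization $\exp(u)=\exp\big(K(\chi(u))\big)\exp\big(\tilde{K}(\chi(u))\big)$, left-multiply by $\exp\big(-K(\chi(u))\big)$, re-expand with $BCH$, and compare exponents using the bijectivity of $\exp$ and $\log$. The only difference is that the paper runs this argument for an arbitrary linear map $K$ with $\tilde{K}:=\id_R-K$, so the identity $K(\chi(u))+\tilde{K}(\chi(u))=\chi(u)$, which you present as the ``indispensable'' weight $-1$ Rota--Baxter input, is in fact purely definitional; your specialization to $K=P$ covers the case used later in Theorem~\mref{thm:Sp3}, but the Rota--Baxter equation itself plays no role anywhere in the argument.
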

\begin{proof}
In general for any $u \in R$  we can write $u = K(u) +
\tilde{K}(u)$ using linearity of $K$. Here $\tilde{K}=\id_R-K$. The map $\chi$ then
implies for $u \in R_1$ that $\exp(u)= \exp\big(K(\chi(u))\big)
\exp\big(\tilde{K}(\chi(u))\big)$. Further,
 \allowdisplaybreaks{
\begin{eqnarray*}
 \exp\big(-K(\chi(u))\big) \exp(u) &=& \exp\big(\tilde{K}(\chi(u))\big)  \\
                                   &=& \exp\big(-K(\chi(u)) + u + BCH(-K(\chi(u)),u)\big).
\end{eqnarray*}}
Bijectivity of $\log$ and $\exp$ then implies, that
 \allowdisplaybreaks{
\begin{eqnarray*}
 \chi(u) - K(\chi(u)) &=& -K(\chi(u)) + u + BCH(-K(\chi(u)),u),
\end{eqnarray*}}
{}from which Equation (\ref{BCH-recur}) follows.
\end{proof}

\subsubsection{Spitzer's identity in the non-commutative case}
\mlabel{ss:spnc}
For $a \in A$, inductively define
\begin{equation}
(P a)^{[n+1]}:=P\big((Pa)^{[n]}\:a\big)\;\; \makebox{and} \;\;(P
a)^{\{n+1\}}:=P\big(a\:(Pa)^{\{n\}}\big)
\mlabel{eq:iter}
\end{equation}
with the convention that $(Pa)^{[1]}=P(a)=(Pa)^{\{1\}}$ and
$(Pa)^{[0]}=1=(Pa)^{\{0\}}$.
\begin{theorem}
Let $(R,R_n,P)$ be a complete filtered Rota--Baxter algebra of
weight $-1$. Let $a \in R_1$.
\begin{enumerate}
\item The equation
\begin{equation}
 b_\ell=1+P(b_\ell a)
\mlabel{eq:recurs}
\end{equation}
has a unique solution
\begin{equation}
b_\ell= \exp\big(-P(\chi(\log (1- a)))\big).
\mlabel{eq:exp1}
\end{equation}
\item The equation
\begin{equation}
b_r=1+\tilde{P}(ab_r) \mlabel{eq:recurs2}
\end{equation}
has a unique solution
\begin{equation}
b_r= \exp\big(-\tilde{P}(\chi(\log (1- a)))\big). \mlabel{eq:exp2}
\end{equation}
\item The Atkinson Factorization~(\mref{eq:atk3}) is given by
\begin{equation}
 1-a = \exp\big(-P(\chi(\log (1- a)))\big)
 \exp\big(\tilde{P}(\chi(\log (1- a)))\big).
\mlabel{eq:expabc}
\end{equation}
\end{enumerate}
\mlabel{thm:Sp3} \mlabel{complRB}
\end{theorem}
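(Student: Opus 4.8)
The plan is to lean on the existence-and-uniqueness result of Theorem~\mref{thm:atke} together with the defining property of the $BCH$-recursion $\chi$ (Proposition~\mref{pp:BCH}). Uniqueness of the solutions of (\mref{eq:recurs}) and (\mref{eq:recurs2}) is exactly Theorem~\mref{thm:atke}\mref{it:atke}, so only existence needs to be checked, i.e.\ that the displayed exponentials solve the two fixed-point equations. Write $u=\log(1-a)\in R_1$ and $w=\chi(u)\in R_1$, and set $b_\ell=\exp(-P(w))$, $b_r=\exp(-\tilde{P}(w))$, together with $c=\exp(\tilde{P}(w))=b_r^{-1}$ and $c'=\exp(P(w))=b_\ell^{-1}$. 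Since the weight is $-1$ we have $\tilde{P}=\id-P$, hence $P(w)+\tilde{P}(w)=w$; feeding this and the defining relation $w=u-BCH(P(w),\tilde{P}(w))$ into the Baker--Campbell--Hausdorff formula gives the master factorization
\[
c'c=\exp(P(w))\exp(\tilde{P}(w))=\exp\!\big(w+BCH(P(w),\tilde{P}(w))\big)=\exp(u)=1-a .
\]
Multiplying on the left by $b_\ell$ and on the right by $b_r$ extracts the one-sided forms $b_\ell(1-a)=c$ and $(1-a)b_r=c'$, i.e.\ $b_\ell a=b_\ell-c$ and $ab_r=b_r-c'$.

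The crux is the following weight-independent identity, which I isolate as a lemma: for every $w\in R_1$,
\[
\tilde{P}\big(\exp(-P(w))\big)+P\big(\exp(\tilde{P}(w))\big)=1 .
\]
Expanding the two exponentials and using the continuity of $P$ and $\tilde{P}$, this reduces to $(-1)^n\tilde{P}(P(w)^n)+P(\tilde{P}(w)^n)=0$ for each $n\ge 1$, the $n=0$ contribution giving the right-hand $1$ because $P(1)+\tilde{P}(1)=1$. The termwise identity follows directly from Kingman's identity: Proposition~\mref{pp:kingman}\mref{it:sn} with $u=w$ states $P(w)^n=P(P(w)^n)-(-1)^nP(\tilde{P}(w)^n)$, and applying $\tilde{P}=\id-P$ to $P(w)^n$ then gives $\tilde{P}(P(w)^n)=(-1)^{n+1}P(\tilde{P}(w)^n)$. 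This is the step I expect to be the main obstacle: since Theorem~\mref{thm:Sp3} does not assume $P^2=P$, one cannot deduce the recursions from uniqueness of a factorization of the form $(1+P(R))(1+\tilde{P}(R))$, so the fixed-point equations must be produced by hand, and this lemma is precisely what turns the master factorization into them.

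Granting the lemma, (a) is a short computation: from $b_\ell a=b_\ell-c$,
\[
1+P(b_\ell a)=1+P(b_\ell)-P(c)=1+\big(b_\ell-\tilde{P}(b_\ell)\big)-P(c)=1+b_\ell-\big(\tilde{P}(b_\ell)+P(c)\big)=b_\ell ,
\]
using $P=\id-\tilde{P}$ and the lemma in the form $\tilde{P}(b_\ell)+P(c)=1$; hence $b_\ell$ solves (\mref{eq:recurs}), establishing (\mref{eq:exp1}). Part (b) is the mirror image: $\tilde{P}$ is again a Rota--Baxter operator of weight $-1$ with $\tilde{\tilde{P}}=P$, so the lemma also gives $P(b_r)+\tilde{P}(c')=1$, and from $ab_r=b_r-c'$ the identical manipulation yields $b_r=1+\tilde{P}(ab_r)$, establishing (\mref{eq:exp2}).

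Finally, (c) is immediate from the pieces above: by Theorem~\mref{thm:Atkinson} the Atkinson factorization is $1-a=b_\ell^{-1}b_r^{-1}$, and substituting the formulas (\mref{eq:exp1}) and (\mref{eq:exp2}) just proved gives the explicit factorization (\mref{eq:expabc}).
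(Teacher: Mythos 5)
Your proof is correct and follows essentially the same route as the paper's own: uniqueness comes from the complete filtration (the paper re-derives it by iterated substitution, you cite Theorem~\mref{thm:atke}), existence combines the BCH-recursion property of $\chi$, which gives $\exp\big(P(\chi(u))\big)\exp\big(\tilde{P}(\chi(u))\big)=\exp(u)=1-a$, with Kingman's identity applied termwise to the exponential series, and your key lemma $\tilde{P}\big(\exp(-P(w))\big)+P\big(\exp(\tilde{P}(w))\big)=1$ is precisely the paper's intermediate identity $\exp(-P(w))=1+P\big(\exp(-P(w))-\exp(\tilde{P}(w))\big)$ rewritten via $\tilde{P}=\id-P$. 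One point worth recording: both your argument and the paper's actually yield $1-a=\exp\big(P(\chi(\log(1-a)))\big)\,\exp\big(\tilde{P}(\chi(\log(1-a)))\big)$, with $+P$ in the first exponent, so the minus sign in the first factor of Eq.~(\mref{eq:expabc}) as printed is a typo (Corollary~\mref{co:expabc} is consistent with the corrected sign).
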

\begin{proof}
We only need to verify for the first equation. The proof for the
second equation is similar.

Since $a$ is in $R_1$ and $P$ preserves the filtration, the series (using notation from Eq.~(\mref{eq:iter}))
$$
b_\ell=1+ P(a)+ P(P(a)a) + \cdots +(Pa)^{[n]} + \cdots
$$
defines a unique element in $R$ and is easily seen to be a solution of (\ref{eq:recurs}).

Conversely, if $c \in R$ is a
solution of (\ref{eq:recurs}), then by iterated substitution, we
have
$$
c=1+P(a)+P(P(a)a)+ \cdots +(Pa)^{[n]}+\cdots.
$$
Therefore, the equation (\ref{eq:recurs}) has a unique solution.

To verify that (\ref{eq:exp}) gives this solution, take $u:=\log
(1- a), \; a\in R_1$. Using (\ref{eq:Kingman}), for our
chosen $b$ we have \allowdisplaybreaks{
\begin{eqnarray*}
\lefteqn{\exp\big(- P(\chi(\log (1- a)))\big)= \exp\big(P(-\chi(u))\big) }\\
&=& \sum_{n=0}^\infty \frac{1}{n!}P\big(-\chi(u)\big)^n \\
&=& 1 +P \bigg(\sum_{n=1}^\infty
\frac{1}{n!}\bigg (\big(P(-\chi(u))\big)^n
            -\big(\tilde{P}(\chi(u))\big)^n\bigg)\bigg)\\
&=& 1 +P \bigg(\sum_{n=0}^\infty
\frac{1}{n!}\big(P(\chi(u))\big)^n
            -\sum_{n=0}^\infty \frac{1}{n!}\big(\tilde{P}
            (\chi(u))\big)^n\bigg)\\
&=& 1 +P\bigg(\exp\big(P(-\chi(u))\big) -\exp\big(\tilde{P}(\chi(u))\big)\bigg).
\end{eqnarray*}}
By the definition of the $BCH$-recursion $\chi$ in equation
(\ref{BCH-recur}), we have \allowdisplaybreaks{\begin{eqnarray*}
&&\exp\big(P(\chi(u))\big)\exp\big(\tilde{P}(\chi(u))\big) \\
&=&\exp\bigg(P\big(\chi(u)\big)+\tilde{P}\big(\chi(u)\big)
            +BCH\big(P(\chi(u)),\tilde{P}(\chi(u))\big)\bigg)\\
&=& \exp \bigg( \chi(u)+BCH\big(P(\chi(u)),\tilde{P}(\chi(u))\big)\bigg)\\
&=& \exp(u).
\end{eqnarray*}}
Thus
\allowdisplaybreaks{\begin{eqnarray*}
&&\exp\big(-P(\chi(\log (1- a)))\big)\\
&=& 1+ P\bigg (\exp\big(P(-\chi(u))\big)-\exp\big(P(-\chi(u))\big)\exp(u)\bigg)\\
&=& 1+ P\bigg (\exp\big(P(-\chi(u))\big)\big(1-\exp(u)\big)\bigg)\\
&=& 1+ P\bigg (\exp\big(-P(\chi(\log(1- a)))\big)\big(1-\exp(\log(1- a))\big)\bigg )\\
&=& 1+P\bigg (\exp\big(-P(\chi(\log(1-
a))\big)\;a\bigg)
\end{eqnarray*}}
This verifies the first equation.
\end{proof}

Now apply Theorem~\mref{thm:Sp3} to the case when $R=\Hom(H,A)$ where $H$ is a connected filtered cograded bialgebra (hence a Hopf algebra) and $(A,Q)$ is a commutative Rota-Baxter algebra of weight $\lambda=-1$ with $Q^2=Q$, as we did in \S \mref{ss:atabc}. We obtain
\begin{coro}
Let $\phi: H\to A$ be a character. Then in the \ABD of $\phi=\phi_-^{\ast (-1)}\ast \phi_+$ in Eq.~{\rm(}\mref{eq:decom}{\rm)}, we have the explicit expressions
\begin{eqnarray*}
\phi_-&=& \exp_\ast\big(-P(\chi(\log_\ast (e- \phi)))\big), \\
\phi_+&=& \exp_\ast\big(\tilde{P}(\chi(\log_\ast (e-\phi)))\big).
\end{eqnarray*}
\mlabel{co:expabc}
\end{coro}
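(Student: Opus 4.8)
The plan is to read both factors straight off the explicit Atkinson factorization of Theorem~$\mref{thm:Sp3}$, using the two identifications already made in \S~$\mref{ss:atabc}$. Recall the setup there: with $H$ a connected filtered cograded bialgebra and $(A,Q)$ a commutative Rota--Baxter algebra of weight $-1$ satisfying $Q^2=Q$, the convolution algebra $R=\Hom(H,A)$ equipped with $P(f)(x)=Q(f(x))$ and the filtration $R_n=\{f\mid f(H^{(n-1)})=0\}$ is a complete filtered Rota--Baxter algebra of weight $-1$ with $P^2=P$. Put $a:=e-\phi$; since $\phi$ is a character, $a\in R_1$, so Theorem~$\mref{thm:Sp3}$ applies to this $a$. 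The content of \S~$\mref{ss:atabc}$ is precisely that the two ABD factors are $\phi_-=b_\ell$ and $\phi_+=c_r=b_r^{\ast(-1)}$, where $b_\ell$ is the unique solution of the fixed-point equation~$\mref{eq:recurs}$, $b_\ell=1+P(b_\ell a)$, and $b_r$ is the unique solution of~$\mref{eq:recurs2}$, $b_r=1+\tilde{P}(ab_r)$.

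With these identifications the statement is a substitution. First I would treat $\phi_-$: by Theorem~$\mref{thm:Sp3}$(a), i.e.\ Eq.~$\mref{eq:exp1}$, the solution of~$\mref{eq:recurs}$ is $b_\ell=\exp_\ast(-P(\chi(\log_\ast(1-a))))$. Now $1-a=e-(e-\phi)=\phi$, and expanding the logarithm series gives $\log_\ast(1-a)=-\sum_{n\ge 1}(e-\phi)^{\ast n}/n$, which is exactly the element written $\log_\ast(e-\phi)$ in the statement (equivalently $\log_\cprod\phi$ of Proposition~$\mref{groupchar}$). Hence $\phi_-=b_\ell=\exp_\ast(-P(\chi(\log_\ast(e-\phi))))$, the first asserted formula.

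For $\phi_+$ I would apply Theorem~$\mref{thm:Sp3}$(b), i.e.\ Eq.~$\mref{eq:exp2}$, obtaining $b_r=\exp_\ast(-\tilde{P}(\chi(\log_\ast(e-\phi))))$, and then invert. Since $\tilde{P}(\chi(\log_\ast(e-\phi)))\in R_1$ and $R$ is complete, $\exp_\ast$ restricts to a bijection $R_1\to e+R_1$ with $\exp_\ast(u)^{\ast(-1)}=\exp_\ast(-u)$; therefore $\phi_+=c_r=b_r^{\ast(-1)}=\exp_\ast(\tilde{P}(\chi(\log_\ast(e-\phi))))$, the second asserted formula. As an independent check one can match both expressions against the recursions~$\mref{eq:phi-}$ and~$\mref{eq:phi+}$ in lowest filtration degree: for primitive $x$ one finds $\phi_-(x)\equiv -Q(\phi(x))$ and $\phi_+(x)\equiv \tilde{Q}(\phi(x))$, which are the leading terms of the two exponentials.

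I expect no deep obstacle, since all the analytic input (Kingman's identity~$\mref{eq:Kingman}$, the $BCH$-recursion of Proposition~$\mref{pp:BCH}$, and Spitzer's identity in Theorem~$\mref{thm:Sp3}$) is already established; the Corollary is essentially bookkeeping. The one step needing genuine care, and the likely source of a sign slip, is the inversion producing $\phi_+$: because $\phi_+=b_r^{\ast(-1)}$ rather than $b_r$, and because in the decomposition $\phi=\phi_-^{\ast(-1)}\ast\phi_+$ of Eq.~$\mref{eq:decom}$ it is $\phi_-^{\ast(-1)}$ (not $\phi_-$) that appears as the left factor, one must resist simply reading the two exponential factors off the combined factorization~$\mref{eq:expabc}$. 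The safe route is to take $\phi_-$ from part (a) and $\phi_+$ from part (b) after explicitly inverting $b_r$, keeping the weight-$(-1)$ convention $\tilde{P}=\id-P$ and the substitution $a=e-\phi$ consistent throughout.
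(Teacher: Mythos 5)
Your proof is correct and is essentially the paper's own argument: the paper obtains the corollary simply by applying Theorem~\mref{thm:Sp3} to the complete Rota--Baxter algebra $R=\Hom(H,A)$ with $a=e-\phi$ and invoking the identifications $\phi_-=b_\ell$ and $\phi_+=c_r=b_r^{\ast(-1)}$ established in \S~\mref{ss:atabc}, which is exactly what you do. Your warning against reading the two factors directly off Eq.~(\mref{eq:expabc}) is also well founded, since that display carries a sign slip (consistency with Atkinson's $1-a=b_\ell^{\ast(-1)}\ast b_r^{\ast(-1)}$ requires its first factor to be $\exp_\ast\big(+P(\chi(\log_\ast(1-a)))\big)$), whereas your route through parts (a) and (b) with explicit inversion of $b_r$ produces the corollary's correct signs.
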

We also obtain the following generalization of the classical Spitzer's identity in Eq.~(\mref{eq:si1}).
\begin{coro}
Let $(R,P,R_n)$ be a complete filtered Rota--Baxter algebra. For
$a \in R_1$, we have
\begin{equation}
\sum_{n=0}^\infty \big(Pa\big)^{[n]}
    = \exp\big(-P(\chi(\log (1+\lambda a)))\big)
\mlabel{eq:Spitzer1}
\end{equation}
\begin{equation}
\sum_{n=0}^\infty \big(\tilde{P}a\big)^{\{n\}}
    = \exp\big(-\tilde{P}(\chi(\log (1+\lambda a)))\big)
\mlabel{eq:Spitzer2}
\end{equation}
\mlabel{co:Spitzer}
\end{coro}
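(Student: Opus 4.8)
The plan is to read both sides of each identity as two descriptions of the \emph{same} unique solution of an Atkinson fixed-point equation, so that the corollary falls out of Theorem~\mref{thm:Sp3} with essentially no new computation. Consider first the case of weight $\lambda=-1$, where $1+\lambda a=1-a$. In the proof of Theorem~\mref{thm:Sp3} it is shown, by iterated substitution into Eq.~(\mref{eq:recurs}), that the series $1+P(a)+P(P(a)a)+\cdots$ is the unique solution $b_\ell$ of $b_\ell=1+P(b_\ell a)$; by the iteration of Eq.~(\mref{eq:iter}) this series is precisely $\sum_{n=0}^\infty (Pa)^{[n]}$. The same theorem identifies that unique solution with the closed form $\exp\big(-P(\chi(\log(1-a)))\big)$ of Eq.~(\mref{eq:exp1}). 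Equating the two expressions for $b_\ell$ gives Eq.~(\mref{eq:Spitzer1}) when $\lambda=-1$.

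The second identity Eq.~(\mref{eq:Spitzer2}) is obtained identically from the $b_r$-part of Theorem~\mref{thm:Sp3}. Here the point to keep straight is the bookkeeping of the bracket: $\sum_{n=0}^\infty(\tilde{P}a)^{\{n\}}$ denotes the $\{\,\}$-iteration of Eq.~(\mref{eq:iter}) with $P$ replaced throughout by $\tilde{P}$, and by the same iterated-substitution argument this series is the unique solution of $b_r=1+\tilde{P}(ab_r)$ in Eq.~(\mref{eq:recurs2}). Theorem~\mref{thm:Sp3} equates this with $\exp\big(-\tilde{P}(\chi(\log(1-a)))\big)$ from Eq.~(\mref{eq:exp2}), yielding Eq.~(\mref{eq:Spitzer2}) for $\lambda=-1$. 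Thus for weight $-1$ the corollary is a direct re-reading of Theorem~\mref{thm:Sp3}, and it is exactly the non-commutative refinement (via the $BCH$-recursion $\chi$) of the classical Spitzer identity Eq.~(\mref{eq:si1}), in which $\chi$ collapses to the identity in the commutative setting.

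For a general weight $\lambda\neq 0$ I would reduce to the case just treated by rescaling the operator. Setting $Q:=-\lambda^{-1}P$ makes $(R,Q,R_n)$ a complete filtered Rota--Baxter algebra of weight $-1$, with $\tilde{Q}=\id_R-Q=-\lambda^{-1}\tilde{P}$; moreover the left-hand series $\sum_{n=0}^\infty(Pa)^{[n]}$ is unchanged, since the iterated substitution proving it solves $b_\ell=1+P(b_\ell a)$ never invokes the weight. Writing $P=-\lambda Q$ turns this fixed-point equation into $b_\ell=1+Q\big(b_\ell(-\lambda a)\big)$, the weight-$(-1)$ equation for $Q$ with input $-\lambda a\in R_1$, whose closed form is supplied by the weight-$(-1)$ result with $\log\big(1-(-\lambda a)\big)=\log(1+\lambda a)$. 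The main point requiring care — and the only real obstacle in the whole argument — is threading the scalar $-\lambda^{-1}$ and the recursion $\chi$ correctly through this substitution, so that the factors land to give exactly $\exp\big(-P(\chi(\log(1+\lambda a)))\big)$ and $\exp\big(-\tilde{P}(\chi(\log(1+\lambda a)))\big)$; everything else is the tautological equating of two names for one element.
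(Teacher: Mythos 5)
Your first two paragraphs are exactly the paper's proof: both sides of each identity solve the fixed point equation (\mref{eq:recurs}) (resp.\ (\mref{eq:recurs2})), whose solution is unique by Theorem~\mref{thm:Sp3}, so they coincide.

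Your third paragraph addresses a case the paper's own proof never touches, and strictly speaking it cannot be completed within the paper's definitions: the recursion $\chi$ of Proposition~\mref{pp:BCH}, and Theorem~\mref{thm:Sp3} itself, are only defined for weight $-1$, so the corollary as stated only parses with $\lambda=-1$, where $1+\lambda a=1-a$; the symbol $\lambda$ is carried along notationally from the general-weight versions in~\mcite{E-G-K2,E-G-K3}. Your rescaling $Q:=-\lambda^{-1}P$ is the standard route to a genuine general-weight statement, and the ``obstacle'' you flag does resolve, but only if one \emph{also} rescales the recursion: writing $\chi_Q$ for the weight $-1$ recursion attached to $Q$, your reduction yields
$b_\ell=\exp\big(-Q(\chi_Q(\log(1+\lambda a)))\big)=\exp\big(\lambda^{-1}P(\chi_Q(\log(1+\lambda a)))\big)$,
and this equals $\exp\big(-P(\chi_\lambda(\log(1+\lambda a)))\big)$ precisely when one sets $\chi_\lambda:=-\lambda^{-1}\chi_Q$, i.e., replaces Eq.~(\mref{BCH-recur}) by $\chi_\lambda(u)=-\lambda^{-1}\big(u-BCH(P(\chi_\lambda(u)),\tilde{P}(\chi_\lambda(u)))\big)$, which collapses to the paper's recursion at $\lambda=-1$. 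Since the paper never defines $\chi$ at weight $\lambda\neq-1$, this is a harmless sharpening on your part rather than a gap relative to the paper; for the statement as the paper intends and proves it, your first two paragraphs already suffice.
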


\begin{proof}
Both sides of (\ref{eq:Spitzer1}) are solutions of
(\ref{eq:recurs}). This proves (\ref{eq:Spitzer1}) by the
uniqueness of the solution to (\mref{eq:recurs}).

The proof of (\ref{eq:Spitzer2}) is the same, by considering
solutions of the recursive equation (\ref{eq:recurs2})
\end{proof}

\section{Renormalization of divergent integrals}
\mlabel{sec:int}
\subsection{A very rough idea of renormalization}
\mlabel{ss:idea}
Very roughly speaking, the method of renormalization viewed in the framework of Connes and Kreimer can be described as follows.
Let
$$\calf = \{ f_\Gamma\ | \ \Gamma\in \calg\}$$
be a set of formal expressions, such as formal integrals or formal summations, indexed by a set $\calg$. The expressions are formal in the sense that they are divergent that cannot be cured by traditional mathematical methods such as analytic continuation or a limit process as for improper integrals. To apply the renormalization method, we introduce two algebraic structures from the given data. First from the index set $\calg$, we define a Hopf algebra structure on the free $\bfk$-module
$$ H_\calg:= \oplus_{\Gamma\in \calg} \bfk \Gamma.$$
Examples of such Hopf algebras include the Hopf algebras of rooted trees of Kreimer~\mcite{Kr}, of Feynman graphs of Connes and Kreimer~\mcite{C-K1}, and of quasi-shuffle product of Hoffman~\mcite{Ho2} in the work of Guo and Zhang~\mcite{G-Z,G-Z2}, and of Manchon and Payche~\mcite{M-P2}.
Next on the set $\calf$, we define a Rota-Baxter algebra structure, through ``deforming" the formal expressions
$f_\Gamma$ to a function $f_\Gamma(\vep)$ by introducing a new parameter $\vep$, such that $f_\Gamma(\vep)$ is well-defined except when $\vep\to 0$ which returns $f_\Gamma(\vep)$ to the original formal expression. Because of this, the regularized expression $f_\Gamma(\vep)$ has a Laurent series expansion in $\bfk[\vep^{-1},\vep]]$, equipped with the Rota-Baxter operator $P$ introduced in
Example~\mref{ex:lau}.
The two algebraic structures should be compatible in the sense that the index map
$$ \Phi: \calg \to \calf, \quad \Phi(\Gamma)=f_\Gamma, $$
extends to an algebra homomorphism
$$ \phi: H_\calg = \bigoplus_{\Gamma\in \calg} \bfk\, \Gamma \longrightarrow \sum_{\Gamma\in \calg} \bfk\, f_\Gamma(\vep) \subseteq \CC[\vep^{-1},\vep]].$$
Note that the coproduct on $H_\calg$ and the Rota-Baxter operator on $\bfk[\vep^{-1},\vep]]$ are not used in this algebra homomorphism. These two extra structures give the \ABD of $\phi$ stated in Theorem~\mref{thm:algBi}:
\begin{equation}
\phi = \phi_-^{\cprod (-1)} \cprod \phi_+
\mlabel{eq:abc2}
\end{equation}
that decomposes $\phi: H_\calg \to \CC[\vep^{-1},\vep]]$
into the ``converges part"
$$\phi_+:H_\calg \to \CC[[\vep]]$$
such that $\phi_+(\Gamma;\vep)$ is well-defined when $\vep=0$ and the ``divergent part"
$$\phi_-: H_\calg \to \CC[\vep^{-1}]$$
that is responsible for the divergency of $\phi(\Gamma)$. We summarize this in the following diagram.
\begin{equation}
\xymatrix{
&& \CC\, \calg \ar^{\Phi}[rr] \ar^{\phi_+}[dd] \ar^{\phi}[ddrr]
    \ar_{\bar{\Phi}}[ddll] &&
            \bigoplus_{\Gamma\in \calg} \CC f_\Gamma
           \\
&&&&\\
\CC && \CC[[\vep]] \ar@{^{(}->}[rr] \ar_{\vep=0}[ll] && \CC[\vep^{-1},\vep]]  \ar_{\vep=0}[uu]
}
\mlabel{diag:abc}
\end{equation}
Note the difference between the two maps that take $\vep=0$. The one on the right is not well-defined, reflecting the divergent nature of the formal expressions. The one on the left is well-defined, giving the {\bf renormalized values} of the formal expressions.

As the reader will immediately point out, the renormalized value $\bar{\Phi}(\Gamma)$ for $f_\Gamma$ depends on several ingredients of the renormalization process, especially the regularization $\phi$. How the renormalized values depend on the regularization is an intriguing and hard problem. In the case of quantum field theory renormalization, it was verified by the physicists that the renormalized value of a Feynman diagram does not depend on the choice of a renormalization process that observes basis physical restrains. On the other hand, the mathematical study of renormalization is still in its early stage. Its further study should shed new light on the understanding of the renormalization in physics.

In this and the next sections, we give two mathematical applications of the renormalization method. The first example can be regarded as a simplification of the renormalization of Feynman integrals. See~\mcite{C-K1} for a related discussion.

\subsection{The Hopf algebra of rooted trees}
\mlabel{sec:treeh}
We now introduce one of the primary examples of Hopf algebras for physics applications, the Hopf algebra of rooted trees.
It was introduced in~\mcite{Kr,C-K0} as a toy model of the Hopf
algebra of Feynman graphs in QFT to study renormalization of
perturbative QFT. It is also related to the
Hopf algebras of rooted trees of Grossman-Larson~
\mcite{G-L} and of Loday-Ronco~\mcite{L-R}.

A rooted tree is a connected and simply-connected
set of vertices and oriented edges such that there is precisely
one distinguished vertex, called the root, with no incoming edge.
A rooted tree is called non-planar if the branches of the same vertex can be permuted.

$$
  \ta1 \;\;\;\; \tb2 \;\;\;\; \tc3 \;\;\;\; \td31 \;\;\;\; \te4 \;\;\;\; \tf41
   =\tg42 \;\;\;\; \th43 \;\;\;\; \ti5 \;\;\;\; \tj51 = \tk52
  \;\;\;\; \cdots \;\;\;\; \tp56 = \tq57 \;\;\; \cdots
$$
Here the term non-planar means different embeddings of a rooted tree into the plane are identified.

\begin{remark}{\rm
For the rest of this paper, a tree means a non-planar rooted tree unless otherwise stated. The same applies to the concept of a forest introduced below.
}
\end{remark}
Let $\calt$ be the set of (isomorphic classes of) rooted trees. Let
$\calh_\calt$ be the commutative polynomial algebra over $\QQ$
generated by $\calt$: $\calh_\calt=\QQ[\calt]$. Monomials of trees
are called forests. The {\bf depth} $\depth(F)$ of a forest is the longest path from one of its roots to the leafs.  We will define a coalgebra structure on
$\calh_\calt$. A subforest of a tree $T$ consists of a set of vertices of $T$ together with their descendants and edges connecting all these vertices.

The {\bf coproduct} is then defined as follows. Let $\calf_T$ be the
set of subforests of the rooted tree $T$, including the empty subforest, identified with $1$, and the full forest. Define
$$ \Delta(T)= \sum_{F\in \calf_T} F \otimes (T/F). $$
The quotient $T/F$ is obtained by removing the subforest $F$ and edges connecting the subforest to the rest of the tree. We use the convention that if $F$ is the empty subforest, then $T/F=T$, and if $F=T$, then $T/F=1$. For example,
\allowdisplaybreaks{
\begin{eqnarray}
 \Delta(\ta1)            &=& \ta1  \otimes 1 + 1 \otimes \ta1
    \notag \\
 \Delta\big(\!\!\begin{array}{c}
                 \\[-0.4cm]\tb2 \\
               \end{array}\!\!\big)    &=&
                         \begin{array}{c}
                           \\[-0.4cm]\tb2 \\
                         \end{array} \!\! \otimes 1
                         + 1 \otimes \!\!
                         \begin{array}{c}
                           \\[-0.4cm]\tb2 \\
                         \end{array}
                               + \ta1 \otimes \ta1 \notag\\
 \Delta\Big(\!\!\begin{array}{c}
                   \\[-0.4cm]\tc3 \\
                 \end{array}\!\!\Big)    &=&
                            \begin{array}{c}
                             \\[-0.4cm]\tc3 \\
                            \end{array}\!\! \otimes 1
                         + 1 \otimes \!\!
                         \begin{array}{c}
                           \\[-0.4cm]\tc3 \\
                         \end{array}\!\!
                          + \ta1 \otimes\!\!
                         \begin{array}{c}
                           \\[-0.4cm]\tb2 \\
                         \end{array} +
                         \begin{array}{c}
                           \\[-0.4cm]\tb2 \\
                         \end{array}\!\! \otimes \ta1 \mlabel{treeEx} \\
 \Delta\big(\:\td31\big) &=&  \begin{array}{c}
                                 \\[-0.4cm]\td31 \\
                              \end{array}\!\! \otimes 1
                               + 1 \otimes
                              \begin{array}{c}
                                 \\[-0.4cm]\td31 \\
                              \end{array}\!\! +
                                  2\ta1 \otimes \!\!
                                       \begin{array}{c}
                                         \\[-0.4cm]\tb2 \\
                                       \end{array} + \ta1\ta1\otimes \ta1
                                       \notag \\
 \Delta\bigg(\!\! \begin{array}{c}
                   \\[-0.4cm]\te4 \\
                  \end{array}\!\!\bigg)
                     &=& \begin{array}{c}
                           \\[-0.4cm]\te4 \\
                         \end{array} \!\! \otimes 1
                         + 1 \otimes \!\!
                         \begin{array}{c}
                           \\[-0.4cm]\te4 \\
                         \end{array}
                         + \ta1 \otimes \!\!
                         \begin{array}{c}
                           \\[-0.4cm]\tc3 \\
                         \end{array}
                            + \begin{array}{c}
                        \\[-0.4cm]\tb2 \\
                         \end{array} \!\! \otimes \!\!
                         \begin{array}{c}
                        \\[-0.4cm]\tb2 \\
                         \end{array} +
                         \begin{array}{c}
                        \\[-0.4cm]\tc3 \\
                         \end{array}\!\! \otimes \ta1 \notag
\end{eqnarray}}

We then extend $\Delta$ to $\calh_\calt$ by multiplicity with the
convention $\Delta(1)=1\ot 1$. Then $\Delta$ is an algebra homomorphism in the sense of Eq.~(\mref{eq:md0}). Also define
$$\vep: \calh_\calt\to \bfk$$
by $\vep(T)=0$ for $T\in \calt$, $\vep(1)=1$ and extending by
multiplicity. Then $\vep$ is also an algebra homomorphism. It is
easy to verify its compatibility with $\Delta$. Further, define
the degree of a tree and a forest (i.e., monomial in $\CC[\calt]$)
to be its number of vertices. Then the grading defined by the
degree is preserved by both the product and coproduct. It is also
connected. Thus to prove that $\calh_\calt$ is a bialgebra and
hence a Hopf algebra, we only need to verify the coassociativity
of $\Delta$. For this we follow the standard approach.
See~\mcite{Kr,F-G1}

Let $B^+:\calh_\calt \to \calh_\calt$ be the linear
map given by taking the product $T_1\cdots T_k$ of $k$ trees
$T_1,\cdots, T_k$ to the tree $T$ consisting of a new vertex,
subtrees $T_i$ and an edge from the new vertex to the root of each $T_i$. The map $B^+$ is also called the {\bf grafting operator} in combinatorics.
Then~\mcite{C-K0}
$$\Delta B^+=B^+\ot 1+ (\id_H \ot B^+)\Delta.$$

\begin{theorem}
$\calh_\calt$ is a connected graded Hopf algebra. \mlabel{thm:tco}
\end{theorem}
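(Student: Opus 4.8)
The plan is to assemble the bialgebra axioms and then invoke Theorem~\mref{thm:pt}. The commutative algebra structure $\calh_\calt=\QQ[\calt]$, with unit the inclusion $\QQ\hookrightarrow\calh_\calt$, is clear, and it was already observed that $\Delta$ and $\vep$ are algebra homomorphisms, that the grading by number of vertices is respected by both the product and $\Delta$, and that $\calh_\calt$ is connected since its degree-zero component is $\QQ\cdot 1$. Hence the only bialgebra axioms still needing proof are the counit property and the coassociativity of $\Delta$; once these hold, $\calh_\calt$ is a connected graded bialgebra, and Theorem~\mref{thm:pt} upgrades it automatically to a Hopf algebra. The counit property is immediate from the definition $\Delta(T)=\sum_{F\in\calf_T}F\ot(T/F)$: in $(\vep\ot\id)\Delta(T)$ only the empty subforest $F=1$ contributes, returning $T$, and dually in $(\id\ot\vep)\Delta(T)$ only the full subforest $F=T$ survives, again returning $T$.

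For coassociativity I would first reduce to trees. Both $(\Delta\ot\id)\Delta$ and $(\id\ot\Delta)\Delta$ are composites of the algebra homomorphisms $\Delta$, $\Delta\ot\id$, $\id\ot\Delta$, hence are themselves algebra homomorphisms $\calh_\calt\to\calh_\calt^{\ot 3}$; since $\calh_\calt$ is generated as an algebra by $\calt$, it suffices to verify $(\Delta\ot\id)\Delta(T)=(\id\ot\Delta)\Delta(T)$ for each tree $T$. I would prove this by induction on the number of vertices of $T$. Every nonempty tree is uniquely of the form $T=B^+(F)$ with $F$ a forest whose tree components each have strictly fewer vertices than $T$, and the base case $T=\onetree=B^+(1)$, where $\Delta(T)=T\ot 1+1\ot T$, is checked directly.

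For the inductive step, write $\Delta(F)=\sum_{(F)}F_{(1)}\ot F_{(2)}$ and apply the grafting relation $\Delta B^+=B^+\ot 1+(\id_H\ot B^+)\Delta$ to obtain $\Delta(T)=T\ot 1+\sum_{(F)}F_{(1)}\ot B^+(F_{(2)})$. Applying $(\id\ot\Delta)$ and $(\Delta\ot\id)$ to this expression, using the grafting relation a second time together with $\Delta(1)=1\ot 1$, one finds that both iterated coproducts yield the common terms $T\ot 1\ot 1+\sum_{(F)}F_{(1)}\ot B^+(F_{(2)})\ot 1$, plus a remaining term equal to $(\id\ot\id\ot B^+)$ applied to $(\id\ot\Delta)\Delta(F)$ and to $(\Delta\ot\id)\Delta(F)$, respectively. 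These two remainders coincide because coassociativity holds on $F$: each tree component of $F$ has fewer vertices than $T$, so coassociativity holds on it by the induction hypothesis, and it then holds on the monomial $F$ since the two triple-coproduct maps are algebra homomorphisms. This closes the induction and establishes coassociativity.

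The main obstacle is exactly this coassociativity computation: the bookkeeping in propagating $B^+$ through the two iterated coproducts and recognizing that the only surviving discrepancy is $(\id\ot\id\ot B^+)$ applied to the inductive coassociativity identity for $F$. Everything else—the counit axiom, the compatibility of the grading with product and coproduct, and connectedness—is routine, and the passage from a connected graded bialgebra to a Hopf algebra is furnished by Theorem~\mref{thm:pt}.
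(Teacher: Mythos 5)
Your proposal is correct and follows essentially the same route as the paper: after noting that the algebra structure, grading, connectedness, and counit are routine, both arguments reduce to coassociativity and prove it by induction on the number of vertices, writing $T=B^+(F)$ and propagating the grafting relation $\Delta B^+=B^+\ot 1+(\id_H\ot B^+)\Delta$ through the iterated coproducts so that the only discrepancy is $(\id\ot\id\ot B^+)$ applied to the coassociativity identity for $F$, which holds by the inductive hypothesis (extended from trees to forests via the algebra-homomorphism property), before invoking Theorem~\mref{thm:pt} to pass from a connected graded bialgebra to a Hopf algebra.
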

\begin{proof}
By the remark before the theorem, it remains to show that
the map $\Delta$ is coassociative.

Let $H_n$ be the polynomial subalgebra of $\calh_\calt$ generated
by rooted trees with at most $n$ vertices. Then
$\calh_\calt=\cup_{n\geq 0} H_n$. We use induction on $n\geq 0$ to
prove that
$$(\id_H\ot
\Delta)\Delta=(\Delta\ot \id_H)\Delta$$ on $H_n.$ For $n=1$, we
have
\begin{equation}
(\id_H\ot\Delta)\Delta (\bullet)=\bullet\ot 1\ot 1+1\ot \bullet
\ot 1 +1\ot 1\ot \bullet = (\Delta\ot 1)\Delta(\bullet).
\mlabel{eq:b}
\end{equation}
Assume
that $\Delta$ is coassociative on $H_n$ and let $T$ be a rooted
tree with $n+1$ vertices. Then $T=B^+(a)$ where $a=T_1\cdots T_k$
with $T_i$ the subtrees of $T$ immediately descending from the
root of $T$. Then by Eq.~\mref{eq:b} and induction hypothesis,
we have
\begin{align*}
(\id_H\ot \Delta)\Delta(T)&=T\ot 1\ot +(\id_H\ot (\Delta
B^+))\Delta(a)\\
&=T\ot 1\ot 1+(\id_H\ot \id_H\ot B^+)(\Delta\ot
\id_H)\Delta + ((\id_H\ot B^+)\Delta(a))\ot 1\\
&=T\ot 1\ot 1+(\id_H\ot \id_H\ot B^+)(\id_H\ot \Delta)\Delta
+ ((\id_H\ot B^+)\Delta(a))\ot 1\\
&=T\ot 1\ot 1+ (\Delta\ot B^+)\Delta(a)
    +\big((\id_H\ot B^+)\Delta(a)\big)\ot 1\\
&= (\Delta\ot \id_H)\big(T\ot 1+(\id_H\ot B^+)\Delta(a)\big)\\
&= (\Delta\ot \id_H)\Delta(T).
\end{align*}
\end{proof}

We next characterize the set of rooted forests by a universal property. The following concept is the commutative version of the concept of operated semigroups and operated algebras introduced in~\mcite{Guop}.

\begin{defn}{\rm
A {\bf commutative \mapped semigroup} is a commutative semigroup $U$ together with an operator
$\alpha: U\to U$. A morphism between commutative \mapped
semigroups $(U,\alpha)$ and $(V,\beta)$ is a
semigroup homomorphism $f :U\to V$ such that $f \circ \alpha=
\beta \circ f,$ that is, such that the following diagram
commutes.
$$
 \xymatrix{
            U\ar[rr]^\alpha \ar[d]^f && U \ar[d]_f \\
            V\ar[rr]^\beta           && V}
$$
}
\mlabel{de:mapset}
\end{defn}
When a semigroup is replaced by a commutative monoid we obtain the concept of a {\bf commutative \mapped monoid}. Let $\bfk$ be a commutative ring. We similarly define the concepts of a {\bf commutative \mapped $\bfk$-algebra} or
{\bf commutative \mapped nonunitary $\bfk$-algebra}.

A {\bf free commutative \mapped semigroup} on a set $X$ is a commutative \mapped semigroup
$(U_X,\alpha_X)$ together with a map $j_X:X\to U_X$ with the
property that, for any commutative \mapped semigroup $(V,\beta)$ and
any map $f:X\to V$, there is a unique morphism
$\free{f}:(U_X,\alpha_X)\to (V,\beta)$ of commutative \mapped semigroups such
that $ f=\free{f}\circ j_X.$ In other words, the following diagram commutes.
$$
 \xymatrix{
            X \ar[rr]^{j_X} \ar[drr]_{f} && U_X \ar[d]^{\free{f}} \\
                                         && V
            }
$$
Let $\cdot$ be the binary operation on the semigroup $U_X$, we also use the quadruple $(U_X, \cdot, \alpha_X, j_X)$ to denote the free commutative \mapped semigroup on $X$.

We similarly define the concepts of free commutative \mapped unitary and nonunitary $\bfk$-algebras.

\begin{theorem}
With the grafting operator (denoted by $\lc\ \rc$ in~\mcite{Guop}) $B^+:\calh_\calt\to \calh_\calt$,
\begin{enumerate}
\item
$\calf\backslash \{1\}$ is the free commutative \mapped semigroup on one generator.
\mlabel{it:treesg}
\item
$\oplus_{F\in \calf\backslash \{1\}} \CC\, F$ is the free commutative \mapped nonunitary algebra on one generator.
\mlabel{it:treealg}
\end{enumerate}
\mlabel{thm:treefree}
\end{theorem}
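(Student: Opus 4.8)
The plan is to exhibit the single-vertex tree $\onetree$ as the generator, the grafting operator $B^+$ as the structure operator, and forest multiplication (polynomial multiplication in $\calh_\calt$) as the semigroup product, and then to verify the universal property of Definition~\ref{de:mapset} by induction on the number of vertices. First I would record the \emph{unique decomposition} on which the whole argument rests: every nonempty forest $F\in\calf\setminus\{1\}$ is the unordered product $T_1\cdots T_m$ of its connected components, this multiset of trees being uniquely determined by $F$; and dually every tree $T$ equals $B^+(F')$ for the forest $F'$ obtained by deleting the root of $T$, with $F'$ nonempty exactly when $T$ has at least two vertices. In particular $\onetree$ is the unique element of $\calf\setminus\{1\}$ that is neither a product of two nonempty forests nor $B^+$ of a nonempty forest, which is precisely what singles it out as the sole generator; set $j(x)=\onetree$. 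Since $B^+$ sends nonempty forests to trees with $\ge 2$ vertices, $(\calf\setminus\{1\},\cdot,B^+)$ is a commutative \mapped semigroup, and the number of vertices gives a grading compatible with both operations, so the recursion below is well-founded.

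For existence, given a commutative \mapped semigroup $(V,\beta)$ and a map $f\colon\{x\}\to V$ with $v_0:=f(x)$, I would define $\bar f$ by induction on the number of vertices via three disjoint cases: $\bar f(\onetree)=v_0$; for a tree $T=B^+(F')$ with $F'$ nonempty, $\bar f(T)=\beta(\bar f(F'))$; and for a forest $F=T_1\cdots T_m$ with $m\ge 2$ components, $\bar f(F)=\bar f(T_1)\cdots\bar f(T_m)$, the product taken in $V$. The crucial point is well-definedness: because $V$ is commutative, the product $\bar f(T_1)\cdots\bar f(T_m)$ is independent of the order of the components, and likewise the value on $B^+(F')$ is independent of the order in which the subtrees descending from the root are listed. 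This is exactly where the non-planarity of the trees and the commutativity of $V$ must match up. Granting it, $\bar f$ is by construction a semigroup homomorphism, and writing a nonempty $F$ as $T_1\cdots T_k$ one gets $\bar f(B^+(F))=\beta(\bar f(T_1)\cdots\bar f(T_k))=\beta(\bar f(F))$, so $\bar f$ is a morphism of \mapped semigroups with $\bar f\circ j=f$.

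For uniqueness, let $g$ be any \mapped-semigroup morphism with $g(\onetree)=v_0$, and induct on the number of vertices using the same decomposition: on $\onetree$ it agrees with $\bar f$ by hypothesis; on a product it is forced by multiplicativity; and on a tree $T=B^+(F')$ it is forced by $g(B^+(F'))=\beta(g(F'))$ together with the induction hypothesis applied to the smaller forest $F'$. Hence $g=\bar f$, which proves part~(a).

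Part~(b) then follows by linearization. The multiplicative semigroup of a commutative \mapped nonunitary $\CC$-algebra $(V,\beta)$ is a commutative \mapped semigroup, so for any prescribed image $v_0\in V$ of the generator, part~(a) supplies a unique \mapped-semigroup morphism on the basis $\calf\setminus\{1\}$ sending $\onetree$ to $v_0$; extending it $\CC$-linearly yields a morphism out of $\oplus_{F\in\calf\setminus\{1\}}\CC\,F$ that is an algebra homomorphism (a linear map determined on a multiplicative basis) and that intertwines $B^+$ with $\beta$ (since both are linear and $B^+$ preserves the span of nonempty forests). Uniqueness is immediate, as any \mapped-algebra morphism restricts on the basis to a \mapped-semigroup morphism, which must be the one from part~(a). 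The main obstacle throughout is the well-definedness in the existence step of~(a)—checking that the recursion respects the identification of non-planar trees and of unordered forest products—rather than any of the homomorphism identities, which become formal once well-definedness is established.
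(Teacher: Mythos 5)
Your proof is correct and is essentially the argument the paper intends: the paper gives no details itself, saying only that ``the proof follows from the same argument as for the non-commutative case in~\mcite{Guop},'' and that argument is precisely the recursive construction you carry out (unique decomposition of a forest into its component trees and of a tree as $B^+$ of the forest below its root, inductive definition of $\bar f$ on these decompositions, then uniqueness by the same induction). The one point where the commutative case differs from the non-commutative one in~\mcite{Guop} --- your well-definedness check that unordered products of non-planar trees map coherently into a commutative target --- is exactly the adaptation the paper's citation implicitly requires, and you handle it correctly.
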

The proof follows from the same argument as for the non-commutative case in~\mcite{Guop}.

\subsection{Evaluating divergent integrals by renormalization}

Now for each rooted forest $F$, we define a formal integral $f_F(c), c>0,$ by the recursive structure of rooted forests.
First define the formal expression
$$ f_\onetree (c)= \int_0^\infty \frac{dx}{x+c}, c>0.$$
This allows us to define $f_F(c)$ for rooted forests of height 0 by multiplicity, namely if $F=\underbrace{\onetree \cdots \onetree}_{n-{\rm factors}}$, then

$$ f_F(c)= \prod_{i=1}^n \int_0^\infty \frac{dx_i}{x_i+c}
    = \left (\int_0^\infty \frac{dx}{x+c}\right )^n$$
Assume that $f_F(c)$ have been defined for all rooted forests $F$ with $0\leq \depth(F)\leq k$, and let $T$ be a rooted tree with $\depth(T)=k+1$. Then $T=B_+(\oF)$ for a rooted forest $\oF$ with $\depth(F)=k$. By the induction hypothesis, $f_\oF(c)$ is defined. We then define
$$ f_T(c)=\int_0^\infty f_\oF(x) \frac{dx}{x+c}.$$
Let $F$ be a rooted forest with $\depth(F)=k+1$.
Then we can uniquely write
$$ F=T_1 \cdots T_n$$
where $T_i, 1\leq i\leq n,$ are rooted trees with $\depth(T_i)\leq k+1$. Thus
$$ f_F(c)=\prod_{i=1}^n f_{T_i}(c)
$$
is defined.

Here are the formal expression corresponding to some rooted forests.
$$ f_{\onetree\, \onetree}(c)=\left (\int_0^\infty \frac{dx}{x+c}\right)^2, \quad
f_{\tb2}(c)=\int_0^\infty \left(\int_0^\infty \frac{dx_1}{x_1+x}\right) \frac{dx}{x+c} $$
$$
f_{\td31}(c)=\int_0^\infty \left (\int_0^\infty \frac{dx_1}{x_1+x}\int_0^\infty \frac{dx_2}{x_2+x}\right) \frac{dx}{x+c}$$

Thus we have the formal map
$$ \Phi: \calf \to \{ f_F\ |\ F\in \calf\}$$
sending $F\in \calf$ to the divergent integral $f_F$.
To get renormalized values of these integrals, we first build a Hopf algebra $H_\calt$ from $\calf$ as we did in Theorem~\mref{thm:tco}. We next construct the regularization $f_F(c;\vep)$ of the divergent integrals $f_F(c)$. We do this again recursively as we define $f_F(c)$, except replacing $du$ by $u^{-\vep} du$ for each variable $u$. Thus we define
$$ f_\onetree(c;\vep)=\int_0^\infty \frac{x^{-\vep}dx}{x+c}$$
and for a rooted tree $T=\lc \oF\rc$ with $\depth(F)=k+1$, recursively define
$$ f_T(c;\vep)=\int_0^\infty f_{\oF}(x;\vep) \frac{x^{-\vep} dx}{x+c}$$
By classical analysis, each $f_F(c;\vep)$ is convergent for $\vep\in \CC$ with $Re(\vep)>0$ and can be analytically continued to a convergent Laurent series, still denoted by $f_F(c;\vep)$, in $\CC[\vep^{-1},\vep]]$.
In fact, using the formula
$$\int_0^\infty \frac{x^{-k\vep}d x}{x+1} = \frac{\pi}{\sin (k \pi \vep)},$$
we obtain
$$f_\onetree(c;\vep)=\frac{\pi}{ c^{\vep}\sin(\pi \vep)}$$
and the recursive relation
$$ f_T(c;\vep)=f_{\oF}(c;\vep) \frac{\pi}{c^{\vep}\sin(|T|\pi \vep)}$$
where $|T|$ is the degree (i.e., the number of vertices) of $T$. For a fixed value of $c$, we now define an algebra homomorphism
\begin{equation}
\phi: H_\calt \to \CC[\vep^{-1},\vep]], \quad
\phi(F)=f_F(c;\vep), \quad F\in \calf.
\mlabel{eq:intreg}
\end{equation}
This homomorphism is also compatible with the operated algebra structure on $H_\calt$ given by the grafting and the operated algebra structure on the corresponding integrals given by the integral operator
sending $f(c;\vep)$ to $\int_0^\infty f(x;\vep)\frac{x^{-\vep}\,dx}{x+c}$.
Applying the \ABD, we obtain an algebra homomorphism
$$\phi_+: H_\calt \to \CC[[\vep]].$$
We then define the {\bf renormalized value} of the formal integral $f_F(c)$ to be
$$\bar{f}_F(c):=\lim_{\vep\to 0}\phi_+(F)(c;\vep).$$
For example,
$$f_\onetree(c;\vep)=\int_0^\infty \frac{x^{-\vep}\,dx}{x+c}
= \frac{1}{\vep}-\ln c + \left(\frac{\pi^2}{6}+ \frac{(\ln c)^2}{2}\right)\vep +o(\vep).$$
Hence the renormalized value of $f_\onetree(c)$ is
$\bar{f}_\onetree(c)=-\ln c.$
On the other hand, by Eq.~(\mref{eq:phi+}),
\begin{eqnarray*}
 \phi_+(\tb2)(c;\vep) &=& (\id -P)\big( \phi(\tb2)(c;\vep) - P(\phi(\ta1)(c;\vep))\phi(\ta1)(c;\vep)\big)
  \end{eqnarray*}
which is the power series part of
$\phi(\tb2)(c;\vep) - P(\phi(\ta1)(c;\vep))\phi(\ta1)(c;\vep)$.
Since
$$\phi(\tb2)(c;\vep)=
\frac{1}{2\vep^2}-\frac{\ln c}{\vep} + ((\ln c)^2 +\frac{5}{12}\pi^2) -\frac{\ln c}{6}(4(\ln c)^2+5\pi^2)\vep + o(\vep),$$
we obtain
$$ \bar{f}_{\tb2}(c)=\frac{1}{2}(\ln c)^2 + \frac{\pi^2}{4}.$$

\section{Renormalization of multiple zeta values}
\mlabel{sec:mzv}
Our next application of \ABD is to the study of multiple zeta values. We take the viewpoint in~\mcite{G-Z,G-Z2}. See~\mcite{M-P2,Zh2} for other approaches and applications.

\subsection{Multiple zeta values}
Multiple zeta values (MZVs) are defined to be the convergent sums
\begin{equation} \zeta(s_1,\cdots, s_k)=\sum_{n_1>\cdots>n_k>0}
    \frac{1}{n_1^{s_1}\cdots n_k^{s_k}}
\mlabel{eq:mzv}
\end{equation}
where $s_1,\cdots,s_k$ are positive integers with $s_1>1$. Since the papers of Hoffman~\mcite{Ho0} and Zagier~\mcite{Za} in the early 1990s, their study have attracted interests from several areas of mathematics and physics~\mcite{3BL,B-K,Ca1,Go3,G-M,Ho2}, including number theory, combinatorics, algebraic geometry and mathematical physics.

In order to study the multiple variable function
$\zeta(s_1,\cdots,s_k)$ at integers $s_1,\cdots,s_k$ where the
defining sum (\mref{eq:mzv}) is divergent, one first tries to use
the analytic continuation, as in the one variable case
of the Riemann zeta function. Such an analytic continuation was achieved in~\mcite{AET,Ma2,Zh2},
showing that $\zeta(s_1,\cdots,s_k)$ can be meromorphically continued to $\CC^k$
with singularities on the subvarieties
\begin{eqnarray*}
s_1&=&1;\\
s_1+s_2&=&2,1,0,-2,-4, \cdots; {\rm\ and\ } \mlabel{eq:pole}\\
\sum_{i=1}^j
s_{i} &\in& \ZZ_{\leq j}\ (3\leq j\leq k).
\end{eqnarray*}
Thus, unlike in the one variable case, the multiple zeta function in Eq.~(\mref{eq:mzv}) is still undefined at most non-positive integers even with the analytic continuation. 

\subsection{Quasi-shuffle Hopf algebra}
Let $M$ be a commutative semigroup. For each integer $k\geq 0$, let $\bfk M^k$ be the free $\bfk$-module with basis $M^k$, with the convention that $M^0=\{\bfone\}$. Let
\begin{equation}
 \calh_M=\bigoplus_{k=0}^\infty \bfk\, M^k.
 \mlabel{eq:qprod}
\end{equation}
Following~\mcite{Ho2}, define the {\bf quasi-shuffle product} $\msh$ by
first taking $\bfone$ to be the multiplication identity.
Next for any $m,n\geq 1$ and
$\vec a:=(a_1,\cdots , a_m)\in M^{m}$ and $\vec b:=(b_1, \cdots, b_n)\in M^{n}$, denote
$\vec a\,'=(a_2,\cdots,a_m)$ and $\vec b\,'=(b_2,\cdots,b_n)$. Recursively define
\begin{equation}
\vec{a} \msh \vec{b} =\big(a_1,   \vec{a}\,'\msh \vec b\big )
  + \big(b_1, \vec a \msh \vec{b}\,' \big) \\
  + \big(a_1 b_1,   \vec{a}\,' \msh \vec{b}\,'\big)
\mlabel{eq:qshuf}
\end{equation}
with the convention that $\vec{a}\,'=\bfone$ if $m=1$, $\vec{b}\,'=\bfone$ if $n=1$ and
$(a_1b_1,\vec{a}\,'\msh \vec{b}\,')=(a_1b_1)$ if $m=n=1$.
(Without the third term on the right, Eq.~(\mref{eq:qshuf}) gives the recursive definition of shuffle product.)
For example,
\begin{eqnarray*}
\lefteqn{ (a_1,a_2)\msh (b_1,b_2)= (a_1,(a_2)\msh (b_1,b_2))
 +(b_1,(a_1,a_2)\msh (b_2))+(a_1b_1, (a_2)\msh (b_2))}\\
 &=& \big(a_1,\big((a_2, \bfone \msh (b_1,b_2))+(b_1,(a_2)\msh (b_2))+(a_2b_1,\bfone \msh (b_2))\big)\big)\\
 &&+\big(b_1,\big((a_1,(a_2)\msh (b_2))+(b_2, (a_1,a_2)\msh \bfone)+(a_1b_2, (a_2)\msh \bfone)\big)\big) \\
 && \big(a_1b_1, \big( (a_2, \bfone\msh (b_2))+(b_2, (a_2)\msh \bfone) + (a_2b_2)\big)\big)\\
&=& (a_1,a_2,b_1,b_2)+(a_1,b_1,a_2,b_2)+(a_1,b_1,b_2,a_2) +(a_1,b_1,a_2b_2)+(a_1,a_2b_1,b_2) \\
&& + (b_1,a_1,a_2,b_2)+(b_1,a_1,b_2,a_2)+(b_1,a_2,a_2b_2) + (b_1,b_2,a_1,a_2)+(b_1,a_1b_2,a_2)\\
&& +(a_1b_1,a_2,b_2)+(a_1b_1,b_2,a_2)+(a_1b_1,a_2b_2).
 \end{eqnarray*}
Alternatively~\mcite{G-K1,G-K2}, $\vec{a}\msh \vec{b}$ is the sum of {\bf mixable shuffles} of $\vec{a}$ and $\vec{b}$ consisting of the shuffles of $\vec{a}$ and $\vec{b}$ (see \S\,\mref{sss:exam}) and the {\bf mixed shuffles} by merging some of $(a_i,b_j)$ in a shuffle to $a_ib_j$. In the above example, we have
\begin{eqnarray*}
\lefteqn{ (a_1,a_2)\msh (b_1,b_2)=
(a_1,a_2,b_1,b_2)+(a_1,b_1,a_2,b_2)} \\
&& +(a_1,b_1,b_2,a_2)
+ (b_1,a_1,a_2,b_2)+(b_1,a_1,b_2,a_2)+ (b_1,b_2,a_1,a_2)
\quad {\rm (shuffles)} \\
&&+(a_1,b_1,a_2b_2)+(a_1,a_2b_1,b_2) +(b_1,a_2,a_2b_2) +(b_1,a_1b_2,a_2)\\
&& +(a_1b_1,a_2,b_2)+(a_1b_1,b_2,a_2)+(a_1b_1,a_2b_2)
\quad {\rm (mixed\ shuffles)}.
 \end{eqnarray*}

There are many interpretations of the quasi-shuffle product. It is also known as harmonic product~\mcite{Ho1} and coincides with the stuffle product~\mcite{3BL,Br}
in the study of MZVs. Variations of the stuffle product have also appeared in ~\mcite{Ca,Eh}.
Mixable shuffles are also called overlapping shuffles~\mcite{Ha} and generalized shuffles~\mcite{Go3}, and can be interpreted in terms of Delannoy paths~\mcite{A-H,Fa,Lo}.

By the same proofs as \cite[Theorem 2.1]{Ho2} and \cite[Theorem 3.1]{Ho2} we see that $\calh_M$ is
a bialgebra. In~\cite{Ho2} $M$ has the extra condition of being a locally finite set to ensure that $\calh_M$ is a graded Hopf algebra, not just a filtered Hopf algebra.
By the definition of $\msh$ and $\Delta$, $\calh_M$ is connected filtered cograded with the submodules $\bfk\,M^n, n\geq 0$. Then $\calh_M$ is automatically a Hopf algebra by Theorem~\mref{thm:pt}. Thus we have

\begin{theorem}
Let $M$ be a commutative semigroup.
Equip $\calh_M$ with
the submodules $\calh_M^{(n)}=\oplus_{i=0}^n \bfk\, M^i$,
The quasi-shuffle product $\msh$,
the deconcatenation coproduct
\begin{eqnarray}
&&\Delta: \calh_M \to \calh_M \barot \calh_M,\\
 \Delta( a_1, \cdots ,  a_k)&=&1\barot (a_1,  \cdots ,  a_k)
+ \sum_{i=1}^{k-1} (a_1,  \cdots,  a_i)\barot (a_{i+1},  \cdots ,  a_k) \notag\\
&&+ (a_1,  \cdots ,  a_k) \barot 1
\mlabel{eq:coprod}
\end{eqnarray}
and the projection counit
$\vep: \calh_M \to \bfk$
onto the direct summand $\bfk \subseteq \calh_M$.
Then $\calh_M$ is a commutative connected filtered Hopf algebra.
\mlabel{thm:hopf}
\end{theorem}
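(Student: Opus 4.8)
The plan is to verify that $\calh_M$, equipped with the cograding submodules $H_n:=\bfk\,M^n$, is a connected filtered cograded bialgebra, and then to invoke Theorem~\mref{thm:pt} to obtain the Hopf algebra structure for free. This route avoids any explicit construction of the antipode: once the bialgebra and the connected filtered cograded conditions are in hand, Theorem~\mref{thm:pt} supplies the antipode automatically. The announced filtration $\calh_M^{(n)}=\oplus_{i=0}^n H_i$ is then the one induced by this cograding.

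First I would establish the bialgebra axioms. Commutativity of $\msh$ is immediate from the symmetry of the recursion in Eq.~(\mref{eq:qshuf}) under interchanging $\vec a$ and $\vec b$, by a one-line induction on $m+n$. Associativity of $\msh$ is the first genuine point: I would induct on the total length $m+n+p$ of three words, expanding each of $(\vec a\msh \vec b)\msh \vec c$ and $\vec a\msh(\vec b\msh \vec c)$ once by Eq.~(\mref{eq:qshuf}) and matching the resulting terms, which is exactly the content of Hoffman's Theorem~2.1 in~\mcite{Ho2}. Coassociativity of the deconcatenation coproduct $\Delta$ in Eq.~(\mref{eq:coprod}) is routine: both $(\Delta\ot \id)\Delta$ and $(\id\ot \Delta)\Delta$ applied to $(a_1,\dots,a_k)$ produce the sum over all $0\le i\le j\le k$ of the three-fold splitting $(a_1,\dots,a_i)\ot(a_{i+1},\dots,a_j)\ot(a_{j+1},\dots,a_k)$. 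The counit axiom follows since the only term of $\Delta$ surviving the projection $\vep\ot \id$ (resp.\ $\id\ot \vep$) is $1\barot(a_1,\dots,a_k)$ (resp.\ $(a_1,\dots,a_k)\barot 1$), every other left (resp.\ right) factor having length $\ge 1$.

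The main obstacle is the Hopf compatibility, namely that $\Delta$ is an algebra homomorphism for $\msh$, i.e.\ $\Delta(\vec a\msh \vec b)=\Delta(\vec a)\msh \Delta(\vec b)$ in the tensor-product algebra $\calh_M\barot \calh_M$. I would prove this by induction on $m+n$, applying Eq.~(\mref{eq:qshuf}) to the left side and the recursive form of deconcatenation to the right. The delicate bookkeeping is that the three-term split of $\msh$ (the two shuffle terms and the single merge term $a_1b_1$) must interleave correctly with every position at which a deconcatenation cut can fall, including the cut that separates the two merged letters forming $a_1b_1$. This is the commutative analogue of Hoffman's Theorem~3.1 in~\mcite{Ho2}, and multiplicativity of $\vep$ is then a trivial special case.

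Once $\calh_M$ is a bialgebra, the connected filtered cograded structure is immediate with $H_n=\bfk\,M^n$. A quasi-shuffle of a length-$p$ word and a length-$q$ word is a sum of words whose lengths range between $\max(p,q)$ and $p+q$ (each merge lowering the length by one), so $H_pH_q\subseteq \sum_{k\le p+q} H_k$; the deconcatenation coproduct splits a length-$n$ word into pieces of complementary lengths, so $\Delta(H_n)\subseteq \oplus_{p+q=n} H_p\ot H_q$; and $H_0=\bfk\,\bfone=\im\,\un$. Applying Theorem~\mref{thm:pt} then yields that $\calh_M$ is a Hopf algebra. Commutativity has already been recorded, and the induced filtration $\calh_M^{(n)}=\oplus_{i=0}^n H_i$ satisfies $\calh_M^{(p)}\calh_M^{(q)}\subseteq \calh_M^{(p+q)}$ and $\Delta(\calh_M^{(n)})\subseteq \oplus_{p+q=n}\calh_M^{(p)}\ot \calh_M^{(q)}$, so $\calh_M$ is a commutative connected filtered Hopf algebra, as claimed.
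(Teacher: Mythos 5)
Your proposal is correct and follows essentially the same route as the paper: the paper likewise obtains the bialgebra structure by the arguments of Hoffman's Theorems 2.1 and 3.1 in~\mcite{Ho2}, notes that $\calh_M$ is connected filtered cograded with the submodules $\bfk\,M^n$, and then invokes Theorem~\mref{thm:pt} to get the antipode automatically. Your write-up simply fills in more of the details (the induction for associativity, the compatibility of $\Delta$ with $\msh$, and the length bookkeeping for the cograding) that the paper leaves to the citation.
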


\subsection{The Hopf algebra of directional regularized multiple zeta values}
We consider the commutative semigroup
\begin{equation}
\frakM= \{{{\wvec{s}{r}}}\ \big|\ (s,r)\in \ZZ \times \RR_{>0}\}
\mlabel{eq:mbase}
\end{equation}
with the multiplication
$ {\wvec{s}{r}} {\wvec{s'}{r'}}={\wvec{s+s'}{r+r'}}.$
%
By Theorem~\mref{thm:hopf},
$$\calh_{\frakM}:=\sum_{k\geq 0} \CC\, \frakM^k,$$
with the quasi-shuffle product $\msh$ and the deconcatenation coproduct $\Delta$, is a
connected filtered Hopf algebra.
For $w_i=\wvec{s_i}{r_i}\in \frakM,\ i=1,\cdots,k$, we use the notations
$$ \vec{w}=(w_1,\cdots,w_k)
=\wvec{s_1,\cdots,s_n}{r_1,\cdots,r_k}=\wvec{\vec{s}}{\vec{r}},\
{\rm where\ } \vec{s}=(s_1,\cdots,s_k), \vec{r}=(r_1,\cdots,r_k).$$
For
$\vep\in \CC$ with ${\rm
Re}(\vep)<0$, define the {\bf directional regularized MZV}:
\begin{equation}
Z(\wvec{\vec{s}}{\vec{r}};\vep)=\sum_{n_1>\cdots>n_k>0}
\frac{e^{n_1\,r_1\vep} \cdots
    e^{n_k\,r_k\vep}}{n_1^{s_1}\cdots n_k^{s_k}}
\mlabel{eq:reggmzv}
\end{equation}
It converges for any $\wvec{\vec{s}}{\vec{r}}$ and is regarded as the regularization of the {\bf formal MZV}
\begin{equation}
\zeta (\vec{s})= \sum_{n_1>\cdots>n_k>0} \frac{1}{n_1^{s_1} \cdots
    n_k^{s_k}}
\mlabel{eq:formgmzv}
\end{equation}
which converges only when $s_i>0$ and $s_1>1$.
It is related to the multiple polylogarithm
$${\rm Li}_{s_1,\cdots,s_k}(z_1,\cdots,z_k)=\sum_{n_1>\cdots n_k>0}
    \frac{z_1^{n_1} \cdots z_k^{n_k}}{n_1^{s_1}\cdots n_k^{s_k}}$$
by a change of variables $z_i=e^{r_i\vep}, 1\leq i\leq k$.
As is well-known~\mcite{3BL,Go3}, the product of multiple polylogarithms as functions satisfies the quasi-shuffle (stuffle) relation of the nested sums. Therefore the product of regularized MZVs as functions also satisfies the quasi-shuffle relation: if
 $ \wvec{\vec{s}}{\vec{r}}\msh \wvec{\vec{s}\,'}{\vec{r}\,'}
=\sum \wvec{\vec{s}\,''}{\vec{r}\,''}$, then
\begin{equation}
Z(\wvec{\vec{s}}{\vec{r}};\vep)Z(\wvec{\vec{s}\,'}{\vec{r}\,'};\vep)
= Z(\wvec{\vec{s}}{\vec{r}}\msh \wvec{\vec{s}\,'}{\vec{r}\,'};\vep)
:= \sum Z(\wvec{\vec{s}\,''}{\vec{r}\,''};\vep).
\mlabel{eq:qsh1}
\end{equation}
 We thus obtained an algebra homomorphism
\begin{equation}
\phi_\mzv: \calh_\frakM \to \sum_{{\wvec{\vec{s}}{\vec{r}}\in \cup_{n\geq 0} \frakM^n}}\
    \CC\, Z(\wvec{\vec{s}}{\vec{r}};\vep),
    \wvec{\vec{s}}{\vec{r}} \mapsto Z(\wvec{\vec{s}}{\vec{r}};\vep).
\mlabel{eq:regz}
\end{equation}
With this map, $\calh_\frakM$ is a parametrization of the directional regularized MZVs that also reflects their multiplication property.

\subsection{Renormalization of multiple zeta values and examples}

It is shown in~\mcite{G-Z} (see also~\mcite{G-Z2,M-P2,Zh2}) that $Z(\wvec{\vec{s}}{\vec{r}};\vep)$ has Laurent series expansion in the algebra $\CC[T][\vep^{-1},\vep]]$ where $T$ is a variable representing $\ln(-\vep)$ coming from the pole at $\zeta(1)$. By \ABD in Eq.~(\mref{eq:decom}) with $K=\CC[T]$, we obtain an algebra homomorphism
$$\phi_{\mzv,+}: \calf_\frakM \to \CC[T][[\vep]].$$
Then we define the renormalized (directional) multiple zeta value of $\zeta(\vec{s})$ to be
$\gzeta\big(\wvec{\vec{s}}{\vec{r}}\big).$

To finish the paper we illustrate this method by some special cases when $\vec{s}$ has only non-positive components. In this case, $T$ does not occur in the Laurent series expansions of $Z(\wvec{\vec{s}}{\vec{r}};\vep)$. See~\mcite{G-Z,G-Z2} for further details. First recall the following generating series of Bernoulli numbers that goes back to Euler.
\begin{equation} \frac{\vep}{e^\vep-1}=\sum_{k\geq 0} B_k \frac{\vep^k}{k!}
\mlabel{eq:bern}
\end{equation}
It can be easily rewritten as
\begin{equation}
 \frac{e^\vep}{1-e^\vep}= -\frac{1}{\vep}\frac {-\vep}{e^{-\vep}-1}
    = -\frac{1}{\vep}+ \sum_{k\geq 0}\zeta(-k) \frac{\vep^{k}}{k!}
    \mlabel{eq:zeta}
\end{equation}
since $B_0=1$ and $\zeta(-k)=(-1)^k\frac{B_{k+1}}{k+1}$ for $k\geq
0$.

Now consider
$$Z(s;\vep)\big(=Z(\wvec{s}{1};\vep)\big)=\sum_{n\geq 1} \frac{e^{n\vep}}{n^s}, $$ regarded as a deformation or ``regularization" of the series defining the Riemann zeta function
$\zeta(s)=\sum_{n\geq 1} \frac{1}{n^s}$. The regularized series converges for any integer $s$ when ${\rm Re}(\vep)<0$. In particular,
$$Z(0;\vep)=\frac{e^\vep}{1-e^\vep}$$
and Eq.~(\mref{eq:zeta}) gives
the Laurent series expansion of the regularized sum $Z(0;\vep)=\sum_{n\geq 1}
e^{n\vep}$ at $\vep=0$. Then we have, for $\tilde{Q}=\id -Q$,
$$ \tilde{Q} \big(\sum_{n\geq 1}e^{n\vep}\big)\Big|_{\vep=0} =\zeta(0).$$
So the renormalized value of $Z(0;\vep)=\sum_{n\geq 1} e^{n\vep}$ is $\zeta(0)$. Similarly, to evaluate $\zeta(-k)$ for an integer $k\geq 1$, consider the regularized sum
$$ Z(-k;\vep)=\sum_{n\geq 1}{n^k} {e^{n\vep}}
= \frac{d^k}{d\vep} \big(\frac{e^\vep}{1-e^\vep}\big) $$ which
converges uniformly on any compact subset in ${\rm
Re}(\vep)<0$. So its Laurent series expansion at $\vep=0$ is obtained
by termwise differentiation of Eq.~(\mref{eq:zeta}), yielding
\begin{equation}
Z(-k;\vep)=(-1)^{-k-1}(k)!\,\vep^{-k-1}+\sum _{j=0}^{\infty} \zeta
(-k-j)\frac {\vep^j}{j!}. \mlabel{eq:zreg}
\end{equation}
We then have
$$ \tilde{Q}\big(\sum_{n\geq 1}{n^k} {e^{n\vep}}\big)\Big|_{\vep=0}
    = \zeta (-k).$$

Thus the renormalization method does give the correct
Riemann zeta values at non-positive integers. We next extend this to multiple zeta functions and ``evaluate" $\zeta(0,0)$, for example, by consider the regularized sum
$$ Z(0,0;\vep)\big(=Z(\wvec{{0,0}}{{1,1}};\vep)\big)=\sum_{n_1>n_2>0} e^{n_1\vep}e^{n_2\vep} =\frac{e^\vep}{1-e^\vep}\frac{e^{2\vep}}{1-e^{2\vep}}.$$

By the renormalized process (\mref{eq:phi+}) adopt to our case we found that the renormalized value is defined by
\begin{eqnarray*}
&& \tilde{Q}\Big(\sum_{n_1>n_2>0} e^{n_1\vep}e^{n_2\vep}-\sum_{n_2>0} e^{n_2\vep} \big(
\underbrace{\sum_{n_1>0} e^{n_1 \vep}-{\rm\, \tilde{Q}}(\sum_{n_1>0} e^{n_1 \vep})}_{\mbox{subdivergence}}\big)\Big)\Big|_{\vep=0}\\
&=& \tilde{Q}\Big( \frac{1}{2}\frac{1}{\vep^2}-\frac{3}{2}\zeta(0)\frac{1}{\vep} +(-\frac{5}{2}\zeta(-1)+\zeta(0)^2+o(\vep) ) -\big(\frac{1}{\vep^2}-\frac{\zeta(0)}{\vep}-\zeta(-1)+o(\vep) \big) \Big)\Big|_{\vep=0}\\
&=& -\frac{3}{2}\zeta(-1)+\zeta(0)^2=\frac{3}{8}.
\end{eqnarray*}
This value indeed satisfies the
quasi-shuffle relation $ \zeta(0) \zeta(0)= 2\zeta(0,0)+\zeta(0)$.

Note that naively taking the finite part as in the one variable case gives
$$\tilde{Q}\big(\sum_{n_1>n_2>0} e^{n_1\vep}e^{n_2\vep}\big)\big|_{\vep=0}
=\tilde{Q}\big(\frac{1}{2\vep^2}-\frac{3}{2}\zeta(0)\frac{1}{\vep}+
\big(-\frac{5}{2}\zeta(-1)+\zeta(0)^2\big)+o(\vep)\big)\big|_{\vep=0}
=11/24.$$
This
value does not satisfy the quasi-shuffle
(stuffle) relation:
$$ \zeta(0) \zeta(0)\neq 2\, \zeta(0,0)+\zeta(0)$$
since the left hand side is $1/4$ and the right hand side is $5/12$.
This contradicts the well-known quasi-shuffle relation
$$ \zeta(s)\zeta(s)=2\, \zeta(s,s)+\zeta(2s)$$
for any integer $s\geq 2$.
See~\mcite{G-Z,G-Z2,M-P2,Zh2} for more systematic presentations and further progress.


%
%

\end{document}